\documentclass[11pt]{article}
\usepackage{amsmath}
\usepackage{amssymb}
\usepackage{amsthm}
\usepackage{enumerate}
\usepackage{epsfig}
\usepackage{dsfont}
\usepackage{color}
\usepackage{graphicx}
\usepackage[a4paper]{geometry}
\geometry{hscale=.8,vscale=0.8,centering}

\usepackage[utf8]{inputenc}

\newcommand{\N}{\ensuremath{\mathbb{N}}}
 
\newcommand{\Op}{\ensuremath{\mathcal{L}}}
\newcommand{\Hyp}{\ensuremath{\mathbb{H}}}
\newcommand{\R}{\ensuremath{\mathbb{R}}}
\newcommand{\Prob}{\ensuremath{\mathbb{P}}}

\newcommand{\E}{\ensuremath{\mathbb{E}}}

\newcommand{\AdS}{\mathrm{AdS}^{d+1}}
\newcommand{\PR}{\mathrm{P}^{d+1} \mathbb{R}}
\newcommand{\Ein}{\mathrm{Ein}^{d}}
\newcommand{\dS}{\mathrm{dS}^{d+1}}

\newtheorem{lemm}{Lemma}
\newtheorem{theo}{Theorem}
\newtheorem{prop}{Proposition}
\newtheorem{coro}{Corollary}
\newtheorem*{hypo}{The d\'evissage conditions}
\newtheorem{rema}{Remark}

\author{J\"urgen Angst\footnote{Univ Rennes, CNRS, IRMAR - UMR 6625, F-35000 Rennes, France, jurgen.angst@univ-rennes1.fr} \; and Camille Tardif\footnote{Sorbonne Universit\'e - LPSM, UMR 8001, 75252 Paris CEDEX 05, camille.tardif@sorbonne-universite.fr}}

\begin{document}

\title{On the Poisson boundary of the relativistic Brownian motion}
\maketitle

\abstract{
In this paper, we determine the Poisson boundary of the relativistic Brownian motion in two classes of Lorentzian manifolds, namely model manifolds of constant scalar curvature and Robertson--Walker space-times, the latter constituting a large family of curved manifolds. Our objective is two fold: on the one hand, to understand the interplay between the geometry at infinity of these manifolds and the asymptotics of random sample paths, in particular to compare the stochastic compactification given by the Poisson boundary to classical purely geometric compactifications such as the conformal or causal boundaries.  On the other hand, we want to illustrate the power of the d\'evissage method introduced by the authors in \cite{devissage}, method which we show to be particularly well suited in the geometric contexts under consideration here. 
}

\par
\vspace{1cm}
\setcounter{tocdepth}{3}
\tableofcontents

\newpage

\section{Introduction and setting}

\subsection{Long time asymptotics of relativistic Markov processes}

\label{sec.intro}
The study of stochastic processes in the context of Lorentzian geometry has both physical and mathematical strong motivations. One the one hand, from a physical perspective, Lorentzian diffusions are very adequate models to study random motions or fluid dynamics in the framework of Einstein's special or General Relativity theory, see for example \cite{dmr,dunkel,deb} and the references  therein. On the other hand, from a more mathematical perspective, considering the importance of the heat kernel as a tool in Riemannian geometry, it appears very natural to investigate the links between local/global geometry and the asymptotics of random paths in a Lorentzian setting. 
\par 
\medskip
Among the natural questions concerning the interplay between randomness and geometry, we are here particularly interested in understanding how the long time asymptotic behavior of random processes reflects the geometry at infinity of the underlying manifold on which they are defined. The study of the asymptotic geometry of a manifold identifies with the one of its geometric compactifications. In the Lorentzian framework,
due to their importance in physics and in particular in cosmology, such compactifications of space-times have been introduced and intensively studied by both physicists and geometers. The most popular constructions in this context are the conformal and causal boundaries, see e.g. \cite{hawell} and the references therein.
\par 
\medskip
From both geometric and probabilistic points of view, of primer interest are the so-called relativistic processes, that is to say random processes whose law is covariant under the action of local isometries of the manifold. In the case where the latter has constant curvature, relativistic Markov processes can be entirely classified. They are the projections of invariant L\'evy processes with values in the isometry group of the manifold, see \cite{camille}. If the base manifold has non-constant curvature, jumps of the sample paths are difficult to handle and one usually restricts the study to the one of continuous Markov processes, i.e. diffusion processes.
Following Dudley's seminal work \cite{dudley1,dudley2} in Minkowski space-time, Franchi and Le Jan constructed in \cite{flj}, on the future-directed half of the unitary tangent bundle of an arbitrary Lorentz manifold, a diffusion process which is Lorentz-covariant. This process, that we will simply call \textit{relativistic Brownian motion} or \textit{relativistic diffusion} in the sequel, and whose precise definition is recalled in Section \ref{sec.RWdiffusion} below, is the Lorentzian analogue of the classical Brownian motion on a Riemannian manifold. It can be seen either as a random perturbation of the timelike geodesic flow on the unitary tangent bundle, or as a stochastic development of Dudley's diffusion in a fixed tangent space. Variants and generalizations of this process can be found in \cite{flj2,debbaschRD,ismaelRD}. 
\par
\medskip
The study of the articulation between the asymptotics of relativistic processes and the geometry at infinity of the underlying manifold can be performed by following the general following scheme. A first task consists in expliciting the almost sure long time asymptotics of the sample paths of the process. Beyond this almost sure asymptotics, the long time behavior of the process is then fully encoded in its Poisson boundary, whose definition is also recalled in Section \ref{sec.RWdiffusion} below, and which captures all the probabilistic ``information" at infinity. Having identified the Poisson boundary, one can then compare its support to the purely geometric boundaries, see which one carries more information etc. Let us briefly recall here that the determination of the Poisson boundary can also be rephrased in the language of harmonic analysis since it identifies with the set of bounded harmonic functions for the infinitesimal generator of the process. 
\par 
\medskip
In a general setting, the explicit determination of the Poisson boundary of a Markov process is a highly non-trivial task. Indeed, as soon as the underlying state space is not a semi-simple Lie group or an homogeneous space, standard Lie groups methods such as the ones developped in \cite{harry1,harry,liao,babillot}, techniques such as explicit Doob $h-$tranforms \cite{pinsky},  explicit couplings or shift-couplings \cite{cranston} are hardly implementable. Recently, the authors introduced in \cite{devissage} the so-called d\'evissage method which allows to overcome this difficulty, at least in the case where the state space has enough symmetries and the dynamics of the considered process respects these symmetries. 
\newpage
Following the general scheme described above, a detailed study of the long time almost sure asymptotics of the relativistic diffusion has been performed in a certain number of examples of Lorentzian manifolds: Minkowski space-time \cite{dudley3}, Schwartzschild space-time \cite{flj}, G\"odel space-time \cite{franchigoedel}, de Sitter and Anti de Sitter space-times in \cite{camille} and in a large class of curved, warped product space-times in \cite{mathese, angst2016}. 
Nevertheless, the full determination of the Poisson boundary of a relativistic Markov process has only  been performed in the continuous context, namely for the relativistic diffusion, and only in Minkowski space-time in \cite{ismael,devissage} and in a very particular case of Robertson--Walker space-time in \cite{angst1}. In these two examples, it was shown that the Poisson boundary of the relativistic diffusion actually identifies with the geometric conformal and causal boundaries cited above, so that it is quite natural to ask if it is always the case.
\par 
\medskip
In this work, thanks to an extensive use of the d\'evissage method, we explicitly determine the Poisson boundary of the relativistic diffusion in a large class of Lorentzian manifolds. This class is composed of all model space-times of constant curvature namely Minkowski, de Sitter and Anti de Sitter space-times, as well as all expanding  Robertson--Walker space-times studied in \cite{angst2016}. Doing so, we are able to explicitly relate the geometry at infinity of the underlying manifold via its conformal/causal boundary to the sample paths asymptotics. In particular, we show that the Poisson boundary of the relativistic diffusion indeed identifies with the conformal boundary in the case of model space-times of non-negative curvature, but that it is no more the case in Anti de Sitter space-time or for curved space-times. In the studied cases, our results confirm a conjecture by Franchi and Le Jan asserting that the Poisson boundary of the relativistic diffusion identifies with equivalence classes of light rays.
\par
\medskip 
Let us emphasize that in each geometric case considered in the article, the exact determination of the Poisson boundary is obtained via the d\'evissage method. Nevertheless, our results cover a large variety of geometric behaviors, each case demanding a specific treatment and the adaptation of the d\'evissage scheme. The fact that the same method allows to conclude in such a variety of situations illustrates its power and flexibility. 
\par
\medskip 
The plan of the paper is the following: in the next subsection, we briefly introduce the geometric and probabilistic settings of our study. In Section \ref{sec.statement}, we then state our main results, which consist in the exact determination of the Poisson boundary of the relativistic diffusion in all the manifolds under consideration. Finally, Section \ref{sec.proof} is devoted to the proofs of the stated results.

\subsection{Geometric and probabilistic settings}

In order to state our results in the next section, we quickly introduce here the geometric and probabilistic frameworks of our study.

\subsubsection{Lorentzian model manifolds}\label{sec.background}

The global geometric framework of our study is the one of Lorentzian manifolds, that is, finite dimension differentiable manifolds $\mathcal M$, endowed with a pseudo-metric $g$ of signature $(-, +, \ldots, +)$.
Due to the non-positivity of the metric, given a point $\xi \in \mathcal M$, a tangent vector $\dot{\xi} \in T_{\xi} \mathcal M$ to such a manifold can be of three different types, namely $\dot{\xi}$ is said to be time-like if $g_{\xi}(\dot{\xi}, \dot{\xi}) <0$, space-like if $g_{\xi}(\dot{\xi}, \dot{\xi}) >0$ and light-like if $g_{\xi}(\dot{\xi}, \dot{\xi})=0$. In the same way, a smooth curve $(\xi_t)_{t \in I}$ on $\mathcal M$ is said to be time-like (resp. space/light-like) if for each $t \in I$, the tangent vector $\dot{\xi}_t = d \xi_t/dt \in T_{\xi_t} \mathcal M$ is time-like (resp. space/light-like). A time-like curve with values in $\mathcal M$ can always be parametrized by its arc-length or proper time $s$, so that $g_{\xi_s}(\dot{\xi}_s, \dot{\xi}_s) =-1$. The unitary tangent bundle associated to time-like tangent vectors of pseudo-norm $-1$ will be denoted by $T^1 \mathcal M$, and if a chronological orientation if given on $\mathcal M$, then $T^1_+ \mathcal M$ will denote its positive part consisting of future oriented vectors.
\newpage
As in the Riemannian setting, a Levi--Civita connection can be associated to the pseudo-metric $g$ which allows to define a Riemann curvature tensor, and thus a scalar curvature after taking the trace. 
Of primer interest are then the Lorentzian manifolds of constant scalar curvature, or model space-times, which are the analogues of the Euclidean space $\mathbb R^d$, the Euclidean sphere $\mathbb S^d$ and the hyperbolic space $\mathbb H^d$ in the Riemannian context. Let $Q_{p,q}$ denote the canonical quadradic form of signature $(p,q)$ on $\mathbb R^{p+q}$, namely for $x=(x_1,\ldots, x_{p+q}) \in \mathbb R^{p+q}$
\[
Q_{p,q}(x,x) := - \sum_{k=1}^p |x_k|^2 + \sum_{\ell=p+1}^q |x_{\ell}|^2.
\]
The Lorentzian manifold of dimension $d+1$ with constant zero scalar curvature is the Minkowski space-time $\mathbb R^{1,d}$ which is simply defined as $\mathbb R^{d+1}$, endowed with pseudo-metric $Q_{1,d}$. The analogue of the Euclidean sphere, i.e. the Lorentzian manifold of constant scalar curvature equal to one is the de Sitter space-time $\mathrm{d S}^{d+1} $ which is defined as 
\[
\mathrm{d S}^{d+1}:= \left \lbrace  x \in \mathbb R^{d+2}, \; Q_{1,d+1}(x,x)= 1\right \rbrace,
\] 
endowed with the metric $Q_{1,d+1}$ inherited of the ambiant space. Finally, the Lorentzian manifold of constant scalar curvature equal to $-1$  is the Anti de Sitter space-time $\mathrm{A d S}^{d+1} $ defined as 
\[
\mathrm{ A d S}^{d+1}:= \left \lbrace x \in \mathbb R^{d+2}, \; Q_{2,d}(x,x) = -1\right \rbrace,
\] 
also endowed with pseudo-metric $Q_{2,d}$ inherited of the ambiant space.
As already mentionned, we will be mostly interested here in relating the long time asymptotic behavior of the relativistic Brownian motion in a space-time $\mathcal M$ to the geometry at infinity of the latter. The three model space-times $\mathcal M=\mathbb R^{1,d}, \mathrm{d S}^{d+1}$ and $\mathrm{A d S}^{d+1} $ are all conformally flat and each admits a natural compactification as a subset of Einstein static universe $\mathbb R \times \mathbb S^d$. This compactification, introduced by Penrose in \cite{penrose}, is called the conformal boundary of the manifold. In the case of $\R^{1,d}$ and $\dS$ this conformal boundary coincides with the causal boundary and is topologically a cone for $\R^{1,d}$ and a sphere for $\dS$. The manifold $\AdS$ is not causal since its contains closed  time-like geodesic and thus, it does not admit a causal boundary. Nevertheless it has a well defined conformal boundary, which is topologically a torus $\mathbb{S}^1 \times \mathbb{S}^{d-1}$, identified with the Einstein flat conformal Lorentz manifold of dimension $d$, denoted by $\text{Ein}^d$ in the sequel. For a detailed description of Lorentzian model space-times and their conformal boundary, we refer to \cite{frances2,frances} and the references therein.

%
\subsubsection{Robertson Walker space-times} \label{sec.geoRW}
The second type of Lorentzian manifolds we will consider in this article is the one of Robertson--Walker space-times. They are among the simplest examples of curved space-times, yet their geometry is rich and flexible enough to have a good idea of the interplay between the random paths asymptotics and the manifold on which they live. A Robertson--Walker space-time $\mathcal M:=I \times_{\alpha} M$ is defined as a Cartesian product of a open interval $(I,-dt^2)$ (the base) and a Riemannian manifold $(M, h)$ of constant curvature (the fiber), endowed with a Lorentz metric of the following form $g := -dt^2 + \alpha^2(t)h,$ where $\alpha$ is a positive $C^2$ function on $I$, called the \textit{expansion function} or \textit{torsion function}. A general study on the geometry of warped product manifolds can be found in \cite{ghani1}. More specific results on the geometry of Robertson--Walker space-times and their geodesics can be found in \cite{floressanchez}. 
Since a Riemannian manifold of constant curvature is isometric to the Euclidean space, its sphere of the hyperbolic space, without loss of generality, we can restrict ourselves to the cases where $M= \mathbb R^d, \mathbb S^d$ or $\mathbb H^d$. We will assume here that the ``time interval'' is infinite i.e. $I=(0, +\infty)$, that the torsion function $\alpha(t)$ goes to $+\infty$ as $t$ goes to infinity, and that it satisfies the natural set of Hypotheses 1 and 2 of  \cite{angst2016}, which can be summarized as 
\begin{itemize}
\item $\alpha$ is $\mathrm{log}-$concave on $I$, i.e. the Hubble function $ H:=\alpha'/\alpha$ is non-increasing ;
\item $\alpha$ has either, polynomial, sub-exponential or exponential growth.
\end{itemize}
All Robertson--Walker space-times are conformally flat, so they also admit a conformal compactification as subsets of Einstein static universe as the model space-times considered above. Nevertheless, this compactification is not intrisic and this is the reason why another compactification is usually prefered in the physical and mathematical litterature: the causal boundary. This compactification, first introduced in \cite{geroch}, consists in attaching an ideal point to every inextensible, time/light-like curve in such a way that the ideal point only depends on the past of the trajectory. The precise definition of the causal boundary and its determination in the case of Robertson--Walker space-times can be found in details in \cite{flores}. In the sequel, the (future oriented component of the) causal boundary of a space-time $\mathcal M$ will be simply denoted by $\partial \mathcal M$.

\subsubsection{Relativistic Brownian motion}
\label{sec.RWdiffusion}

The stochastic process which is the object of our attention in this article is the relativistic diffusion introduced in \cite{flj}. Let us recall that this process is the natural generalization of the standard Riemannian Brownian motion to the Lorentzian context. The sample paths $(\xi_s, \dot{\xi}_s)$ of the relativistic diffusion are time-like curves that are future directed and parametrized by the arc length $s$ so that the diffusion actually lives on the positive part of the unitary tangent bundle $T^1_+ \mathcal M$ of a general Lorentzian manifold $(\mathcal M,g)$ of dimension $d+1$.
These sample paths can be seen either as random perturbations of time-like geodesics or as the stochastic developement of Dudley's Minkowskian diffusion in the initial fixed tangent space, the latter being the unique continuous Markov process whose law is Lorentz covariant, see \cite{dudley1}.
More prosaically, the infinitesimal generator of the diffusion writes 
\begin{equation}\mathcal L:= \mathcal L_0 + \frac{\sigma^2}{2 } \Delta_{\mathcal V},\end{equation}
where the differential operator $\mathcal L_0$ generates the geodesic flow on $T^1 \mathcal M$, $\Delta_{\mathcal V}$ is the vertical Laplacian, and $\sigma > 0$ is a real parameter. Equivalently, if $\xi^{\mu}$ is a local chart on $\mathcal M$ and if $\Gamma_{\nu \rho}^{\mu}$ denote the usual Christoffel symbols, the relativistic diffusion is the solution of the following system of stochastic differential equations (in It\^o form), for $0 \leq \mu \leq d$
\begin{equation}\label{eqn.flj}
\left \lbrace \begin{array}{l}
\displaystyle{ d \xi^{\mu}_s = \dot{\xi}_s^{\mu} ds}, \\
\displaystyle{ d \dot{\xi}^{\mu}_s= -\Gamma_{\nu \rho}^{\mu}(\xi_s)\, \dot{\xi}^{\nu}_s \dot{\xi}^{\rho}_s ds + d \times \frac{\sigma^2}{2}\, \dot{\xi}^{\mu}_s ds+ \sigma  d M^{\mu}_s}, 
\end{array}\right.
\end{equation}
where the braket of the martingales $M^{\mu}_s$ is given by
$$\langle dM_s^{\mu}, \;  dM_s^{\nu}\rangle = (\dot{\xi}^{\mu}_s \dot{\xi}^{\nu}_s +g^{\mu \nu}(\xi_s))ds.$$
Moreover, since the sample paths are parametrized by the arc length $s$, we have the pseudo-norm relation:
\begin{equation}\label{eqn.pseudo}
g_{\mu \nu}(\xi_s) \dot{\xi}^{\mu}_s \dot{\xi}^{\nu}_s =-1.
\end{equation}

\subsubsection{Poisson boundary and the d\'evissage method}\label{sec.devissage}
Let us conclude this introduction by emphasizing that the long time asymptotic behavior of a stochastic process is fully captured by its invariant sigma field, also classically called the Poisson boundary. Recall that a continuous Markov process $(Z_s)_{s \geq 0}$ with values in a state space $E$ can always be realized as the coordinate process $Z_s(\omega)=\omega_s$ on the canonical probability space $(\Omega, \mathcal F)$ where $\Omega:= C(\mathbb R^+, E)$ is the paths space and $\mathcal F$ is its standard Borel sigma field. The asymptotic sigma field of the process is then defined as
$ \mathcal F^{\infty} := \bigcap_{s \geq 0} \sigma( Z_u, \, u >s).$
Considering the classical shift operators $(\theta_t)_{t \geq 0}$ on $\Omega$:
\[ 
\begin{array}{lcll} \theta_s : &  \Omega & \to & \Omega \\ & \omega=(\omega_s)_{s \geq 0} & \mapsto & \theta_t \omega := (\omega_{t+s})_{s \geq 0} \end{array},
\]
the invariant sigma field $\textrm{Inv}((Z_s)_{s \geq 0})$ associated to the process $(Z_s)_{s \geq 0}$ is defined as the sub-sigma field of $\mathcal F^{\infty}$ composed of invariant events, that is events $A \in \mathcal F^{\infty}$ such that $\theta_t^{-1} A=A$ for all $t >0$.
It is well know that the Poisson boundary identifies with the set of bounded harmonic functions for the generator of the process, see Proposition (3.4) of \cite{revuz}.
\par
\medskip
As announced in Section \ref{sec.intro}, our main tool to determine the Poisson boundary of the relativistic diffusion will be the d\'evissage method introduced by the authors in the paper \cite{devissage}. For the sake of self containess, let us recall here the framework and main results of the latter.
Let $X$ be a differentiable manifold and $G$ a finite dimensional connected Lie group, in particular $G$ carries a right invariant Haar measure $\mu$. 
If $K$ is a compact sub-group of $G$, we will denote by $G/K$ the associated homogenous space and by $\pi : G \to G/K$ the canonical projection.
As usual, let us denote by $C^{\infty}(X \times G, \mathbb R)$ the set of smooth functions from $X \times G$ to $\mathbb R$. From the natural left action of $G$ on itself
\[
\begin{array}{cll}  G \times G & \to & G \\   (g, h) & \mapsto & g.h := gh \end{array},
\]
we deduce a left action of $G$ on $C^{\infty}(X \times G, \mathbb R)$, namely:
\[ 
\begin{array}{cll}  G \times C^{\infty}(X\times G, \mathbb R) & \to & C^{\infty}(X \times G, \mathbb R) \\   (g, f) & \mapsto & g \cdot f := \left( (x,h) \mapsto f(x,g.h) \right)\end{array}.
\]
In this context, let $(X_s, Y_s)_{s \geq 0}$ be a diffusion process with values in $X \times G$ and with infinite lifetime. We denote by $\mathcal L$ its infinitesimal generator acting on $C^{\infty}(X \times G, \mathbb R)$. The law of a sample path $(X_s, Y_s)_{s \geq 0}$ starting from $(x,y)$ will be denoted by $\mathbb P_{(x,y)}$, and $\mathbb E_{(x,y)}$ will denote the associated expectation.
The hypotheses under which the d\'evissage method can be applied are of different nature and are the following.
\begin{hypo}\textcolor{white}{blanc}
\begin{enumerate}
\item the process $(X_s)_{s \geq 0}$ is a subdiffusion of $(X_s,Y_s)_{s \geq 0}$. The sigma field $\textrm{Inv}((X_s)_{s \geq 0})$ is either trivial or generated by a random variable $\ell_{\infty}$ with values in a separable measure space $(S,\mathcal G, \lambda)$, the law of $\ell_{\infty}$ being absolutely continuous with respect to $\lambda$. 
\item for any starting point $(x,y) \in X \times G$, the process $(Y_s)_{s \geq 0}$ converges $\mathbb P_{(x,y)}-$almost surely when $s$ goes to infinity to a random variable $Y_{\infty}$ in $G$.
\item  the infinitesimal generator $\mathcal L$ of the diffusion is equivariant under the action of $G$ on the space $C^{\infty}(X \times G, \mathbb R)$, i.e. $
\mathcal L(g \cdot f) = g \cdot (\mathcal L f)$, $\forall f \in C^{\infty}(X\times G, \mathbb R)$. 
\item all bounded $\mathcal L-$harmonic functions are continuous on $X \times G$.
\end{enumerate}
\end{hypo}

The main result of \cite{devissage} is then the following. 

\begin{theo}[Theorem 1 and 2 of \cite{devissage}]\label{theo.trivial}
Suppose that the full diffusion $(X_s,Y_s)_{s \geq 0}$ satisfies the above d\'evissage conditions, then the two sigma fields  
$\textrm{Inv}((X_s, Y_s)_{s \geq 0}) $ and $\textrm{Inv}((X_s)_{s \geq 0}) \vee \sigma(Y_{\infty})$
coincide up to $\mathbb P_{(x,y)}-$negligeable sets. More precisely 
\begin{enumerate}
\item if $\textrm{Inv}((X_s)_{s \geq 0})$ is trivial, then $\textrm{Inv}((X_s, Y_s)_{s \geq 0}) $ coincides with  $\sigma(Y_{\infty})$ up to negligeable sets. Equivalently, if $H$ is a bounded $\mathcal L-$harmonic function, then there exists a bounded mesurable function $\psi$ on $G$ such that $H(x,y)=\mathbb E_{(x,y)}[\psi(Y_{\infty})]$, for all $(x,y) \in X \times G$.
\item if $\textrm{Inv}((X_s)_{s \geq 0})$ is generated by a random variable $\ell_{\infty} \in S$, then $\textrm{Inv}((X_s, Y_s)_{s \geq 0}) $ coincides with  $\sigma(\ell_{\infty},Y_{\infty})$ up to negligeable sets. Equivalently, if $H$ is a bounded $\mathcal L-$harmonic function, then we have  $H(x,y)=\mathbb E_{(x,y)}[\psi(\ell_{\infty},Y_{\infty})]$ for all $(x,y)\in X\times G$, for a bounded mesurable function $\psi$ on $S\times G$.
\end{enumerate}
\end{theo}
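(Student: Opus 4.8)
The plan is to work throughout via the correspondence between the invariant $\sigma$-field and bounded $\mathcal{L}$-harmonic functions. By Proposition (3.4) of \cite{revuz}, together with Condition 4, identifying $\textrm{Inv}((X_s,Y_s)_{s\geq 0})$ up to negligible sets amounts to identifying all bounded $\mathcal{L}$-harmonic functions $H$ on $X\times G$. For such an $H$, the process $N_s:=H(X_s,Y_s)$ is a bounded $\mathbb{P}_{(x,y)}$-martingale, hence converges $\mathbb{P}_{(x,y)}$-a.s. and in $L^1$ to a limit $H_\infty$ with $H(x,y)=\mathbb{E}_{(x,y)}[H_\infty]$, and as $H$ ranges over bounded harmonic functions these limits generate $\textrm{Inv}((X_s,Y_s)_{s\geq 0})$ modulo null sets. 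The inclusion $\textrm{Inv}((X_s)_{s\geq 0})\vee\sigma(Y_\infty)\subseteq \textrm{Inv}((X_s,Y_s)_{s\geq 0})$ is the easy half: $Y_\infty$ is a shift-invariant, $\mathcal{F}^\infty$-measurable random variable by Condition 2 (indeed $Y_\infty(\theta_t\omega)=Y_\infty(\omega)$), and every invariant event of the subdiffusion pulls back through the coordinate projection $(X_s,Y_s)_s\mapsto (X_s)_s$ to an invariant event of the pair. So the whole content lies in the reverse inclusion.

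For the reverse inclusion I would first treat case 1, where $\textrm{Inv}((X_s)_{s\geq 0})$ is trivial, and show that any $H_\infty$ is $\sigma(Y_\infty)$-measurable. The key device is to turn Condition 3 into a probabilistic resetting identity: the equivariance $\mathcal{L}(g\cdot f)=g\cdot(\mathcal{L}f)$ translates into $\mathbb{P}_{(x,gy)}=g_*\mathbb{P}_{(x,y)}$ under the left action $(X_\cdot,Y_\cdot)\mapsto(X_\cdot,gY_\cdot)$, whence, conditionally on $\mathcal{F}_s$ and using that $X_s,Y_s$ are $\mathcal{F}_s$-measurable, the reset process $(X_{s+t},\,Y_s^{-1}Y_{s+t})_{t\geq 0}$ has law $\mathbb{P}_{(X_s,e)}$ and therefore depends on $\mathcal{F}_s$ only through $X_s$. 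Writing $H^{(g)}(x,w):=H(x,gw)=(g\cdot H)(x,w)$, which is again harmonic for each fixed $g$, one obtains $H_\infty=\lim_{t}H^{(Y_s)}(X_{s+t},Y_s^{-1}Y_{s+t})$, realizing $H_\infty$ as the boundary value of the harmonic function $H^{(Y_s)}$ along a process whose $Y$-component starts at $e$ and converges to $Y_s^{-1}Y_\infty$.

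I would then let $s\to\infty$. Since $Y_s\to Y_\infty$ we have $Y_s^{-1}Y_\infty\to e$, so the $Y$-contribution of the reset process collapses and, using continuity of $H$ (Condition 4) to pass $H^{(Y_s)}\to H^{(Y_\infty)}$, the limit $H_\infty$ becomes measurable with respect to the asymptotics of the subdiffusion $(X_s)$ together with $\sigma(Y_\infty)$; triviality of $\textrm{Inv}((X_s)_{s\geq 0})$ then forces $H_\infty$ to be $\sigma(Y_\infty)$-measurable, giving $H(x,y)=\mathbb{E}_{(x,y)}[\psi(Y_\infty)]$. Case 2 I would deduce from case 1 by disintegrating $\mathbb{P}_{(x,y)}$ over the value of $\ell_\infty$: on each fiber the conditioned subdiffusion has trivial invariant $\sigma$-field (as $\ell_\infty$ generates $\textrm{Inv}((X_s))$ and its law is absolutely continuous with respect to $\lambda$ by Condition 1), so case 1 applies fibrewise and yields a function $\psi(\ell_\infty,\cdot)$ of $Y_\infty$, whence $H(x,y)=\mathbb{E}_{(x,y)}[\psi(\ell_\infty,Y_\infty)]$. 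The main obstacle is the limiting argument of the second paragraph: justifying the exchange of the limits in $s$ and $t$, controlling $H^{(Y_s)}-H^{(Y_\infty)}$ uniformly enough along the reset paths, and cleanly separating the $X$-asymptotic contribution from the $Y_\infty$-contribution. This is precisely where Conditions 2, 3 and 4 must be combined, and where the absolute-continuity assumption in Condition 1 is needed to run the disintegration in case 2 without measurability pathologies.
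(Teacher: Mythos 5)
A preliminary remark on the comparison itself: the present paper does not prove this statement. Theorem \ref{theo.trivial} is recalled verbatim from Theorems 1 and 2 of \cite{devissage} and is used as a black box throughout Section \ref{sec.proof}, so your attempt can only be measured against the proof in that reference. Your setup does match the natural entry point of that proof: the dictionary between $\mathrm{Inv}((X_s,Y_s)_{s\geq0})$ and bounded $\mathcal L$-harmonic functions, the easy inclusion $\mathrm{Inv}((X_s)_{s\geq0})\vee\sigma(Y_\infty)\subset \mathrm{Inv}((X_s,Y_s)_{s\geq0})$, and above all the correct probabilistic reading of Condition 3, namely $\mathbb P_{(x,gy)}=g_*\mathbb P_{(x,y)}$ and the fact that the reset process $(X_{s+t},Y_s^{-1}Y_{s+t})_{t\geq0}$ has conditional law $\mathbb P_{(X_s,e)}$ given $\mathcal F_s$. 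Note also that the purely group-theoretic part of the ``collapse'' is genuinely fine: for a left-invariant metric, $\sup_{t\geq0}d(Y_s^{-1}Y_{s+t},e)\leq 2\sup_{u\geq s}d(Y_u,Y_\infty)\to0$.

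The decisive step, however, is missing, and the replacement you sketch would fail as stated. Condition 4 gives only pointwise continuity of $H$ on $X\times G$; it provides no control of $H(x',\cdot)$ that is uniform in $x'$ along the path $(X_{s+t})_{t\geq0}$, which in all the applications of this theorem leaves every compact subset of $X$ (the temporal component is transient). So the uniform collapse of the reset $Y$-path cannot be transferred through $H$: neither the replacement of $H^{(Y_s)}$ by $H^{(Y_\infty)}$ along the path, nor the conclusion $H_\infty=\lim_t H(X_t,Y_\infty)$, follows. Worse, for frozen $g$ the function $H(\cdot,g)$ is \emph{not} harmonic for the subdiffusion $(X_s)_{s\geq0}$ (the generator does not split, the $Y$-derivatives of $H$ do not vanish), so $\lim_t H(X_t,g)$ is not a martingale limit and need not exist; consequently there is no candidate $\mathrm{Inv}((X_s)_{s\geq0})$-measurable random variable to which the triviality hypothesis could be applied, and the sentence ``the limit $H_\infty$ becomes measurable with respect to the asymptotics of the subdiffusion together with $\sigma(Y_\infty)$'' is a restatement of the conclusion rather than an argument. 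You flag this as ``the main obstacle'', but resolving it is precisely the content of Theorems 1--2 of \cite{devissage}; acknowledging it does not discharge it. Finally, the reduction of case 2 to case 1 by disintegration is a step that would fail: conditioning on $\ell_\infty$ is a Doob $h$-transform that changes the generator, so the conditioned pair is no longer an $\mathcal L$-diffusion and none of the d\'evissage conditions (equivariance, continuity of the relevant bounded harmonic functions) is inherited fibrewise; ``case 1 applies fibrewise'' is therefore not licensed, and the absolute-continuity assumption on the law of $\ell_\infty$ does not repair this --- in \cite{devissage} it enters to guarantee a jointly measurable choice of $\psi$ while the argument of case 1 is rerun with $\ell_\infty$ carried along, not by fibrewise conditioning.
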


The above results in the Lie group framework can be naturally extended to the case where the Lie group $G$ is replaced by an homogeneous space $G/K$, as soon as the homogeneous diffusion can be lifted to a diffusion satisfying the preceeding hypotheses. 

\begin{theo}[Theorem 3 of \cite{devissage}]\label{theo.homo}
Let $(X_s,Y_s)_{s \geq 0}$ with values in $X \times G/K$, such that there exists a $K$-right equivariant diffusion $(X_s, G_s)_{s \geq 0}$ in $X \times G$ satisfying the above d\'evissage conditions and such that under $\mathbb P_{(x,y)}$ the process  $(X_s, Y_s)_{s \geq 0}$ has the same law as  $(X_s, \pi(G_s))_{s \geq 0}$ under $\mathbb P_{(x,g)}$ for $ g\in \pi^{-1}(\{ y \})$. Then for all starting points $(x,y) \in X \times G/K$, the two sigma fields  
$\textrm{Inv}((X_s, Y_s)_{s \geq 0}) $ and $ \textrm{Inv}((X_s)_{s \geq 0}) \vee \sigma(Y_{\infty})$ 
coincide up to $\Prob_{(x,y)}-$negligeable sets. 
\end{theo}

\newpage
\section{Statement of the results} \label{sec.statement}
In this section, we state the main results of the article, namely we determine the long time asymptotics and the Poisson boundary of the relativistic diffusion in all Lorentzian model space-times and in expanding Robertson--Walker space-times. This allows us to compare the stochastic compactification of the underlying manifolds given by the exit points of the relativistic diffusion, to the other (purely) geometric compactifications such as the conformal or causal boundaries.

\subsection{Lorentz model manifolds}

\label{sec.statementmodel}

Let us first consider the case of model space-times, i.e. Lorentzian manifolds with constant scalar curvature. The only two cases where the Poisson boundary of the relativistic Brownian motion was previously fully determined are the causal model space-times i.e. Minkowski and de Sitter space-times. In order to give a complete picture of what happen on model space-times, let us first recall these results.
\par
\medskip
As already noticed above, in causal model space-times, the conformal and causal boundaries coincide, namely they both topologically identify with a cone $\mathbb R^+ \times \mathbb S^{d-1}$ in the case of Minkowski space-time, and with a sphere $\mathbb S^{d}$ in the case of de Sitter space-time.
Note that in a causal Lorentzian manifold, by definition, any inextensible causal path converges to a point of the causal boundary so that it is obvious that the relativistic diffusion in the case of $\mathcal M= \R^{1,d}$ or $\dS$ will converge to a random point $\xi_{\infty} \in \partial \mathcal M$. What is remarkable here is that there is no extra invariant information, i.e. the Poisson boundary is fully described by the single random variable $\xi_{\infty}$.
\par
\medskip
In the case of Minkowski space-time this result was first established by Bailleul in \cite{ismael}, by Bailleul and Raugi in \cite{bailleul_raugi} and was also recovered by the authors in \cite{devissage} using the d\'evissage method.

\begin{theo}[\cite{ismael}, \cite{bailleul_raugi} and \cite{devissage}] \label{theo.poisson.Minko}
Let $(\xi_0, \dot{\xi}_0) \in T^1_+ \mathcal \R^{1,d}$ and let $(\xi_s, \dot{\xi}_s)_{s \geq 0}$ be the relativistic diffusion $T^1_+\mathcal \R^{1,d}$ starting from $(\xi_0, \dot{\xi}_0)$. 
Then almost surely as $s$ goes to infinity, the process $(\xi_s)_{s \geq 0}$ with values in $\R^{1,d}$ converges to a random point $\xi_{\infty}$ of the causal boundary $\partial \R^{1,d}$.  Moreover, the invariant sigma field of the full process $\text{Inv}((\xi_s, \dot{\xi}_s)_{s \geq 0})$ coincides with $\sigma(\xi_{\infty})$ up to negligeable sets.
\end{theo}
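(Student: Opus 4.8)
The natural strategy is to recognize that Minkowski space-time $\R^{1,d}$ has an isometry group rich enough to place us directly in the d\'evissage framework of Theorem \ref{theo.homo}, and then to verify the four d\'evissage conditions. The isometry group of $\R^{1,d}$ is the Poincar\'e group, whose linear part is the Lorentz group $SO_0(1,d)$; since the relativistic diffusion is Lorentz-covariant by construction, its velocity component $(\dot\xi_s)_{s\geq 0}$, living on the unit hyperboloid $\Hyp^d = \{\dot\xi : Q_{1,d}(\dot\xi,\dot\xi)=-1,\ \dot\xi \text{ future-directed}\}$, carries the relevant group action. The plan is to identify $\Hyp^d \cong SO_0(1,d)/SO(d)$ as the homogeneous-space factor $G/K$, with the spatial position playing an auxiliary role, and to set things up so that $Y_s$ is (a function of) the velocity process whose limit $Y_\infty$ is exactly the exit point $\xi_\infty$ on the boundary cone.

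First I would recall from the earlier literature (Dudley, and the construction in \cite{flj}) that $(\dot\xi_s)_{s\geq 0}$ is itself an autonomous diffusion on $\Hyp^d$ — a hyperbolic Brownian-type motion — that converges almost surely as $s\to\infty$ to a random point on the boundary sphere $\partial\Hyp^d \cong \s$ of the hyperboloid; this is the velocity-convergence result underlying condition (2). Second, I would show that the rescaled spatial path $\xi_s/s$ converges to the same light-like direction, so that $\xi_s$ approaches a point $\xi_\infty$ of the causal boundary $\partial\R^{1,d} \cong \R^+\times\s$, with the $\s$-component determined by the velocity limit and the $\R^+$-component (a "temporal gauge'') being, in fact, the only place where extra invariant information could hide. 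The key reduction is therefore to argue that the subdiffusion $(X_s)$ — here the component transverse to the convergent velocity direction, after quotienting out the $SO_0(1,d)$-action — has \emph{trivial} invariant sigma field, so that case (1) of Theorem \ref{theo.trivial} applies and $\text{Inv}((\xi_s,\dot\xi_s))$ reduces to $\sigma(Y_\infty)=\sigma(\xi_\infty)$.

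With the decomposition in hand, the remaining verifications are: the equivariance of $\mathcal L$ under the Lorentz action (condition (3)), which is immediate from Lorentz-covariance of the generator $\mathcal L_0 + \tfrac{\sigma^2}{2}\Delta_{\mathcal V}$; the continuity of bounded $\mathcal L$-harmonic functions (condition (4)), which follows from hypoellipticity of $\mathcal L$ — the martingale bracket shows the vertical Laplacian is elliptic on the fibers and the geodesic drift propagates regularity via H\"ormander's theorem; and the triviality of the subdiffusion's tail field (the first half of condition (1)). I expect this last triviality statement to be the main obstacle: one must exhibit the relevant subdiffusion as an ergodic or transient process whose asymptotics retain no invariant information beyond the velocity limit, typically by an explicit change of variables adapted to a horospherical or light-cone coordinate system on $\R^{1,d}$ in which the transverse dynamics becomes a recognizable (e.g. drift-dominated or mixing) diffusion. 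Once condition (1) is checked in its trivial form, Theorem \ref{theo.trivial}(1) delivers $\text{Inv}((\xi_s,\dot\xi_s)_{s\geq0}) = \sigma(\xi_\infty)$ up to negligible sets, and the almost-sure convergence $\xi_s \to \xi_\infty \in \partial\R^{1,d}$ is exactly condition (2) transported to the boundary cone, completing the proof.
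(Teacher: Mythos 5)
Your overall instinct---set the problem up for the d\'evissage method---is indeed the route the paper endorses: the paper does not reprove Theorem~\ref{theo.poisson.Minko} itself but cites \cite{ismael}, \cite{bailleul_raugi} and the d\'evissage proof of \cite{devissage}. However, the specific decomposition you propose is structurally wrong, and if it could be run it would prove a false statement.

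The first problem is your choice $G/K=\Hyp^d\cong \mathrm{SO}_0(1,d)/\mathrm{SO}(d)$ with $Y_s=\dot{\xi}_s$. Condition (2) of the d\'evissage hypotheses requires $Y_s$ to converge almost surely \emph{in} $G$ (resp.\ $G/K$); Dudley's diffusion is transient on $\Hyp^d$ and converges only to a point of the ideal boundary $\partial\Hyp^d\cong\mathbb S^{d-1}$, i.e.\ in a compactification, not in the homogeneous space itself, so condition (2) fails outright for this choice. Condition (3) fails as well: Lorentz covariance of $\mathcal L=\mathcal L_0+\frac{\sigma^2}{2}\Delta_{\mathcal V}$ is covariance under the \emph{joint} action on $(\xi,\dot{\xi})$, whereas the d\'evissage requires equivariance under the action of $G$ on the $Y$-factor alone; the transport term $\dot{\xi}^{\mu}\partial_{\xi^{\mu}}$ is not invariant when one boosts only the velocity. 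This is precisely why, in every implementation in the paper (Sections~\ref{sec.poisson.model} and~\ref{sec.RWinfinite}), the group-valued component is never the unit-tangent/velocity part: it is a component ($n_s$ in the Iwasawa decomposition, or a translation part $\delta_s$) which genuinely converges in the group and whose dynamics depends only on the $X$-part, so that equivariance under left translation is automatic.

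The second, more serious problem is the claimed identity $\sigma(Y_\infty)=\sigma(\xi_\infty)$ when $Y_\infty$ is a limit extracted from the velocity. The velocity limit only yields the direction $\theta_\infty\in\mathbb S^{d-1}$, whereas the causal boundary point is the pair $\xi_\infty=(\delta_\infty,\theta_\infty)\in\R^{+}\times\mathbb S^{d-1}$, with $\delta_\infty=\lim_{s}Q_{1,d}\big(\xi_s,(1,\theta_\infty)\big)$ built from the \emph{position} process; $\delta_\infty$ is an invariant random variable that is not $\sigma(\theta_\infty)$-measurable. So wherever $\delta_s$ lands in your decomposition you get a contradiction: if it sits in the subdiffusion $X_s$, then $\mathrm{Inv}((X_s)_{s\geq0})$ contains $\sigma(\delta_\infty)$ and is \emph{not} trivial, so case (1) of Theorem~\ref{theo.trivial} (which you invoke) does not apply---the ``triviality of the transverse subdiffusion'' you flag as the main obstacle is in fact false in your setup; if instead $\delta_s$ is to sit in $Y_s$, then $Y_s$ cannot be a function of the velocity and $\Hyp^d$ is the wrong homogeneous space. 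The correct scheme (in \cite{devissage}, and mirrored by the paper's proof of Theorem~\ref{theo.infhorEucli}, where d\'evissage is applied twice, first for $\Theta_\infty$ and then for $\delta_\infty$ with the group $(\R^d,+)$) takes a converging group-valued component encoding \emph{both} the angular limit---via the nilpotent Iwasawa group $N\cong\R^{d-1}$, in which convergence holds in the group itself---\emph{and} the scalar $\delta_s$, while the leftover subdiffusion (radial part plus the non-converging transverse fluctuations) is shown to have trivial invariant field by a coupling argument. Finally, note that even the almost sure convergence $\xi_s\to\xi_\infty\in\partial\R^{1,d}$ needs more than ``$\xi_s/s$ converges to a light-like direction'': one must prove convergence of $Q_{1,d}(\xi_s,(1,\theta_\infty))$, which requires quantitative rates on $\theta_s\to\theta_\infty$ and on the growth of the radial part of $\dot{\xi}_s$, a step your sketch does not address.
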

Figure \ref{fig.1} below illustrates the convergence of the process $(\xi_s)_{s\geq 0}$ to a random point $\xi_{\infty}$ of the causal / conformal boundary, which topologically identifies here with a cone $\mathbb R^+ \times \mathbb S^{d-1}$. A point $\xi_{\infty}=(\delta_{\infty},\theta_{\infty}) \in \partial \mathbb R^{1,d} \approx \mathbb R^+ \times \mathbb S^{d-1}$ corresponds to causal curves which go to infinity in the direction $\theta_{\infty} \in \mathbb S^{d-1}$ along an affine hyperplane characterized by the scalar $\delta_{\infty} \geq 0$. Let us notice that, tranversally to this hyperplane,  the process has non-converging fluctuations and refer to \cite{ismael} or Section 4.2.1 of \cite{devissage} for more details. 
\begin{figure}[ht]
\hspace{2cm}\begin{minipage}{14cm}
\includegraphics[scale=0.5]{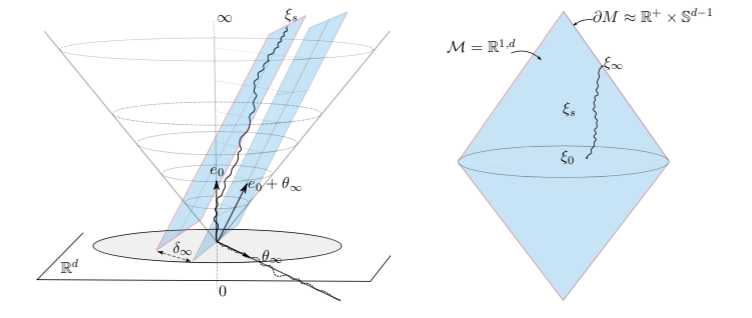}
\end{minipage}
\caption{Asymptotics of the relativistic diffusion in Minkowski space-time $\mathbb R^{1,d}$  and in the associated Penrose diagram.}\label{fig.1}
\end{figure}

The identification of the Poisson boundary of the relativistic diffusion with values in de Sitter space-time $\mathrm{dS}^{d+1}$ was established by the first author in \cite{angst1}.  We propose here in Section \ref{sec.ads} below a new /  alternative proof exploiting the Lie group and warped product structures of the unitary tangent bundle. 
\begin{theo}[ \cite{angst1}] \label{theo.poisson.dS}
Let $(\xi_0, \dot{\xi}_0) \in T^1_+ \dS$ and let $(\xi_s, \dot{\xi}_s)_{s \geq 0}$ be the relativistic diffusion $T^1_+\dS$ starting from $(\xi_0, \dot{\xi}_0)$. 
Then almost surely as $s$ goes to infinity, the process $(\xi_s)_{s \geq 0}$ with values in $\dS$ converges to a random point $\xi_{\infty}$ of the causal boundary $\partial \dS$.  Moreover, the invariant sigma field of the full process $\text{Inv}((\xi_s, \dot{\xi}_s)_{s \geq 0})$ coincides with $\sigma(\xi_{\infty})$ up to negligeable sets.
\end{theo}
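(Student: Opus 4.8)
The plan is to deduce the statement from the dévissage machinery, specifically from the homogeneous version Theorem \ref{theo.homo}, using crucially that the unitary tangent bundle $T^1_+\dS$ is a homogeneous space of the isometry group $G=SO_0(1,d+1)$ and that $\dS$ carries the global warped-product structure $\R\times_{\cosh}\mathbb{S}^d$. The two structures play complementary roles: the warped product produces the convergent ``angular'' variable living on the boundary sphere, while the Lie group structure furnishes the equivariance needed to run the dévissage. Concretely, I would fix a base frame, identify $T^1_+\dS\simeq G/SO(d)$, and lift the relativistic diffusion to an $SO(d+1)$-equivariant diffusion, so that Theorem \ref{theo.homo} can be applied with the compact group $SO(d+1)$ and the homogeneous space $SO(d+1)/SO(d)\simeq\mathbb{S}^d$, the latter being identified with the future causal/conformal boundary $\partial\dS$.

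First I would establish the almost sure convergence. Writing the timelike geodesic issued from $(x,\dot\xi)$ as $\gamma(s)=\cosh(s)\,x+\sinh(s)\,\dot\xi$ inside the ambient space $\R^{d+2}$, its future endpoint is the null ray $[x+\dot\xi]$; this identifies $\partial\dS$ with the sphere of future null directions $\simeq\mathbb{S}^d$ and provides a $G$-equivariant projection $T^1_+\dS\to\mathbb{S}^d$. Using the reduced radial dynamics in the coordinates $(t,\dot t)$, equivalently the $SO(d+1)$-invariants $(x_1,\dot\xi_1)$, I would show that $t_s\to+\infty$ and that the total angular displacement $\int_0^\infty|\dot\theta_s|\,ds$ is finite because of the exponential expansion $\alpha=\cosh$; this forces the spatial projection $\theta_s$, hence the boundary projection $Y_s$, to converge almost surely to a random point $\xi_\infty\in\partial\dS$. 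Since the corresponding asymptotics are already available in \cite{camille}, this first step can equally be quoted rather than redone.

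Next I would set up the dévissage itself. The variable $Y_s\in\mathbb{S}^d$ is the convergent group part, with limit $Y_\infty=\xi_\infty$. The complementary degrees of freedom $X_s$, namely the radial pair $(t_s,\dot t_s)$ together with the direction of $\dot\theta_s$ in $T_{\theta_s}\mathbb{S}^d$, form the candidate subdiffusion; since the fibration $T^1_+\dS\to\mathbb{S}^d$ is not globally trivial, one has to trivialize it by lifting to $SO(d+1)$, which is exactly what Theorem \ref{theo.homo} is designed for. Equivariance of $\mathcal L$ under the left action of $SO(d+1)$ (condition $3$) is immediate from the Lorentz covariance of the relativistic diffusion, since $SO(d+1)\subset G$ acts by isometries; and continuity of bounded harmonic functions (condition $4$) follows from the hypoellipticity of the generator $\mathcal L$ in \eqref{eqn.flj}.

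The hard part will be condition $1$: showing that $X_s$ is an autonomous subdiffusion whose invariant $\sigma$-field is trivial. Autonomy is the symmetry reduction by $SO(d+1)$ once the bundle has been trivialized, but the triviality of $\mathrm{Inv}((X_s)_{s\ge0})$ is the genuinely delicate, geometry-specific point. It amounts to proving that, in contrast with the Minkowski case where a transverse scalar $\delta_\infty$ survives, the exponential expansion of $\dS$ leaves no extra convergent invariant: on the one hand it makes the angular displacement integrable, so that $\theta_s$ converges; on the other hand the velocity carries noise at constant rate in the proper time $s$, so that the direction of $\dot\theta_s$ keeps being randomized and its tail field is trivial. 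Establishing this mixing, i.e. that the reduced radial-plus-direction diffusion couples and admits no non-constant bounded harmonic function, is where the real work lies, and it is precisely here that the fine asymptotics of $(t_s,\dot t_s)$ must be controlled. Granting it, Theorem \ref{theo.homo} yields $\mathrm{Inv}((\xi_s,\dot\xi_s)_{s\ge 0})=\mathrm{Inv}((X_s)_{s\ge0})\vee\sigma(Y_\infty)=\sigma(\xi_\infty)$, which is the desired conclusion.
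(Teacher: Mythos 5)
Your route is genuinely different from the one the paper takes for this theorem. The paper proves Theorems \ref{theo.poisson.dS} and \ref{theo.poisson.AdS} simultaneously by lifting the diffusion to the semisimple group $\mathrm{PSO}(1,d+1)$ and working in Iwasawa coordinates $g_s=n_sa_sk_s$: there the \emph{convergent} variable fed to the d\'evissage is the nilpotent component $n_s$ (which parametrizes the boundary sphere minus a point, Remark \ref{rem.dS}), the subdiffusion is $(a_s,\theta_s)$, and the key Liouville statement, Proposition \ref{trivial}, is proved by a Furstenberg--Raugi type harmonic-function argument rather than by coupling. Your decomposition is dual to this: the boundary sphere appears as the convergent spatial projection of a warped product, while the ergodic variables are the temporal pair and the velocity direction. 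That is essentially the scheme the paper itself applies to Robertson--Walker space-times with finite horizon and $H^3\notin\mathbb{L}^1$ (Section \ref{sec.RWfinite}), and the scheme of \cite{angst1}. What your route buys is a more elementary, coupling-based argument tied directly to the causal boundary; what the paper's route buys is uniformity, since it also handles $\AdS$, which admits no expanding warped-product structure and no causal boundary. One caveat specific to your choice of slicing: $\dS=\R\times_{\cosh}\mathbb{S}^d$ has \emph{increasing} Hubble function $H=\tanh$, so the standing hypotheses of Section \ref{sec.geoRW} fail and you cannot quote the Robertson--Walker results (Proposition \ref{myprop}, Proposition \ref{pro.temptrivial}, Remark \ref{rem.indepcoupling}) verbatim; you would have to rework them for $\alpha=\cosh$, or use the flat slicing $\alpha(t)=e^t$ as in \cite{angst1}, which moreover makes the tangent bundle of the fiber trivial and removes the lifting issue below.

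The genuine gap is the appeal to Theorem \ref{theo.homo}. With $Y_s=\theta_s\in\mathbb{S}^d=\mathrm{SO}(d+1)/\mathrm{SO}(d)$, that theorem requires a $K$-right equivariant lift $(X_s,G_s)$ whose group component \emph{converges} in $\mathrm{SO}(d+1)$. The natural frame lift $g_s$ (with $x_s=g_s(e_0)$, $\dot{x}_s/|\dot{x}_s|=g_s(e_1)$) does not converge: for $\dS$ the clock $C_s=\int_0^s du/(\dot{t}_u^2-1)$ diverges, so the rotational part of $g_s$ keeps being randomized, exactly as you observe. One must therefore first factor $g_s=u_sb_s$, where $b_s\in\mathrm{SO}(d)$ solves the purely noisy (time-changed, ergodic) equation and $u_s$ has almost surely finite total length because $D_\infty<+\infty$, hence converges; the subdiffusion is then $X_s=(t_s,\dot{t}_s,b_s)$ --- your ``direction of $\dot\theta_s$'' is really $b_s$ --- and the d\'evissage applies in its \emph{group} form, Theorem \ref{theo.trivial}, with $Y_s=u_s$. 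Even then one is not done: the conclusion $\mathrm{Inv}=\sigma(u_\infty)$ concerns the lifted diffusion, and descending to the relativistic diffusion is not covered verbatim by Theorem \ref{theo.homo}, because the right $\mathrm{SO}(d)$-action identifying the fibers also acts on the $X$-component (it sends $(b,u)$ to $(k^{-1}b,uk)$); the paper performs this descent by hand, via Lemma \ref{invk} and averaging over the Haar measure of $\mathrm{SO}(d)$, to show that the only invariant information surviving the projection is $x_\infty=u_\infty(e_0)$. So your plan closes once you (i) perform the splitting, (ii) prove the Liouville property for $(t_s,\dot{t}_s,b_s)$ --- your coupling sketch is the right strategy, it is Proposition \ref{prop.predeviss} of the paper --- and (iii) carry out the quotient step; as written, (i) and (iii) are absorbed into an invocation of Theorem \ref{theo.homo} that does not apply as stated, and (ii) is only sketched.
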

Via a stereographic projection, Figure \ref{fig.2} below represents the de Sitter space-time in the projective space as the complementary set of a ball. The causal boundary then identifies with the topological boundary i.e the sphere $\mathbb S^d$, and the typical almost sure bahavior of the relativistic diffusion is to go to infinity towards a random point on this sphere. 
\begin{figure}[ht]
\begin{center}
\includegraphics[scale=0.5]{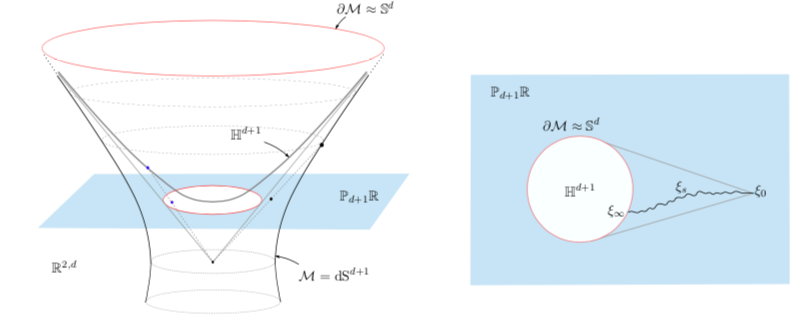}
\end{center}
\caption{Projective image of de Sitter space-time and asymptotics of the relativistic diffusion.}\label{fig.2}
\end{figure}

\begin{rema}\label{rem.model}
Let us emphasize here that in the two cases of causal model space-times $\R^{1,d}$ and $\dS$, the probabilistic information at infinity of the full relativistic diffusion $(\xi_s, \dot{\xi}_s)$ is thus carried by the single first projection $\xi_s$ with values in $\mathcal M$. In particular, no information is hidden in the asymptotic behavior of the derivative $\dot{\xi}_s$, nor in its interaction with its antiderivative. 
Moreover, the set of exit points of the diffusion, which can be seen as a stochastic compactification of the base manifold, actually coincides with the purely geometric (causal or conformal) boundaries.
\end{rema}

Let us now turn to new results and consider the last case of model space-time, namely the Anti de Sitter space-time. As already noticed in Section \ref{sec.background}, this space-time is not causal and therefore has no causal boundary. Nevertheless, it has a natural conformal compactification which identifies with the Einstein static space-time $\Ein$. The following theorem is proved in Section \ref{sec.ads} below, exploiting again the Lie group and warped product structures of the unitary tangent bundle. 

\begin{theo} \label{theo.poisson.AdS}
Let  $(\xi_0, \dot{\xi}_0) \in T^1_+ \AdS$ and let $(\xi_s, \dot{\xi}_s)_{s \geq 0}$ be a relativistic diffusion process in $T^1_+\AdS$ starting from $(\xi_0, \dot{\xi}_0)$. 
Then almost surely as $s$ goes to infinity, the process $(\xi_s)_{s \geq 0}$ is asymptotic to a random light circle $\ell_{\infty}$ in the conformal boundary identified with the Einstein conformal manifold $\Ein$. The sample path $(\xi_s, \dot{\xi}_s)_{s \geq 0}$ carries an extra invariant information given by a random point $p_{\infty} \in \ell_{\infty}$ on the limit light circle and the invariant $\sigma$-field of the full diffusion $(\xi_s, \dot{\xi}_s)_{s\geq0}$ coincides with $\sigma(p_{\infty},\ell_{\infty})$ up to negligeable sets. 
\end{theo}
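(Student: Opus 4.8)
The plan is to deduce the result from the homogeneous dévissage theorem (Theorem~\ref{theo.homo}), after exploiting the fact that $T^1_+\AdS$ is a homogeneous space under the isometry group $G=\mathrm{SO}_0(2,d)$ of $\AdS$. First I would identify $T^1_+\AdS$ with $G/K$, where $K=\mathrm{SO}(d)$ is the compact stabilizer of a point of $\AdS$ together with a future-directed unit timelike vector at it, and use the Lorentz-covariance of the relativistic diffusion to realize it as the projection of a $K$-right-equivariant diffusion $(G_s)_{s\ge0}$ on $G$ itself. This lift supplies the equivariance required by the third dévissage condition, while the continuity of bounded $\mathcal{L}$-harmonic functions (fourth condition) follows from the hypoellipticity of $\mathcal{L}$: by Hörmander's bracket criterion the vertical Laplacian and the geodesic vector field $\mathcal{L}_0$ generate the whole tangent space of $T^1_+\AdS$.

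Next I would use the warped-product structure to produce the subdiffusion and to translate the boundary data geometrically. In global coordinates the metric of $\AdS$ reads $-\cosh^2(\rho)\,dt^2+d\rho^2+\sinh^2(\rho)\,d\Omega^2$, exhibiting $\AdS$ as a warping of the time factor over the hyperbolic fiber $\Hyp^d$, whose conformal boundary compactifies to $\Ein=\Sbf^1\times\s$; a light circle is then exactly a null geodesic of $\Ein$, equivalently the projectivization of a totally isotropic $2$-plane of $(\R^{d+2},Q_{2,d})$, inside which $p_\infty$ is an isotropic line. Exploiting the time-translation Killing field $\partial_t$, I would extract from the motion an autonomous subdiffusion $(X_s)_{s\ge0}$ carrying the angular and hyperbolic degrees of freedom, whose long-time behaviour is governed by the geometry at infinity of $\Hyp^d$. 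Following the warped-product asymptotics of \cite{angst2016,camille} I would show that $\rho_s\to+\infty$ and that the relevant angular variables converge almost surely, so that $\mathrm{Inv}((X_s)_{s\ge0})$ is generated by a single random variable of absolutely continuous law which, after the above geometric translation, is precisely the limiting light circle $\ell_\infty$. This yields the first dévissage condition and the first assertion of the theorem.

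It then remains to treat the complementary $G$-valued component $(Y_s)_{s\ge0}$, which encodes the evolution in the time factor and the residual rotational phase. The key point is to prove its almost-sure convergence to a limit $Y_\infty\in G$ (second dévissage condition) by controlling the asymptotically null, oscillatory behaviour of $t_s$ as the trajectory winds towards the compactified time circle, and to identify the projection of $Y_\infty$ with the marked point $p_\infty\in\ell_\infty$. Once the four conditions are verified, Theorem~\ref{theo.homo} gives that $\mathrm{Inv}((\xi_s,\dot\xi_s)_{s\ge0})$ coincides, up to $\Prob_{(\xi_0,\dot\xi_0)}$-negligible sets, with $\mathrm{Inv}((X_s)_{s\ge0})\vee\sigma(Y_\infty)=\sigma(\ell_\infty,p_\infty)$, which is the desired conclusion.

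The main obstacle is concentrated in the convergence step for the group part. Unlike the causal cases $\R^{1,d}$ and $\dS$, where the whole Poisson boundary was produced by the convergence of the position alone, here the time coordinate does not converge but winds indefinitely around the compactified time circle; extracting a genuinely convergent $G$-valued quantity $Y_\infty$ out of this oscillation, and checking that the datum $p_\infty$ it carries is not already measurable with respect to the subdiffusion, is the delicate point where the specific AdS geometry has to be used. A secondary difficulty is to verify that $\mathrm{Inv}((X_s)_{s\ge0})$ is \emph{exactly} $\sigma(\ell_\infty)$ with $\ell_\infty$ of absolutely continuous law, i.e.\ that the hyperbolic sub-dynamics hides no further invariant information; this amounts to identifying the Poisson boundary of that sub-dynamics through a horospherical analysis of the escape to $\partial\Hyp^d$.
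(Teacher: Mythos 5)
Your overall strategy (lift to the isometry group, hypoellipticity for the regularity of bounded harmonic functions, then d\'evissage) is the right one, and your geometric dictionary (light circles as projectivizations of totally isotropic $2$-planes of $(\R^{d+2},Q_{2,d})$, with $p_\infty$ an isotropic line inside one) agrees with the paper's. But the specific decomposition you propose does not work, and the two places where you yourself localize the difficulty are precisely where a proof is missing, not merely delicate. First, your group component $Y_s$ (``time factor plus residual rotational phase'') is never actually defined, and no mechanism for its almost sure convergence is given; indeed none exists in those variables, since the AdS time winds indefinitely and the rotational phase is ergodic. In the paper the convergent quantity is of a completely different nature: writing the lifted diffusion $g_s\in\mathrm{PSO}(2,d)$ in Iwasawa coordinates $g_s=n_s a_s k_s$, it is the \emph{nilpotent} component $n_s$ that converges, and this convergence is not elementary --- it follows from the fact that $\frac{1}{s}\log a_s$ converges to a deterministic point in the \emph{interior} of the Weyl chamber (a Furstenberg--Virtser--Guivarc'h--Raugi--Liao type result, obtained by time discretization), which makes $\mathrm{Ad}(a_s)$ contract $\mathcal{N}$ exponentially fast. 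Nothing in your proposal produces this drift, so there is no candidate for $Y_\infty$.

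Second, your splitting of the boundary data --- $\ell_\infty$ carried by the hyperbolic/angular subdiffusion, $p_\infty$ by the group part --- is structurally wrong. A light circle of $\Ein$ is a null geodesic, i.e.\ a great circle of $\mathbb{S}^{d-1}$ \emph{together with} a time phase; from the spatial sub-path the AdS time can only be reconstructed as $t_0+\int_0^s\dot{t}_u\,du$ via the pseudo-norm relation, and this functional changes by an additive constant under the shift $\theta_T$, so the phase --- hence $\ell_\infty$ itself --- is \emph{not} a shift-invariant functional of your subdiffusion; at best $\mathrm{Inv}((X_s)_{s\geq0})$ sees the underlying great circle. The paper organizes the information the other way around: the single limit $n_\infty$ encodes \emph{both} $p_\infty$ and $\ell_\infty$ (the group $N$ is diffeomorphic to the set of pointed light circles avoiding a degenerate light cone), while the subdiffusion $(a_s,\theta_s)$ has \emph{trivial} invariant $\sigma$-field. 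Establishing that triviality is the remaining hard analytic step (Proposition \ref{trivial}), proved by a Furstenberg--Raugi type argument using right uniform continuity of bounded harmonic functions, the support of the time-one law and the ergodicity of $(k_s)_{s\geq0}$; your corresponding claim --- that the spatial sub-dynamics hides no invariant information beyond $\ell_\infty$ --- is exactly as hard and is simply deferred to an unspecified ``horospherical analysis''. So both pillars of your argument, the convergence of the group part and the identification of the subdiffusion's Poisson boundary, are missing, and they constitute essentially the whole content of the paper's proof.
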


In Figure \ref{fig.AdShyper} below the Anti de Sitter space-time is seen in the projective space as the interior of an one-sheeted hyperboloid which represent its conformal boundary (the Einstein universe of dimension $d$). A path of the relativistic diffusion is represented being asymptotic to a light circle in the hyperboloid (a line in the projective space so a circle topologically).

\begin{figure}[ht]
\begin{center}
\includegraphics[scale=0.5]{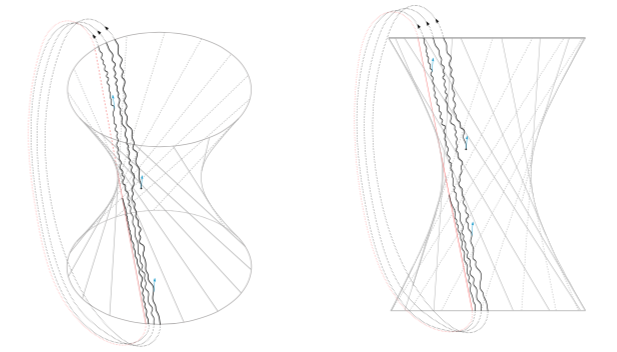}
\end{center}
\caption{Asymptotics in the projective  image of Anti de Sitter space-time.}\label{fig.AdShyper}
\end{figure}

More (geometric) details on the almost sure behavior of the relativistic diffusion on de Sitter and Anti de Sitter space-times are given in Section  \ref{sec.geomeaning} below. In particular, the significance of the above ``extra information'', i.e. the fact that random limit light circle is pointed is explained. 
\begin{rema}
The above Theorem \ref{theo.poisson.AdS} is thus the first example where the support of the Poisson boundary of the relativistic Brownian motion does not coincide with the natural geometric boundary. More precisely, the set of exit points of the relativistic diffusion is here richer than the conformal boundary. We will see in the next section that this fact, which could a priori sounds like an exception in the constantly curved case, is actually the rule in curved space-times.
\end{rema}

\subsection{Robertson--Walker space-times}

Let us now describe, in an exhaustive way, the asymptotic behavior of the relativistic diffusion in expanding Robertson--Walker space-times. Let us recall that the warping function $\alpha$ is assumed to $\log-$concave, in other words its logarithmic derivative $H:=\alpha'/\alpha$, known as the Hubble function, is assumed to be decreasing. From a geometric point of view, the geometry at (the future oriented) infinity of the manifold $\mathcal M=(0, +\infty) \times_{\alpha} M$ differs drastically depending on the finiteness of the integral 
\[
I(\alpha):=\int^{\infty} \frac{du}{\alpha(u)}.
\]
In the case where the integral $I(\alpha)$ is finite, the manifold is said to have a finite horizon, meaning in particular that the projections of light-like geodesics in the Riemannian fiber $M$ are convergent. Moreover, the (future oriented part of the) causal boundary of the manifold then identifies with a space-like copy of $M$, see \cite{flores}. The sample paths of the relativistic diffusion then obbey the following dichotomy, depending on the integrability of the Hubble function.

\begin{theo}\label{theo.finitehori}
Let $\mathcal M:=(0, +\infty) \times_{\alpha} M$ be a Robertson--Walker space-time such that $I(\alpha) < +\infty$.
Let $(\xi_0, \dot{\xi}_0) \in T^1_+ \mathcal M$ and let  $(\xi_s, \dot{\xi}_s)_{s \geq 0}=(t_s,x_s, \dot{t}_s, \dot{x}_s)_{s \geq 0}$ be the relativistic diffusion in $T^1_+\mathcal M$ starting from $(\xi_0, \dot{\xi}_0)$. 
\begin{enumerate}
\item if  $H^3 \notin \mathbb L^1$, then almost surely as $s$ goes to infinity, the process $(x_s)_{s \geq 0}$ converges to a random point $x_{\infty}$ in $M$, and the invariant sigma field of the full diffusion $\text{Inv}((\xi_s, \dot{\xi}_s)_{s \geq 0})$ coincides with $\sigma(x_{\infty})$ up to negligeable sets. 
\item if  $H^3 \in \mathbb L^1$, then almost surely as $s$ goes to infinity, the process $Y_s:=(x_s, \dot{x}_s / |\dot{x}_s|)_{s \geq 0}$ converges to a random point $Y_{\infty}$ in $T^1 M$, and the invariant sigma field of the full diffusion $\text{Inv}((\xi_s, \dot{\xi}_s)_{s \geq 0})$ coincides with $\sigma(Y_{\infty})$ up to negligeable sets. 
\end{enumerate}
\end{theo}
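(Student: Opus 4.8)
The plan is to apply the dévissage method (Theorem \ref{theo.homo}) with the group $G$ taken to be the isometry group of the constant-curvature fibre $M$, which acts on $\mathcal M=(0,+\infty)\times_\alpha M$ through the second factor and lifts to a smooth action on $T^1_+\mathcal M$. Since the Franchi--Le Jan construction makes the generator $\mathcal L$ covariant under isometries, the equivariance condition holds by construction, and the continuity of bounded $\mathcal L$-harmonic functions follows from the hypoellipticity of $\mathcal L$, the vertical Laplacian together with the geodesic vector field satisfying H\"ormander's bracket condition. It then remains to isolate the $G$-invariant subdiffusion, to verify that its invariant field is trivial, and to analyse the convergence of the fibre component, the last point being where the dichotomy originates.

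First I would introduce the adapted variables $(t_s,r_s,x_s,u_s)$, where $r_s:=\alpha(t_s)\,|\dot x_s|_h$ is the comoving momentum, $u_s:=\dot x_s/|\dot x_s|_h$ the direction of motion in the fibre, and $\dot t_s=\sqrt{1+r_s^2}$ by the pseudo-norm relation \eqref{eqn.pseudo}; in hyperbolic coordinates $\dot t_s=\cosh\beta_s$ and $r_s=\sinh\beta_s$. The pair $X_s:=(t_s,r_s)$ is $G$-invariant and forms an autonomous subdiffusion whose long-time behaviour is exactly the one described in \cite{angst2016}: balancing the geodesic drift $-H(t_s)(\dot t_s^2-1)$ against the relativistic drift $\tfrac{d\sigma^2}{2}\dot t_s$ in the equation for $\dot t_s$ yields the deterministic leading order $r_s\sim\dot t_s\sim \tfrac{d\sigma^2}{2H(t_s)}$. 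I would then show that the normalised momentum $H(t_s)\,r_s$ settles to this deterministic regime, so that a zero--one law gives the triviality of $\mathrm{Inv}((X_s))$, which is the triviality condition of the dévissage scheme.

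The fibre analysis is the heart of the argument. Since $|\dot x_s|_h=r_s/\alpha(t_s)\asymp 1/\alpha'(t_s)$ and $ds=dt/\dot t_s\asymp H(t)\,dt$, the length of the fibre path is $\int_0^\infty|\dot x_s|_h\,ds\asymp\int^\infty H(t)/\alpha'(t)\,dt=\int^\infty dt/\alpha(t)=I(\alpha)<\infty$, so $x_s$ converges almost surely to a point $x_\infty\in M$; this uses only the finite-horizon hypothesis and holds in both cases. The direction $u_s$ is driven by the angular part of the vertical Laplacian, whose intensity in the proper-time clock is of order $\sigma^2/\sinh^2\beta_s=\sigma^2/r_s^2$, so that its total quadratic variation is $\int^\infty ds/r_s^2\asymp\int^\infty H(t)^3\,dt$. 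Hence $u_s$ converges almost surely to some $u_\infty$ precisely when $H^3\in\mathbb L^1$, and otherwise keeps rotating. This is the main obstacle: it requires upgrading the leading-order asymptotics of $r_s$ into a control of the angular martingale sharp enough to pin down the integrability threshold, and, in the divergent case, to exclude any residual almost-sure limit.

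The two cases then follow from distinct specialisations of Theorem \ref{theo.homo}. When $H^3\in\mathbb L^1$, I would take $Y_s:=(x_s,u_s)$, living in the homogeneous space $T^1M\cong G/K'$, which lifts to a frame-bundle diffusion $G_s$ on $G$ converging almost surely, its translational part converging with $x_s$ and its rotational part converging because the angular quadratic variation is finite; Theorem \ref{theo.homo} then gives $\mathrm{Inv}((\xi_s,\dot\xi_s))=\mathrm{Inv}((X_s))\vee\sigma(Y_\infty)=\sigma(Y_\infty)$, the second assertion. When $H^3\notin\mathbb L^1$ the frame no longer converges, so I would instead project onto $M\cong G/K$ with $Y_s:=x_s$, absorbing the non-convergent direction into the complementary diffusion: the angular quadratic variation being infinite, $u_s$ behaves like a Brownian motion on the fibre sphere run for infinite time and equidistributes, so the enlarged subdiffusion $(t_s,r_s,u_s)$ still has trivial invariant field, and the adapted dévissage conclusion reads $\mathrm{Inv}((\xi_s,\dot\xi_s))=\sigma(x_\infty)$, the first assertion. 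The delicate point in this last case is precisely to fit the non-convergent direction into the homogeneous dévissage framework, that is, to prove rigorously that it carries no tail information beyond $x_\infty$.
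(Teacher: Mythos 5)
Your overall strategy is indeed the paper's strategy (dévissage after isolating the temporal subdiffusion), and two of the three configurations in your plan match the paper's proof: for $H^3\in\mathbb L^1$ the paper applies Theorem \ref{theo.homo} with $X_s=(t_s,\dot t_s)$ and $Y_s=(x_s,\dot x_s/|\dot x_s|)\in T^1M=G/K$, and for $H^3\notin\mathbb L^1$ with \emph{flat} fiber it applies Theorem \ref{theo.trivial} with $X_s=(t_s,\dot t_s,\dot x_s/|\dot x_s|)$ and $Y_s=x_s\in(\mathbb R^d,+)$, exactly as you propose. The genuine gap is your treatment of the first assertion when the fiber is curved, $M=\mathbb S^d$ or $\mathbb H^d$. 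First, your ``enlarged subdiffusion'' $(t_s,r_s,u_s)$ with $u_s=\dot x_s/|\dot x_s|$ is \emph{not} a subdiffusion there: by Equation \eqref{eqn.xdot} the drift of $\dot x_s/|\dot x_s|$ contains the term $-\kappa\,\dot D_s\,x_s\,ds$, so the direction decouples from the position only when $\kappa=0$. Second, the equivariance hypothesis of the dévissage scheme fails for your splitting: putting $Y_s=x_s$ and the direction into the $X$-component would require a group acting transitively on $M$ while leaving directions untouched; translations do this on $\mathbb R^d$, but isometries of $\mathbb S^d$ or $\mathbb H^d$ act diagonally on $(x,\dot x/|\dot x|)$, so no such group exists. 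Third, Theorem \ref{theo.homo} demands a $K$-equivariant \emph{convergent} lift $G_s\in G$ of $Y_s$, and in this regime the natural frame lift is precisely what fails to converge. So ``absorbing the non-convergent direction into the complementary diffusion'' is not merely delicate, as you acknowledge: the objects your plan needs do not exist in the curved case.

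The paper's resolution of this case is the part missing from your proposal. It lifts the spatial dynamics to the isometry group $G$ (Equation \eqref{dyn.g}) and splits $g_s=u_sb_s$, where $b_s\in\mathrm{SO}(d)$ (the stabilizer of $e_0$) solves the \emph{autonomous} equation \eqref{dyn.b} — this group-valued noise, not the geometric direction, is the correct non-convergent component, and $(t_s,\dot t_s,b_s)$ genuinely is a subdiffusion — while $u_s:=g_sb_s^{-1}$ has finite total length because $D_\infty<+\infty$, hence converges. Dévissage in the group form (Theorem \ref{theo.trivial} with $X_s=(t_s,\dot t_s,b_s)$, $Y_s=u_s$), together with the shift-coupling proof that $\mathrm{Inv}((t_s,\dot t_s,b_s))$ is trivial (Proposition \ref{prop.predeviss}), yields $\mathrm{Inv}=\sigma(u_\infty)$; a further descent argument (Lemma \ref{invk} plus averaging over the Haar measure of $\mathrm{SO}(d)$) is then needed to show that only $x_\infty=u_\infty(e_0)$ survives the projection back to $T^1_+\mathcal M$. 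Two secondary lapses of rigor in your text, both fixable by invoking the literature the paper relies on: the triviality of the temporal invariant field does not follow from a ``zero--one law'' attached to the deterministic asymptotics of $H(t_s)r_s$ (deterministic scaling limits never imply a trivial invariant $\sigma$-field by themselves); it is Proposition 4.8 of \cite{angst1}, proved by a coupling argument. Likewise, in the flat case, ``equidistribution'' of the direction does not give the triviality of $\mathrm{Inv}((t_s,\dot t_s,\Theta_s))$; the paper proves it by an explicit reflection shift-coupling (Proposition \ref{pro.etrivial}), using the Cranston--Wang criterion of \cite{cranston} and the independence of the spherical motion from the temporal subdiffusion.
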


\begin{figure}[ht]
\begin{center}
\includegraphics[scale=0.5]{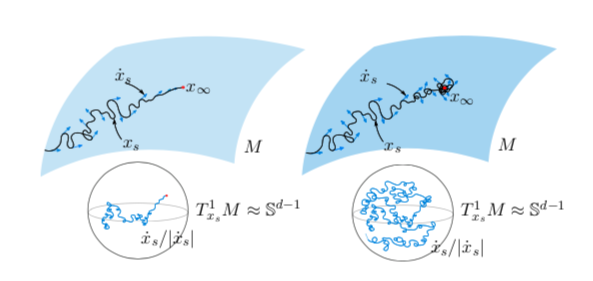}
\end{center}
\caption{Asymptotics of the projection of the relativistic diffusion in $T^1 M$ with finite horizon and depending on the fact that $H^3$ is integrable at infinity (left) or not (right).}
\end{figure}

\begin{rema}
In the first case where $H^3 \notin \mathbb L^1$, the support of the Poisson boundary of the relativistic diffusion thus again identifies with the purely geometric causal boundary $\partial \mathcal M$, i.e. the Riemannian fiber $M$ here. 
This happens for example for torsion functions $\alpha$ with exponential or sub-exponential growth, e.g. of the form $\alpha(t) = e^t$ or $\alpha(t)=e^{t^\beta}$ with $2/3<\beta<1$.
Nevertheless, in the second case where $H^3 \in \mathbb L^1$, which happens for example for all the torsion functions $\alpha$ with polynomial growth, the normalized derivative also converges to a random variable and the set of exit points of the diffusion is then richer than the causal boundary. 
\end{rema}

We now turn to the case of Robertson--Walker space-times $\mathcal M=(0, +\infty) \times_{\alpha} M$ with infinite horizon, i.e. $I(\alpha)=+\infty$. In this case, the asymptotic behavior of the diffusion heavily depends on the geometry of the Riemannian fiber $M=\mathbb R^d, \mathbb S^d$ or $\mathbb H^d$, this is the reason why we state our results separately.
Let us begin with the flat fiber case $M=\mathbb R^d$ and for simplicity, let us assume here (and here only) that the torsion function $\alpha$ has polynomial growth in the sense that there exists $0<c \leq 1$ such that $\lim_{t \to +\infty} H(t) \times t = c$.
\begin{theo}\label{theo.infhorEucli}
Let $\mathcal M:=(0, +\infty) \times_{\alpha} \mathbb R^d$ be a Robertson--Walker space-time such that $\alpha$ has polynomial growth and $I(\alpha) = +\infty$. Let $(\xi_0, \dot{\xi}_0) \in T^1_+ \mathcal M$ and let  $(\xi_s, \dot{\xi}_s)_{s \geq 0}=(t_s,x_s, \dot{t}_s, \dot{x}_s)_{s \geq 0}$ be the relativistic diffusion in $T^1_+\mathcal M$ starting from $(\xi_0, \dot{\xi}_0)$. 
Then almost surely as $s$ goes to infinity, the two processes $(\Theta_s)_{s \geq 0}:=(\dot{x}_s / |\dot{x}_s|)_{s \geq 0}$ and $(\delta_s)_{s \geq 0}:=(x_s - \Theta_s \int_1^{t_s} du/\alpha(u))_{s \geq 0}$
converge to random points $\Theta_{\infty} \in \mathbb S^{d-1}$ and $\delta_{\infty} \in \mathbb R^d$ respectively, and the invariant sigma field of the full diffusion $\text{Inv}((\xi_s, \dot{\xi}_s)_{s \geq 0})$ coincides with $\sigma(\Theta_{\infty},\delta_{\infty})$ up to negligeable sets. 
\end{theo}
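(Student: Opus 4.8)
The plan is to combine a direct analysis of the sample-path asymptotics with the homogeneous dévissage theorem (Theorem \ref{theo.homo}), the relevant symmetry being the isometry group of the flat fiber $G = E(d) := \R^d \rtimes O(d)$, with $K := O(d-1)$ and $G/K \cong \R^d \times \s = T^1\R^d$. First I would write the diffusion explicitly in the chart $(t,x)$: for $g = -dt^2 + \alpha^2(t)\,h$ the only non-vanishing Christoffel symbols are $\Gamma^0_{ij} = \alpha^2 H\,\delta_{ij}$ and $\Gamma^i_{0j} = H\,\delta^i_j$ with $H = \alpha'/\alpha$, so that, using \eqref{eqn.pseudo} in the form $\dot t_s^2 - 1 = \alpha^2(t_s)|\dot x_s|^2$, the system \eqref{eqn.flj} closes on the time variables: $dt_s = \dot t_s\,ds$ and $d\dot t_s = [-H(t_s)(\dot t_s^2-1) + d\tfrac{\sigma^2}{2}\dot t_s]\,ds + \sigma\,dM^0_s$ with $\langle dM^0_s\rangle = (\dot t_s^2-1)\,ds$. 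Hence $X_s := (t_s,\dot t_s)$ is a genuine subdiffusion. I would then record its almost sure asymptotics, borrowing the estimates of \cite{angst2016}: in the polynomial regime $H(t)t\to c$ the variable $\dot t_s$ is attracted to the moving equilibrium $\dot t_s \asymp (d-1)\sigma^2 t_s/(2c)$, so that $t_s$ and $\dot t_s$ grow exponentially in $s$; in particular $\dot t_s\to+\infty$ and $\int^\infty ds/(\dot t_s^2-1) < \infty$ almost surely. The triviality of $\mathrm{Inv}((t_s,\dot t_s))$, required by the first dévissage condition, would be checked separately, the two-dimensional subdiffusion carrying no asymptotic directional information.

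Next I would establish the convergence of the fiber variables. Since the drift of $\dot x_s$ in \eqref{eqn.flj} is purely radial (proportional to $\dot x_s^i$), the direction $\Theta_s = \dot x_s/|\dot x_s|$ is driven only by the tangential part of the martingale, and a short computation gives $d\langle\Theta\rangle_s = \tfrac{\sigma^2}{\dot t_s^2-1}(\mathrm{Id}-\Theta_s\Theta_s^{\mathsf T})\,ds$; the integrability from the first step then yields $\Theta_s \to \Theta_\infty \in \s$ almost surely. For the offset, I would differentiate $\delta_s = x_s - \Theta_s\varphi(t_s)$ with $\varphi(t) := \int_1^t du/\alpha(u)$ and $d\varphi(t_s) = (\dot t_s/\alpha(t_s))\,ds$; since $\varphi(t_s)$ has finite variation its cross-bracket with $\Theta_s$ vanishes, and using $|\dot x_s| = \sqrt{\dot t_s^2-1}/\alpha(t_s)$ the radial $\Theta_s\,ds$ contributions collapse to a term of size $O(1/(\dot t_s\alpha(t_s)))$. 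The only delicate contribution is therefore $-\varphi(t_s)\,d\Theta_s$, whose martingale part has quadratic-variation density $\varphi(t_s)^2\sigma^2/(\dot t_s^2-1)$ and whose drift part is $O(\varphi(t_s)/(\dot t_s^2-1))$. The asymptotics of the first step show that $\varphi(t_s)$ grows strictly slower than $\dot t_s$, in fact $\varphi(t_s)^2/(\dot t_s^2-1)$ is exponentially small in $s$, which forces $\int^\infty\varphi(t_s)^2\,d\langle\Theta\rangle_s < \infty$ and hence $\delta_s \to \delta_\infty \in \R^d$. This balance, and thus the sharp asymptotics of $(t_s,\dot t_s)$ that control it, is the main obstacle of the proof; it is precisely here that both the polynomial growth of $\alpha$ and the infinite-horizon assumption $I(\alpha)=+\infty$ enter quantitatively.

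Finally I would invoke the dévissage mechanism. The change of variables $(x,\Theta)\mapsto(\delta,\Theta)$ commutes with the action of $E(d)$ — translations act on $\delta$ by translation and fix $\Theta$, rotations act diagonally — so it is $G$-equivariant; since the relativistic diffusion is covariant under the fiber isometries $E(d)$, the generator $\mathcal L$ is equivariant in the sense of the third dévissage condition. In the coordinates $(X_s, Y_s)$ with $X_s = (t_s,\dot t_s)$ and $Y_s = (\delta_s,\Theta_s) \in G/K$, the four dévissage hypotheses are then in force: $X_s$ is a subdiffusion with trivial invariant field (step one), $Y_s \to Y_\infty = (\delta_\infty,\Theta_\infty)$ almost surely (step two), $\mathcal L$ is $G$-equivariant, and bounded $\mathcal L$-harmonic functions are continuous by hypoellipticity of the generator. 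Lifting $Y_s$ to a $K$-right-equivariant $E(d)$-valued frame process (which converges as soon as $\Theta_s$ does), Theorem \ref{theo.homo} gives $\mathrm{Inv}((\xi_s,\dot\xi_s)_{s\ge0}) = \mathrm{Inv}((t_s,\dot t_s)) \vee \sigma(\Theta_\infty,\delta_\infty)$, which by the triviality of the subdiffusion's tail equals $\sigma(\Theta_\infty,\delta_\infty)$ up to negligible sets, as announced.
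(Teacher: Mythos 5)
Your proposal is sound and reaches the theorem by the same overall strategy as the paper --- temporal subdiffusion with trivial tail, almost sure convergence of $(\Theta_s,\delta_s)$ forced by the convergent clock $C_s=\int_0^s du/(\dot t_u^2-1)$ and the exponential rates of Proposition \ref{prop.asympdott}, then d\'evissage --- but you organize the d\'evissage step differently. The paper applies the method \emph{twice}, iteratively: first Theorem \ref{theo.homo} with $X_s=(t_s,\dot t_s)$ and $Y_s=\Theta_s\in\mathbb S^{d-1}$ viewed as a homogeneous space of the rotation group, which combined with Proposition \ref{pro.temptrivial} gives $\mathrm{Inv}((t_s,\dot t_s,\Theta_s))=\sigma(\Theta_\infty)$; then Theorem \ref{theo.trivial} with $X_s=(t_s,\dot t_s,\Theta_s)$ and $Y_s=\delta_s$, the acting group being $(\R^d,+)$, which gives $\sigma(\Theta_\infty,\delta_\infty)$. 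You instead perform a \emph{single} d\'evissage with the full fiber isometry group, taking $Y_s=(\delta_s,\Theta_s)\in G/K\cong T^1\R^d$. Both routes are legitimate instances of the method: yours is more compact and makes the limit object transparently an exit point in $T^1\R^d$, while the paper's iteration keeps each acting group either compact or abelian, so the equivariance and lifting hypotheses are essentially immediate at each step, whereas your version must construct and control a motion-group-valued lift once, for a semidirect product. Likewise, your convergence proofs are direct SDE estimates (bracket of $\Theta$ of order $\dot C_s$; the delicate term $-\varphi(t_s)\,d\Theta_s$ controlled by $\varphi(t_s)^2\dot C_s$, exponentially small), where the paper quotes \cite{angst2016} for the skew-product form $\Theta_s=\widetilde\Theta(C_s)$ and splits $\delta_s$ into three convergent terms; these are equivalent computations resting on the same rates.

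Four small repairs are needed. (i) The d\'evissage framework requires $G$ connected, so take $G=\R^d\rtimes\mathrm{SO}(d)$ and $K=\mathrm{SO}(d-1)$ rather than the $\mathrm{O}$-groups. (ii) Convergence of the lift does not follow ``as soon as $\Theta_s$ converges'': a lift could a priori keep rotating inside the stabilizer of $\Theta_\infty$. What makes the rotation part converge is again $C_\infty<+\infty$: it solves a left-invariant SDE whose total quadratic variation and drift variation are of order $C_\infty$, which is exactly how the paper handles its lifted processes in Section \ref{sec.RWfinite}. (iii) The triviality of $\mathrm{Inv}((t_s,\dot t_s))$ is a genuine Liouville theorem, Proposition \ref{pro.temptrivial} (Proposition 4.8 of \cite{angst1}), proved there by a coupling argument; it is not a soft consequence of the subdiffusion ``carrying no asymptotic directional information'', so it must be cited or reproved. (iv) Your heuristic equilibrium $\dot t_s\asymp (d-1)\sigma^2 t_s/(2c)$ has the wrong constant: the self-consistent rate from Proposition \ref{prop.asympdott} is $\dot t_s\approx \frac{(d-1)\sigma^2}{2(1+c)}\, t_s$. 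Nothing downstream depends on this slip, since your key estimate --- that $\varphi(t_s)^2/(\dot t_s^2-1)$ decays exponentially --- does hold with the correct rates, the decay exponent being $c(d-1)\sigma^2/(1+c)$.
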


\begin{figure}[ht]
\begin{center}
\includegraphics[scale=0.45]{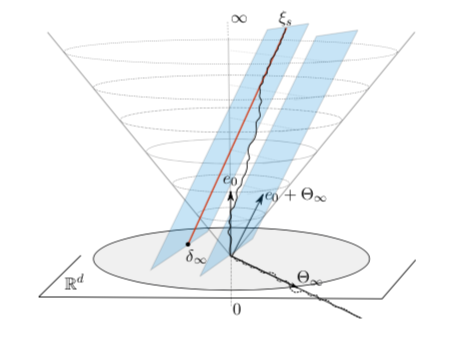}
\end{center}
\caption{Asymptotics of the relativistic diffusion under the hypotheses of Theorem \ref{theo.infhorEucli}.}\label{fig.3}
\end{figure}

\begin{rema}Compared to Theorem \ref{theo.poisson.Minko} illustrated in Figure \ref{fig.1} and where the relativistic diffusion is asymptotic to a random hyperplane with non-converging transversal fluctuations, under the hypotheses of Theorem  \ref{theo.infhorEucli}, it asymptotically describes a random line, more precisely a light-like geodesics encoded by the direction $\Theta_{\infty}$ and a point $\delta_{\infty} \in \mathbb R^d$. For a Robertson--Walker space-time of the form $\mathcal M=(0+\infty) \times_{\alpha} \mathbb R^d$ with $I(\alpha)=+\infty$, the causal boundary $\partial M$ coincides with the one of Minkowski space-time described above, namely it is still topologically a cone $\mathbb R^+ \times \mathbb S^{d-1}$ which can be interpreted as a set of hyperplane, not a set a lines. Therefore, the stochastic compactification of the base manifold given by the set of exit points of the relativistic diffusion is again richer than its purely geometric analogues.
\end{rema}

In the spherical case $M=\mathbb S^d$, and in infinite horizon i.e. $I(\alpha)=+\infty$, the causal boundary of the warped product $\mathcal M:=(0, +\infty) \times_{\alpha}  \mathbb S^d$ is reduced to a point, see \cite{flores}. Nevertheless, the Poisson boundary of the relativistic diffusion is non-trivial since the spherical projection of the process asymptotically describe a random big circle in $\mathbb S^d$. 

\begin{theo}\label{theo.infhorSpher}
Let $\mathcal M:=(0, +\infty) \times_{\alpha}  \mathbb S^d$ be a Robertson--Walker space-time such that $I(\alpha) = +\infty$. Let $(\xi_0, \dot{\xi}_0) \in T^1_+ \mathcal M$ and let  $(\xi_s, \dot{\xi}_s)_{s \geq 0}=(t_s,x_s, \dot{t}_s, \dot{x}_s)_{s \geq 0}$ be the relativistic diffusion in $T^1_+\mathcal M$ starting from $(\xi_0, \dot{\xi}_0)$. 
Then almost surely as $s$ goes to infinity, the process $(x_s)_{s \geq 0}$ asymptotically describes a random big circle in $\mathbb S^d$ generated by two orthogonal random vectors $(u_{\infty}, v_{\infty}) \in \mathbb S^d \times \mathbb S^d$  and the invariant sigma field of the full diffusion $\text{Inv}((\xi_s, \dot{\xi}_s)_{s \geq 0})$ coincides with $\sigma(u_{\infty},v_{\infty})$ up to negligeable sets. 
\end{theo}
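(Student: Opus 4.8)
The plan is to apply the d\'evissage method in its homogeneous form (Theorem~\ref{theo.homo}) with the rotation group $G=SO(d+1)$ of the fiber and $K=SO(d-1)$, so that $G/K$ is the Stiefel manifold $V_2(\mathbb R^{d+1})$ of orthonormal $2$-frames, which is exactly the unit tangent bundle $T^1\mathbb S^d$ of the fiber. The first step is to exhibit the skew-product structure of the relativistic diffusion. Writing a point of $T^1_+\mathcal M$ as $(t,x,\dot t,\dot x)$ and setting $r_s:=|\dot x_s|_h$ and $\Theta_s:=\dot x_s/r_s\in T^1_{x_s}\mathbb S^d$, the pseudo-norm relation~(\ref{eqn.pseudo}) reads $\dot t_s^2=1+\alpha^2(t_s)\,r_s^2$, so the whole radial information is carried by $X_s:=(t_s,\dot t_s)$. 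By the rotational symmetry of the metric $g=-dt^2+\alpha^2(t)h$, the system~(\ref{eqn.flj}) shows that $(X_s)_{s\ge0}$ is an autonomous subdiffusion, while the angular pair $(x_s,\Theta_s)$ lifts to an $SO(d+1)$-valued process $G_s$, realizing the full diffusion as $(X_s,G_s)\in X\times SO(d+1)$ with $X=(0,\infty)\times[1,\infty)$.

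Equivariance of $\mathcal L$ under the left action of $SO(d+1)$ (condition~3) is immediate from the invariance of $g$ and the Lorentz-covariance of the relativistic diffusion. The triviality of $\mathrm{Inv}((X_s))$ (condition~1) follows from the asymptotic analysis of~\cite{angst2016}: in the infinite-horizon regime $I(\alpha)=+\infty$ one has $t_s\to+\infty$ almost surely and $\dot t_s$ is attracted to the deterministic profile $\asymp 1/H(t_s)$, so that the one-dimensional radial motion is transient with a degenerate tail $\sigma$-field. Finally, the generator $\mathcal L$ is hypoelliptic, the vertical Laplacian and the geodesic vector field generating the whole tangent space by a H\"ormander bracket computation, whence the continuity of bounded harmonic functions (condition~4).

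The heart of the argument, and the main obstacle, is condition~2, namely the almost sure convergence of $G_s$. Care is needed because $x_s$ itself does \emph{not} converge: a change of variable $du=dt/\dot t$ shows that the cumulative fiber angle $\psi_s:=\int_0^s r_u\,du$ is comparable to $\int_0^{t_s}\alpha^{-1}(t)\,dt$, which diverges precisely because $I(\alpha)=+\infty$, so the projection winds infinitely often along the limiting great circle. The plan is therefore to de-rotate: writing $G_s=\widetilde G_s\,R(\psi_s)$, where $R(\psi_s)\in SO(d+1)$ is the rotation by angle $\psi_s$ in the first two coordinates and $\psi_s$ is the above $X$-measurable additive functional, one checks that the deterministic winding term cancels in the It\^o equation for $\widetilde G_s$, leaving dynamics driven only by the vertical noise. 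It then remains to show that $\widetilde G_s$ converges. For this I would derive from~(\ref{eqn.flj}) the equations for the oriented $2$-plane $P_s=\mathrm{span}(x_s,\Theta_s)\in\mathrm{Gr}_2^+(\mathbb R^{d+1})$ and for the residual phase, and prove that their transverse martingale parts have almost surely finite total quadratic variation while their drifts are absolutely integrable. Here the growth hypotheses on $\alpha$, through the control $r_s=\alpha^{-1}(t_s)\sqrt{\dot t_s^2-1}$ and the radial profile $\dot t_s\asymp 1/H(t_s)$, make the out-of-plane deflections summable, so that $P_s\to P_\infty$ and $\widetilde G_s\to G_\infty$ almost surely.

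Setting $Y_s:=\pi(G_s)$, the previous step yields $Y_s\to Y_\infty=(u_\infty,v_\infty)\in V_2(\mathbb R^{d+1})$. Since the de-rotation is performed on the right by the $X$-measurable $R(\psi_s)$, it commutes with the left $SO(d+1)$-action and preserves the $K$-equivariant skew-product structure required by Theorem~\ref{theo.homo}. Applying that theorem, and using that $\mathrm{Inv}((X_s))$ is trivial, we conclude that $\mathrm{Inv}((\xi_s,\dot\xi_s)_{s\ge0})$ coincides with $\sigma(Y_\infty)=\sigma(u_\infty,v_\infty)$ up to negligible sets. Geometrically, $(u_\infty,v_\infty)$ is an orthonormal $2$-frame whose span cuts out the limiting great circle on $\mathbb S^d$: the marked point $u_\infty$ records the asymptotic de-rotated phase and $v_\infty$ its direction, which is precisely the pointed great circle of the statement.
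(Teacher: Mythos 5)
Your overall architecture coincides with the paper's: lift the spatial components to a process $g_s\in\mathrm{SO}(d+1)$ with $x_s=g_s(e_0)$, $\dot x_s/|\dot x_s|=g_s(e_1)$, strip off the winding along the limiting great circle, and run the d\'evissage scheme over the temporal subdiffusion. The gap is at the de-rotation step, and it is not cosmetic. Your winding angle $\psi_s=\int_0^s|\dot x_u|\,du$ is exactly the clock $D_s=\int_0^s\sqrt{\dot t_u^2-1}\,/\alpha(t_u)\,du$, an additive functional of the temporal \emph{path}, not a function of the current state $(t_s,\dot t_s)$. As a consequence, the de-rotated process $(t_s,\dot t_s,\widetilde G_s)$ with $\widetilde G_s=G_s R(\psi_s)^{-1}$ is not a Markov diffusion: its stochastic differential involves $R(D_s)$, which cannot be recovered from the current state, so there is no infinitesimal generator on which the equivariance condition (condition 3 of the d\'evissage hypotheses) can even be formulated, and Theorems \ref{theo.trivial} and \ref{theo.homo} do not apply to it. Moreover, the path-space bijection $(X,G)\leftrightarrow(X,\widetilde G)$ does not commute with the shift operators $\theta_t$: de-rotating the shifted path differs from shifting the de-rotated path by the random right factor $R(D_t)$, so even if you had identified $\mathrm{Inv}((X_s,\widetilde G_s)_{s\geq0})$, this would not identify $\mathrm{Inv}((X_s,G_s)_{s\geq0})$, which is what the theorem asks for. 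Your last paragraph assumes both points when asserting that the de-rotation ``preserves the $K$-equivariant skew-product structure required by Theorem \ref{theo.homo}''. In short: the un-rotated pair $(X_s,G_s)$ satisfies the equivariance condition but fails the convergence condition, while your rotated pair $(X_s,\widetilde G_s)$ satisfies convergence but is no longer a diffusion; you need one object satisfying both at once.

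This is precisely the difficulty the paper flags and resolves: by Lemma 4.6 of \cite{angst2016}, $D_s-A(t_s)$ converges almost surely, where $A(t)=\int_{t_0}^{t}du/\alpha(u)$ is a function of the state $t$ alone; one therefore de-rotates by $\hat u_s=\exp(A(t_s)H_0)$ and sets $\hat b_s=g_s\hat u_s^{-1}$. Then $(t_s,\dot t_s,\hat b_s)$ is a genuine diffusion, in state-by-state (hence shift-compatible) bijection with $(t_s,\dot t_s,g_s)$, its generator depends on $(t,\dot t)$ only through coefficients multiplying left-invariant vector fields and is thus $\mathrm{SO}(d+1)$-equivariant, and $\hat b_s$ converges: indeed $b_s:=g_s\exp(-D_sH_0)$ (your $\widetilde G_s$) converges because its SDE has bounded coefficients and is driven by the clock $C_s$, finite a.s. here by Proposition \ref{myprop}, and $\hat b_s=b_s\exp\left((D_s-A(t_s))H_0\right)$. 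With this modification your scheme goes through exactly as in the paper. Two smaller points: your justification of condition 1 (``transient, hence degenerate tail $\sigma$-field'') is not valid reasoning---transience of a diffusion does not imply a trivial invariant field---and the correct reference is the coupling result of Proposition \ref{pro.temptrivial}; also $Y_s:=\pi(G_s)=(x_s,\Theta_s)$ does not converge, you mean $\pi(\widetilde G_s)$.
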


\begin{figure}[ht]
\begin{center}
\includegraphics[scale=0.5]{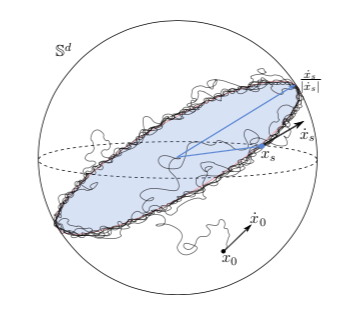}
\end{center}\label{fig.sphere}
\caption{Asymptotics of the relativistic diffusion in Robertson--Walker space-times with infinite horizon and spherical fiber.}
\end{figure}

\noindent
We conclude with the negatively curved case $M=\mathbb H^d$. 
\begin{theo}\label{theo.infhorHyp}
Let $\mathcal M:=(0, +\infty) \times_{\alpha} \mathbb H^d$ be a Robertson--Walker space-time such that $I(\alpha) = +\infty$. Let $(\xi_0, \dot{\xi}_0) \in T^1_+ \mathcal M$ and let  $(\xi_s, \dot{\xi}_s)_{s \geq 0}=(t_s,x_s, \dot{t}_s, \dot{x}_s)_{s \geq 0}$ be the relativistic diffusion in $T^1_+\mathcal M$ starting from $(\xi_0, \dot{\xi}_0)$. 
Then almost surely as $s$ goes to infinity, the process $(x_s)_{s \geq 0}$ goes to infinity along a random direction $\theta_{\infty} \in \mathbb S^{d-1} \sim \partial \mathbb H^d$. Moreover, the scalar process $(\delta_s)_{s \geq 0}$ defined by $\delta_s := \sinh^{-1}(|x_s|) - \int_1^{t_s} du /\alpha(u)$ converges almost surely to a random point $\delta_{\infty} \in \mathbb R$. The invariant sigma field of the full diffusion $\text{Inv}((\xi_s, \dot{\xi}_s)_{s \geq 0})$ coincides with $\sigma(\theta_{\infty},\delta_{\infty})$ up to negligeable sets. 
\end{theo}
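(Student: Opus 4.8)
The plan is to apply the dévissage method in its homogeneous-space version (Theorem \ref{theo.homo}), exploiting the fact that the fiber $\mathbb{H}^d$ is a symmetric space with transitive isometry group $G = \mathrm{SO}_0(1,d)$. Writing $\mathbb{H}^d = G/K$ with $K = \mathrm{SO}(d)$ and $T^1\mathbb{H}^d = G/M$ with $M = \mathrm{SO}(d-1)$, I would first split the state space $T^1_+\mathcal M$ into a \emph{cosmological} part $X_s := (t_s, \dot t_s)$ and the spatial configuration $(x_s, \dot x_s/|\dot x_s|_h) \in T^1\mathbb{H}^d$, the spatial speed $|\dot x_s|_h$ being recovered from $\dot t_s$ through the pseudo-norm relation \eqref{eqn.pseudo}, namely $\dot t_s^2 = 1 + \alpha^2(t_s)\,|\dot x_s|_h^2$. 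Because the warped-product Christoffel symbols of $g = -dt^2 + \alpha^2(t)h$ couple the fiber only through $\alpha(t_s)$, and because the $\mathbb{H}^d$-isometries act transitively and leave the diffusion covariant, the pair $(t_s,\dot t_s)$ is an autonomous subdiffusion and the generator $\mathcal L$ is equivariant under the left action of $G$ on the fiber factor (dévissage condition 3).

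Next I would establish the almost sure asymptotics, which are also the content of the first statements of the theorem. Introduce the conformal time $\eta_s := \int_1^{t_s} du/\alpha(u)$, which tends to $+\infty$ since $I(\alpha) = +\infty$. A study of the one-dimensional-type subdiffusion $(t_s,\dot t_s)$ gives $t_s \to +\infty$ together with sharp controls on $\dot t_s$; combined with \eqref{eqn.pseudo} these yield that the hyperbolic radius $\rho_s := \sinh^{-1}|x_s|$ escapes to $+\infty$ at the conformal rate. The heart of the a.s. part is to decompose $\delta_s = \rho_s - \eta_s$ as a martingale plus a finite-variation term and to show, using the $\log$-concavity of $\alpha$ (monotonicity of the Hubble function $H$) and the polynomial/sub-exponential/exponential growth trichotomy, that both the bracket of the martingale and the drift are integrable at infinity; this produces the a.s. limit $\delta_\infty$. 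The angular part $\Theta_s = x_s/|x_s| \in \mathbb{S}^{d-1}$ converges to $\theta_\infty$ because its driving martingale carries a factor $\sinh^{-2}\rho_s$ whose integral is finite once $\rho_s$ grows linearly in $s$, exactly as for the angular convergence of hyperbolic Brownian motion.

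For the identification of the Poisson boundary I would lift the fiber configuration to a $G$-valued process $G_s$ and, to compensate the deterministic escape to infinity, introduce the renormalized process $Y_s := G_s\, a_{-\eta_s}$, where $(a_\tau)_{\tau \in \mathbb{R}}$ is the one-parameter subgroup generating the geodesic flow (the $A$-part in an Iwasawa decomposition $G = KAN$). The crucial structural observation is that the fiber isometries act by \emph{left} multiplication while the renormalization $a_{-\eta_s}$ acts by \emph{right} multiplication, so the two commute and $Y_s$ still transforms equivariantly: condition 3 is preserved. The a.s. estimates of the previous step translate into the convergence $Y_s \to Y_\infty$ in $G$ (dévissage condition 2), the $K/M$-component of $Y_\infty$ recovering the boundary point $\theta_\infty \in \partial\mathbb{H}^d \simeq \mathbb{S}^{d-1}$ and its $A$-component recovering the Busemann offset $\delta_\infty$, so that $\sigma(Y_\infty) = \sigma(\theta_\infty,\delta_\infty)$. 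It then remains to check that $\mathrm{Inv}((t_s,\dot t_s))$ is trivial (condition 1), all the tail information migrating to the fiber, and that bounded $\mathcal L$-harmonic functions are continuous (condition 4), the latter following from the hypoellipticity of $\mathcal L$ via Hörmander's theorem. Theorem \ref{theo.homo} then yields $\mathrm{Inv}((\xi_s,\dot\xi_s)) = \mathrm{Inv}((t_s,\dot t_s)) \vee \sigma(Y_\infty) = \sigma(\theta_\infty,\delta_\infty)$, as claimed.

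I expect the main obstacle to be the construction and the proof of convergence of the renormalized group process $Y_s = G_s\, a_{-\eta_s}$, equivalently the a.s. convergence of $\delta_s = \rho_s - \eta_s$ together with the angular direction. The difficulty is that the time change $\eta_s$ is itself random, driven by the cosmological subdiffusion, so one must control the interaction between the fluctuations of $(t_s,\dot t_s)$ and those of the fiber motion, and show that the resulting martingale has a finite bracket at infinity and that all drift contributions are absolutely convergent. This is where the quantitative hypotheses on $\alpha$ ($\log$-concavity and the growth trichotomy) enter decisively, through sharp asymptotics of $\dot t_s$ and of the spatial speed $|\dot x_s|_h$.
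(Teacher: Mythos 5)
Your overall skeleton (isolating the temporal subdiffusion $(t_s,\dot t_s)$, invoking its Liouville property from Proposition \ref{pro.temptrivial}, lifting the spatial part to the isometry group $G=\mathrm{PSO}(1,d)$, checking left-equivariance and hypoellipticity, and concluding by d\'evissage) matches the paper's strategy. But the heart of your argument --- the claim that the renormalized lift $Y_s:=G_s\,a_{-\eta_s}$ converges almost surely in $G$ (d\'evissage condition 2) --- is false, and this is exactly the point where the hyperbolic case differs from the flat one. Right multiplication by $a_{-\eta_s}$ conjugates the driving left-invariant fields: one gets $dY_s=(\dot D_s-\dot\eta_s)\,Y_sH_1\,ds+\sigma\sqrt{\dot C_s}\,\sum_i Y_s\,\mathrm{Ad}(a_{\eta_s})V_i\circ dB_s^i$. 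The drift coefficient is indeed integrable (this is Lemma 4.6 of \cite{angst2016}), but $\mathrm{Ad}(a_{\eta_s})V_i=\cosh(\eta_s)V_i-\sinh(\eta_s)\,(e_0e_i^*+e_ie_0^*)$ grows like $e^{\eta_s}$, and under the standing hypotheses ($I(\alpha)=+\infty$ forces polynomial growth of $\alpha$) the conformal time $\eta_s\approx D_s$ grows \emph{exponentially} in $s$ by Proposition \ref{prop.asympdott} --- not linearly, as your second paragraph suggests. Hence $e^{\eta_s}$ is doubly exponential in $s$ while $\sqrt{\dot C_s}\approx 1/\dot t_s$ decays only exponentially, so the martingale part of $Y_s$ has divergent bracket. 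Geometrically, $Y_s$ is the frame flowed backward by the geodesic flow for the time $\eta_s$; in $\Hyp^d$ this flow expands transverse errors by $e^{\eta_s}$, whereas the angular alignment of the diffusion (the quantity $1-v_s$ in the paper's notation) is only exponentially small in $s$. Concretely, $Y_s(e_0)=\cosh(\eta_s)\,x_s-\sinh(\eta_s)\,\dot x_s/|\dot x_s|$, and in the paper's Iwasawa coordinates its $e_0$-component behaves like $\tfrac14 e^{\eta_s+\lambda_s}(1-c_s)$ with $c_s=e_1^*k_se_1\to e_1^*u_\infty e_1<1$ almost surely, so $Y_s$ blows up. Your intuition is imported from the flat case of Theorem \ref{theo.infhorEucli}, where the renormalization $\delta_s=x_s-\Theta_s\int^{t_s}du/\alpha(u)$ uses \emph{translations}, which do not expand transversally; it does not transfer to negative curvature.

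The paper's proof is built precisely to avoid this obstruction. It decomposes the lift in Iwasawa coordinates $g_s=n_sa_sk_s$ (nilpotent--abelian--compact) and renormalizes only where it is harmless: the nilpotent part $n_s$ converges as it stands and encodes $\theta_\infty$ through the horospherical parametrization $\theta_\infty=n_\infty(e_0+e_1)/(e_0^*n_\infty(e_0+e_1))$; the abelian part is renormalized additively, $\tilde\delta_s=\int_0^s\dot D_uv_u\,du-A(t_s)$; and the compact part $k_s$ is \emph{never} claimed to converge --- after twisting by an auxiliary rotation $u_s$ (convergent because $C_\infty<+\infty$), only the projection $\Theta_s=k_su_se_1\in\mathbb S^{d-1}$ converges, and moreover to the deterministic point $e_1$. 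This structure forces a three-fold iterated use of the d\'evissage theorems: first Theorem \ref{theo.homo} on $\mathbb S^{d-1}$, whose contribution is trivial precisely because $\Theta_\infty$ is deterministic; then Theorem \ref{theo.trivial} on $(\R,+)$ for $\tilde\delta_s$; then Theorem \ref{theo.trivial} on $N$ for $n_s$; followed by the identification of $(\tilde\delta_\infty,n_\infty)$ with $(\delta_\infty,\theta_\infty)$. So while your almost-sure statements ($\delta_s\to\delta_\infty$, angular convergence) are essentially the right ones, your single-shot d\'evissage with $Y_s=G_sa_{-\eta_s}$ cannot be repaired as stated: condition 2 fails for that process, and the boundary identification must be restructured along the lines above.
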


\begin{rema}
In this last case of a Robertson--Walker space-time with infinite horizon and hyperbolic fiber, the (future oriented part of the) causal boundary also identifies topologically with a cone of the type $\mathbb R^+ \times \mathbb S^{d-1}$, see \cite{flores}. Making the parallel with the case of a Euclidean fiber, the relativistic diffusion goes here to infinity in a the random direction $\theta_{\infty}$ along a random hypersurface (not an hyperplane as before) characterized by the scalar $\delta_{\infty} \geq 0$, see Figure  \ref{fig.hyper} below. As in the flat Minkowskian case, the support of the Poisson boundary thus coincides here again with the natural geometric compactification of the base manifold.
\end{rema}

\begin{figure}[ht]
\begin{center}
\includegraphics[scale=0.5]{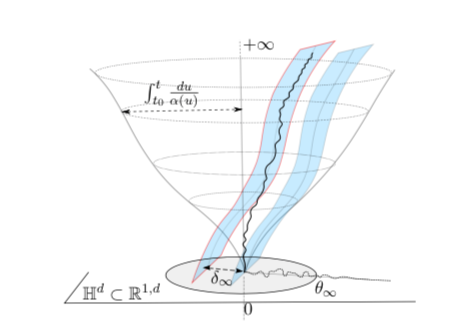}
\end{center}
\caption{Asymptotics of the relativistic diffusion in Robertson--Walker space-times with infinite horizon and hyperbolic fiber.}\label{fig.hyper}
\end{figure}

\newpage
\section{Proof of the results}\label{sec.proof}
We now give the proofs of the results stated in the last section. The next Section \ref{sec.model} is devoted to the proof of Theorem \ref{theo.poisson.dS} and \ref{theo.poisson.AdS} concerning the Poisson boundary of the relativistic diffusion in model space-times. The case of Robertson--Walker space-times is then treated in Section \ref{sec.RWglobal}.

\subsection{Poisson boundary in model manifolds}\label{sec.model} 

\label{sec.ads}

As mentioned in Section \ref{sec.statementmodel}, using elaborated coupling techniques, Bailleul first established Theorem \ref{theo.poisson.Minko} in \cite{ismael}, i.e. the fact that the support of the Poisson boundary of the relativistic diffusion in Minkowski space-time identifies with the causal / conformal boundary. Then using techniques from random walks on Lie groups, this result was recovered by Bailleul and Raugi  in \cite{bailleul_raugi}. As a first application of the d\'evissage method, we also gave a concise proof of this result in \cite{devissage}. The case of de Sitter space-time, i.e. Theorem \ref{theo.poisson.dS} above, appears as a particular case of the main result of \cite{angst1}. Indeed, it is well known that de Sitter space-time can be globally written as a Robertson--Walker space-time with a flat fiber and with  a torsion function with exponential growth, which is precisely the framework of the latter article. 
\par
\medskip
We are thus left with the proof of Theorem \ref{theo.poisson.AdS} concerning the Anti de Sitter space-time. The method we follow here will in fact provide a proof of both Theorems  \ref{theo.poisson.dS} and  \ref{theo.poisson.AdS} at the same time.
As the sphere and the hyperbolic space, the de Sitter $\dS$ and Anti de Sitter $\AdS$ space-times are homogenous spaces and their orthonormal frame bundle are identified with Lie groups which are respectively 
$\mathrm{PSO}(1,d+1)$ and $\mathrm{PSO}(2,d)$. The relativistic diffusion can naturally be lifted to a diffusion with values in the orthonormal frame bundle. Those Lie groups are semi-simple and the study of the asymptotic behavior of the (lifted) relativistic diffusion can be done using well known results on random walks on semi-simple Lie groups. The plan of the proof of Theorems \ref{theo.poisson.dS} and  \ref{theo.poisson.AdS} is then the following: in the next Section \ref{sec.lift}, we precise the geometric setting and the generator of the lifted diffusion process ; then in Section \ref{sec.iwa}, using suitable Iwasawa coordinates, we determine its almost sure asymptotics using standard results for Markov processes on Lie groups ; the geometric meaning of these convergence results is detailed in Section \ref{sec.geomeaning}. Finally, in Section \ref{sec.poisson.model}, we compute the Poisson boundary of the process using the d\'evissage method. 

\subsubsection{Lifting the relativistic diffusion} \label{sec.lift}

As recalled in section \ref{sec.background}, the de Sitter space-time $\dS$ is the unit sphere in $\R^{1,d+1}$ of the quadratic form $Q_{1,d+1}$. Let denote by $\mathrm{PSO}(1,d+1)$ the connected component of the identity of $\mathrm{SO}(1,d+1)$, the group of $Q_{1,d+1}$-isometries of determinant $1$. Then $\mathrm{PSO}(1,d+1)$ acts by isometry and transitively on $\dS$ which can be identified with the homogenous space $\mathrm{PSO}(1,d+1)/\mathrm{SO}(1,d)$, $\mathrm{SO}(1,d)$ being identified with the stabilizer of one point in $\dS$. Denote by $(e_0, e_1, \dots, e_{d+1})$ the canonical basis of $\R^{1, d+1}$, and so $Q_{1,d+1}(e_0)=-1$ and $Q_{1,d+1}(e_i)=1$ for $i \in \{1, \dots, d+1\}$. Define by $\tilde{\pi}$ the projection 
\[
\begin{matrix}
\tilde{\pi}: & \mathrm{PSO}(1,d+1) & \longrightarrow & \dS \\
& g & \longmapsto & g(e_1)
\end{matrix}
\]
i.e. the orthonormal frame bundle projection onto $\dS$. An element $g$ of $\mathrm{PSO}(1,d+1)$ provides $g(e_1) \in \dS$ and also an orthonormal basis $(g(e_0), g(e_2), \dots, g(e_{d+1}))$ of $T_{g(e_1)} \dS$.
Since $\mathrm{PSO}(1,d+1)$ acts transitively on the unitary tangent bundle $T^1_{+} \dS$, it can be  identified with the quotient $\mathrm{PSO}(1,d+1)/ \mathrm{SO}(d)$ via the projection $\pi$ defined by 
\[
\begin{matrix}
\pi : & \mathrm{PSO}(1,d+1) & \longrightarrow & T^{1}_{+} \dS \\
& g & \longmapsto & (g(e_0), g(e_1) ),
\end{matrix}
\]
the stabilizer of $(e_0,e_1)$ being isomorphic to $\mathrm{SO}(d)$. 
In the same manner, recall that the Anti de Sitter space-time $\AdS$ is a unit sphere in $\R^{2,d}$ for the quadratic form $Q_{2,d}$
\[
\AdS = \{ \xi \in \R^{d+1}, \quad Q_{2,d}(\xi)=-1 \},
\]
identified with the homogenous space $\mathrm{PSO}(2,d)/\mathrm{SO}(d)$ via the projection (denoted $\tilde{\pi}$ again) 
\[
\begin{matrix}
\tilde{\pi}: & \mathrm{PSO}(2,d) & \longrightarrow & \AdS \\
&g & \longmapsto & g(e_0).
\end{matrix}
\]
An element $g\in \mathrm{PSO}(2,d)$ then provides a point $g(e_0)$ in $\AdS$ 	as well as an orthonormal basis $(g(e_1), \dots, g(e_{d+1}))$ of $T_{g(e_0)}\AdS$. Moreover, the unitary tangent bundle $T^{1}_{+} \AdS$ is identified with the homogenous space $\mathrm{PSO}(2,d)/\mathrm{SO}(d)$ via the projection (also denoted by $\pi$)
\[
\begin{matrix}
\pi : & \mathrm{PSO}(2,d) & \longrightarrow & T^{1}_{+} \AdS \\
& g & \longmapsto & (g(e_0), g(e_1) ).
\end{matrix}
\]

In the two cases, $\dS$ and $\AdS$, the relativistic diffusion $(\xi_s, \dot{\xi}_s)_{s \geq 0}$ is thus the projection onto respectively $T^1_{+}\dS$ and $T^1_{+}\AdS$ of a left invariant diffusion $(g_s)_{s \geq 0}$ on a Lie group, respectively $\mathrm{PSO}(1,d+1)$ and $\mathrm{PSO}(2,d)$ whose generator $\tilde{\Op}$ takes the H\"ormander form
\[
\tilde{\Op}= \frac{\sigma^2}{2} \sum_{i=1}^d V_i^2 + H_0,
\]
$(V_i)_{i=1,\dots,d}$ and $H_0$ are left invariant vector fields (respectively vertical and horizontal relatively to the bundle projection $\tilde{\pi}$). Precisely, in the case of $\dS$ 
\[ 
H_0(g) = g(e_0 e_1^{*} + e_1 e_{0}^*), \quad V_i(g)= g (e_{i+1}e_0^{*} + e_0 e_{i+1}^*), \quad 1\leq i \leq d,
\]
and in the case of $\AdS$
\[
H_0(g)=g(e_1e_0^{*}-e_0e_{1}^{*}), \quad V_i(g)= g(e_{i+1}e_1^{*} + e_1 e_{i+1}^*), \quad 1\leq i \leq d.
\]
Saying that $(\dot{\xi}_s, \xi_s)_{t \geq0}$ is the projection by $\pi$ of $(g_s)_{s\geq0}$ is equivalent to the fact that, for every smooth function $f: T^1_{+}\mathcal{M} \to \R$ ($\mathcal{M}=\dS$ or $\AdS$) one has
\[
\tilde{\Op}(f\circ \pi) = \Op(f)\circ \pi.
\]
The asymptotic behavior and the Poisson boundary of the relativistic diffusion $(\xi_s, \dot{\xi}_s)_{s \geq 0}$ will be obtained from the one of $(g_s)_{s \geq 0}$. Note that both $\mathrm{PSO}(1,d+1)$ and $\mathrm{PSO}(2,d)$ are semi-simple Lie groups and moreover the left invariant diffusion $(g_s)_{s \geq 0}$ is hypoelliptic (satisfying H\"ormander condition). Thus, the asymptotic behavior of $(g_s)_{s \geq 0}$ can be made explicit using classical results on random walks and Markov processes (\cite{harry1}, \cite{vircer} \cite{raugi77}, \cite{liao}) on semi-simple Lie groups.

\subsubsection{Asymptotics of the lifted diffusion in Iwasawa coordinates} \label{sec.iwa}
Let us for the moment denote by $G$ for $\mathrm{PSO}(1,d+1)$ or $\mathrm{PSO}(2,d)$ and choose an Iwasawa decomposition $G=NAK$ (and $\mathrm{Lie}(G)= \mathcal{N}\oplus \mathcal{A} \oplus \mathcal{K}$).  Recall that $K$ is a maximal compact subgroup (isomorphic to $\mathrm{SO}(d+1)$ in the case $\mathrm{PSO}(1,d+1)$, and to $(\mathrm{O}(2) \times \mathrm{O}(d) )\cap \mathrm{SL}(d+2)$ in the case of $\mathrm{PSO}(2,d)$), $A$ is an abelian Lie group (of dimension $1$ in the case $\mathrm{PSO}(1,d+1)$ or $2$ in the case $\mathrm{PSO}(2,d)$) and $N$ has a nilpotent Lie algebra $\mathcal{N}$. Then decomposing $g_s$ in Iwasawa coordinates $g_s=n_s a_s k_s $ one observes that $(k_s)_{s \geq 0}$ and $(a_s, k_s)_{s \geq 0}$ are diffusions respectively on $K$ and $A \times K$. Precisely, writing explicitly the dynamics of the diffusion in Iwasawa coordinates, see e.g. \cite{liao}, if $\{\cdot \}_{\mathcal{K}}, \{\cdot \}_{\mathcal{A}}, \{\cdot \}_{\mathcal{N}}$ denotes the Iwasawa projections of $\mathrm{Lie}(G)$ onto $\mathcal{K},\mathcal{A},\mathcal{N}$, we get
\begin{eqnarray}
dk_s &=&  \displaystyle{\sigma \sum_{i=1}^d \{ \mathrm{Ad}(k_s) V_i  \}_{\mathcal{K}} k_s \circ dB^{i}_{s} + \{ \mathrm{Ad}(k_s) H_0 \}_{\mathcal{K}} k_s ds}, \label{eq.iwa.k} \\  \nonumber
\\ 
da_s &=& \displaystyle{ \sigma \sum_{i=1}^d a_s \{  \mathrm{Ad}( k_s) V_i \}_{\mathcal{A}}  \circ dB_{s}^{i} + a_s \{ \mathrm{Ad}(k_s) H_0 \}_{\mathcal{A}} ds, } \label{eq.iwa.a}\\  \nonumber
\\
dn_s &=&  \displaystyle{\sigma \sum_{i=1}^d n_s \mathrm{Ad}(a_s) \{ \mathrm{Ad}(k_s) V_i  \}_{\mathcal{N}} \circ dB_s^{i} + n_s \mathrm{Ad}(a_s) \{ \mathrm{Ad}(k_s) H_0 \}_{\mathcal{N}} ds.}\label{eq.iwa.n}
\end{eqnarray}
In particular, from Equations \eqref{eq.iwa.k}, \eqref{eq.iwa.a}, \eqref{eq.iwa.n}, one can easily check that the processes $(k_s)_{s \geq 0}$ and $(k_s,a_s)_{s \geq 0}$ are subdiffusions of the full process $(k_s,a_s,n_s)_{s \geq 0}$.
In our particular cases, since the projection of $(g_s)_{s\geq0}$ onto the unitary fiber bundle identified with $G/\mathrm{SO}(d)$ is a diffusion, it follows that the projection $(\theta_s)_{s \geq 0}$ of $(k_s)_{s\geq 0}$ onto $K/\mathrm{SO}(d)$ (identified with  $\mathbb{S}^{d}$ for $\dS$ and $\mathbb{S}^1$ for $\AdS$) is itself a diffusion process, i.e. a Markov process, whose dynamics is explicit. 
Precisely in the Anti de Sitter case, using the explicit Iwasawa decomposition of $\mathrm{PSO}(2,d)$ detailed in the next Section \ref{sec.geomeaning} and denoting by $\theta_s \in \mathbb{S}^{1}$ the element of the circle defined by the relations $k_s(e_0)= \cos(\theta_s) e_0 + \sin(\theta_s) e_1$ and $k_s(e_1)= -\sin(\theta_s) e_0 + \cos(\theta_s) e_1$, the SDE solved by $(k_s)_{s\geq0}$ provides the following explicit dynamics of $(\theta_s)_{s\geq 0}$ 
\begin{equation}\label{eq.theta1}
d\theta_s = \sigma \cos(\theta_s) dW_s + \left ( 1 + \frac{\sigma^2 (d-2)}{4} \sin(2 \theta_s)   \right ) ds,
\end{equation}
where $(W_s)_{s\geq0}$ is an usual Brownian motion. 
In the case of de Sitter space-time, considering the explicit Iwasawa decomposition of $\mathrm{PSO}(1,d+1)$ also given in Section \ref{sec.geomeaning} below and denoting $\theta_s := k_s(e_1) \in \mathbb{S}^{d}$, one gets the following dynamics of $(\theta_s)_{s\geq0}$ obtained from the SDE solved by $(k_s)_{s\geq0}$,
\begin{equation}\label{eq.theta2}
d \theta_s = -\sigma (e_1^{*} \theta_s) dM^{\theta}_s - \frac{\sigma^2}{2} \big [ (d-2)(e_1^* \theta_s)e_1 + 2 (e_1^* \theta_s)^2 \theta_s \big ]ds+ (e_1 \theta_s^* -\theta_s e_1^* )\theta_s ds,
\end{equation}
where the martingale term is $dM^{\theta}_s := \sum_{i=1}^d k_s(e_{i+1}) dB^{i}_s$ and has covariation bracket given by $(\mathrm{Id}-\theta_s^{*}\theta_s)ds$. 
Using classical results on finite dimensional diffusion processes, one can then easily get that $(\theta_s)_{s\geq 0}$ admits a unique invariant probability measure on $K/\mathrm{SO}(d)$ and is hence ergodic. In particular, the Poisson boundary $\mathrm{Inv}((\theta_s)_{s \geq 0})$ is trivial.
\par
\medskip
Let us now concentrate on the diffusion process $(k_s, a_s)_{s\geq 0}$. In the de Sitter case, the process $(a_s)_{s \geq 0}$ takes values in $\mathbb R$. Starting from Equation \eqref{eq.iwa.a} and using It\^o formula, the Doob--Meyer decomposition of $\log a_s$ is the sum of a martingale term and an additive functional of $(\theta_s)_{s \geq0}$. By the ergodic theorem, one then deduces that $\frac{1}{s} \log a_s$ converges almost-surely as $s$ goes to infinity to some deterministic constant $\eta_{\infty} \in \mathbb R$. In the same manner, in the Anti de Sitter case,  it is easily seen that the matrix logarithm $\frac{1}{s} \log a_s$ converges almost surely to some deterministic limit  $\eta_{\infty}$.
A fundamental and well known fact is that, in both cases, the limit $\eta_{\infty}$ then necessarily belongs to the interior of the Weyl chamber corresponding to the Iwasawa decomposition chosen. Indeed, this result is classical for random walks generated by a density measure, see e.g. the standard references \cite{harry1},\cite{vircer},\cite{guivarc'h_raugi} or \cite{liao}. Although we are working here with diffusion processes, the study can naturally be reduced to the ``random walk'' framework by discretizing the time, with of course no effect on the fact that the limit $\eta_{\infty}$ is in the interior of the Weyl chamber.

\begin{rema}
In the case of de Sitter space-time, due to the symmetry of Equation \eqref{eq.theta2}, the invariant measure of $(\theta_s)$ is explicit and the positivity of the limit $\eta_{\infty}=\lim_{s \to +\infty} \frac{1}{s} \log a_s$ can be checked easily by an explicit and elementary computation.  In the Anti de Sitter case however, although the invariant measure of $(\theta_s)$  can also be made explicit starting from Equation \eqref{eq.theta1}, computations are much more involved, so that the fact that the limit $\eta_{\infty}$ belongs to the interior of the Weyl chamber seems hard to check using only elementary methods.
\end{rema}

Recall that, by definition, an element $a\in A^{+}$ of the Weyl chamber contracts, by adjonction, an element $Y$ of the nilpotent algebra $\mathcal{N}$, namely
\[
\mathrm{Ad}(a)(Y)= \sum_{\phi \in \Sigma_{+}} e^{-\phi(\log a)} Y_{\phi},
\] 
where $\Sigma_{+}$ is the set of positive roots. From this contraction property and from the fact that $\frac{1}{s}\log a_s$ converges almost surely to an element $\eta_{\infty}$ of the Weyl chamber, one then deduces that the dynamics of $n_s$ is contracting exponentially fast and so that $n_s$ converges almost surely to an asymptotic random variable $n_{\infty}$ with values in $N$. 
The next Proposition sums up the almost sure long time asymptotic behavior of the relativistic diffusion in de Sitter and anti de Sitter space-times, seen in Iwasawa coordinates. 

\begin{prop}\label{prop.sumup}
The relativistic diffusion $(\xi_s, \dot{\xi})_{s\geq 0}$ in $\dS$ or $\AdS$, written in Iwasawa coordinates, corresponds bijectively to the diffusion $ (n_s,a_s, \theta_s)_{s\geq 0} \in N \times A \times K/\mathrm{SO}(d)$. 
\begin{itemize}
\item The process $(\theta_s)_{s\geq0}$ is an ergodic subdiffusion, taking values in $\mathbb{S}^d$  in the case of $\dS$,  and in $\mathbb{S}^{1}$ in the case of $\AdS$. 
\item The process $(\theta_s, a_s)_{s\geq0}$ is also a subdiffusion, where $(a_s)_{s\geq0}$ takes values in $\mathcal{A}$ identified with $\R$ for $\dS$ or $\R^2$ for $\AdS$. As $s$ goes to infinity, $\frac{1}{s}\log a_s$ converges almost surely to a deterministic point $\eta_{\infty}$ of the Weyl chamber $\mathcal{A}^{+}$.
\item The process $(n_s)_{s\geq0}$, with values in $N$ identified to $\R^{d}$ for $\dS$ or $\R^{2d-2}$ for $\AdS$, converges almost surely as $s$ goes to infinity to an asymptotic random variable $n_\infty \in N$. 
\end{itemize}
\end{prop}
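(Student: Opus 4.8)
The plan is to assemble the three claims by reading off the autonomous structure of the Iwasawa coordinates, establishing ergodicity of the angular component, and then propagating the resulting law of large numbers up through the abelian and nilpotent parts. First I would justify the subdiffusion assertions directly from \eqref{eq.iwa.k}, \eqref{eq.iwa.a} and \eqref{eq.iwa.n}. The coefficients of the $K$-equation \eqref{eq.iwa.k} depend only on $k_s$, so $(k_s)_{s\geq 0}$ is autonomous; adjoining \eqref{eq.iwa.a}, whose coefficients involve only $(k_s,a_s)$, shows that $(k_s,a_s)_{s\geq 0}$ is autonomous as well. Since these dynamics are equivariant under the right $\mathrm{SO}(d)$-action, the projection $\theta_s$ of $k_s$ onto $K/\mathrm{SO}(d)$ is itself a diffusion, with explicit generator given by \eqref{eq.theta1} in the Anti de Sitter case and \eqref{eq.theta2} in the de Sitter case, and likewise $(\theta_s,a_s)_{s\geq 0}$ is a subdiffusion. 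Each process $(\theta_s)$ lives on a compact manifold ($\Sbf^1$, resp. $\Sbf^d$) and is hypoelliptic (its generator satisfies H\"ormander's condition), so by the standard theory of hypoelliptic diffusions on compact manifolds it admits a unique invariant probability measure $\nu$ and is ergodic; in particular $\mathrm{Inv}((\theta_s)_{s\geq 0})$ is trivial.

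Next I would treat the abelian part. Applying It\^o's formula to $\log a_s$ (the well-defined logarithm $A\to\mathcal{A}$, two-dimensional in the Anti de Sitter case) in \eqref{eq.iwa.a} yields a Doob--Meyer decomposition in which the bounded-variation part is an additive functional $\int_0^s F(\theta_u)\,du$ of the ergodic process $(\theta_u)$, while the local-martingale part has bracket controlled by the same kind of additive functional. The ergodic theorem for $(\theta_s)$ then gives $\frac{1}{s}\int_0^s F(\theta_u)\,du \to \int F\,d\nu$ almost surely, and the martingale part divided by $s$ vanishes almost surely by the strong law of large numbers for continuous local martingales. Hence $\frac{1}{s}\log a_s$ converges almost surely to a deterministic limit $\eta_\infty \in \mathcal{A}$.

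Finally I would establish that $\eta_\infty$ lies in the interior of the Weyl chamber $\mathcal{A}^+$ and deduce the convergence of $n_s$. The positivity of $\eta_\infty$ is the genuine obstacle: it is not delivered by the ergodic argument but by the general theory of the radial drift of random walks on semisimple groups. I would discretize time, replacing $(g_s)$ by the random walk $(g_n)_{n\in\N}$ driven by i.i.d.\ increments with an absolutely continuous law (a consequence of hypoellipticity), and invoke the classical results of \cite{harry1}, \cite{guivarc'h_raugi} and \cite{liao} guaranteeing that the Lyapunov vector lies in the open chamber; the discretization affects neither the existence nor the value of the limit. Granting $\phi(\eta_\infty)>0$ for every positive root $\phi\in\Sigma_{+}$, the contraction identity $\mathrm{Ad}(a_s)Y=\sum_{\phi\in\Sigma_{+}} e^{-\phi(\log a_s)}Y_\phi$ shows that the coefficients appearing in \eqref{eq.iwa.n} decay like $e^{-s\,\phi(\eta_\infty)}$. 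The drift and the quadratic variation of the $n_s$-equation are therefore almost surely integrable in $s$, so $n_s$ converges almost surely to a random limit $n_\infty\in N$.

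The main difficulty throughout is exactly this Weyl-chamber positivity, which I would import from the random-walk literature rather than prove by hand; this is especially delicate in the Anti de Sitter case, where, in contrast to the de Sitter case, the invariant measure $\nu$ of $(\theta_s)$ is not amenable to a closed-form computation that would make the sign of $\eta_\infty$ directly verifiable.
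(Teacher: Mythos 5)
Your proposal is correct and follows essentially the same route as the paper: the subdiffusion structure is read off the Iwasawa SDEs \eqref{eq.iwa.k}--\eqref{eq.iwa.n}, ergodicity of $(\theta_s)$ gives the law of large numbers for $\log a_s$ via the ergodic theorem, the Weyl-chamber positivity of $\eta_\infty$ is imported from the random-walk literature (\cite{harry1}, \cite{guivarc'h_raugi}, \cite{liao}) through time discretization, and the adjoint contraction property then yields the almost sure convergence of $n_s$. Even your closing observation mirrors the paper's own remark that the positivity of $\eta_\infty$ is elementary to check in the de Sitter case but resists explicit computation in the Anti de Sitter case.
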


\subsubsection{Geometric identification of the boundary} \label{sec.geomeaning}
Let us now describe with more geometrical details the almost sure asymptotic behavior of the relativistic diffusion $(\dot{\xi}_{s}, \xi_s)_{s\geq0}$ in $\dS$ and $\AdS$ space-times.

\paragraph{Asymptotics in de Sitter space-time.}The de Sitter space-time is a double cover of an open set of the projective space $\PR$ via the map 
\[
\begin{matrix}
p:&\dS & \longrightarrow & \PR \\
&\xi & \longmapsto & \mathrm{vect}(\xi).
\end{matrix}
\]
In the affine chart given by the hyperplan $\{\xi \in \R^{1,d+1}, \xi_0=1 \}$ the image of the isotropy cone of $Q_{1,d+1}$ is a sphere and the image of $\dS$ correspond to the exterior of that sphere which we denote by $\partial \dS$ and corresponds to the causal and conformal boundary of $\dS$. The Lie algebra $o(1,d+1)$ of $\mathrm{PSO}(1,d+1)$ is by definition 
\[
o(1,d+1) = \left \{   \left (\begin{matrix} 0 & u^* \\ u & A   \end{matrix} \right ) , \  u \in \R^{d+1}, \ A \in \R^{(d+1)\times(d+1)}, \ A^{*}=-A  \right \}
\]
and the Iwasawa decomposition $o(1,d+1)= \mathcal{N}\oplus \mathcal{A} \oplus \mathcal{K}$ we choose is precisely
\begin{align*}
\mathcal{A}= &\left \{ \left (\begin{matrix} 0 & \beta & 0 \\ \beta & 0 & 0 \\ 0 & 0 & 0  \end{matrix} \right), \beta \in \R  \right \}, \ \ 
 \mathcal{N}= \left \{ \left (\begin{matrix} 0 & 0 & h^{*} \\ 0  & 0 & -h^{*} \\ h & h &  0 \end{matrix} \right), h \in  \R^d \right \}, \\
& \  \mathrm{and} \quad  \mathcal{K}= \left \{   \left( \begin{matrix} 0 & 0  \\ 0 & A  \end{matrix} \right), \ A \in \R^{(d+1)\times (d+1)} ,\ A^{*}=-A  \right \}.  
\end{align*}
Thus the Iwasawa decomposition of $g_s$ writes explicitely
\[
g_s = n_s a_s k_s = \exp \left ( \begin{matrix} 0 & 0 & h_s^{*} \\ 0  & 0 & -h_s^{*} \\ h_s & h_s &  0  \end{matrix} \right ) \left ( \begin{matrix}  \cosh(\beta_s) & \sinh(\beta_s) & 0 \\ \sinh(\beta_s) & \cosh(\beta_s) & 0 \\ 0 & 0 & \mathrm{I}_d \end{matrix} \right ) \left ( \begin{matrix}1 &0 \\ 0& R_s \end{matrix}\right)
\]
where $R_s \in \mathrm{SO}(d+1)$. 
Then, we get that $(\dot{\xi}_s, \xi_s):= \pi(g_s)=(g_s(e_0), g_s(e_1) )=(n_s a_s (e_0), n_s a_s \theta_s) $, where by definition $\theta_s:= k_s(e_1)$ corresponds to the first column of $R_s$ and belongs to the set $\{\xi \in \R^{1,d+1}, \xi^0 =0 \ \mathrm{and} \ Q_{1,d+1}(\xi)=1 \} \simeq \mathbb{S}^{d} $. From the above Proposition \ref{prop.sumup}, we know that almost surely $\beta_s$ goes to infinity with $s$ and since $\theta_s$ is bounded, we get
\begin{align*}
Q_{1,d+1}(n_s(e_0+e_1), \xi_s) &= Q_{1, d+1}(n_s(e_0+e_1), n_s a_s \theta_s) \\
&=Q_{1,d+1}(a_{s}^{-1}(e_0+e_1), \theta_s )  \\
&= e^{-\beta_s} Q_{1,d+1}(e_0+e_1, \theta_s) \underset{t \to +\infty}{\longrightarrow} 0.
\end{align*}
From a geometrical point of view, this means that in the projective space, all adherent point of $p(\xi_s)$ belongs to the set of points in $\PR$ which are $Q_{1,d+1}$-orthogonal to the limit point $p(n_\infty(e_0+e_1))$. This set is the tangent space of the sphere $\partial \dS$ at the point $p(n_{\infty}(e_0+ e_1) )$. But by definition $p(\xi_s)$ is a causal curve and converges to a point of the causal boundary which is also $\partial \dS$. This implies necessary that $p(\xi_s)$ converges to $p(n_{\infty}(e_0+e_1))$. Note that one can also recover this result writing $\theta_s= \sum_{i=1}^{d+1} \theta_s^{i} e_i$ so that
\[
\xi_s = \theta^{1}_s n_s \left ( \sinh(\beta_s) e_0 + \cosh(\beta_s) e_1 \right ) + \sum_{i=2}^{d+1} \theta_s^{i} n_t(e_i).
\]
In \cite{camille}, it is shown that $(\theta^{1}_s)_{s\geq0}$ is a diffusion process taking values in $[-1,1]$ which stays eventually in $]0,1[$, implying that, for $s$ large enough, we have $\theta_s^{1}\neq0$. Since $\beta_s$ goes to infinity with $s$, one gets
\[
\left (\theta^{1}_s e^{\beta_s}\right )^{-1}\xi_s \underset{s \to +\infty}{\longrightarrow} n_{\infty}(e_0+e_1), \;\; \text{and thus} \;\;
p(\xi_s)= p\left (\left (\theta^{1}_s e^{\beta_s}\right )^{-1} \xi_s\right ) \underset{s\to +\infty}{\longrightarrow} p( n_{\infty}(e_0+e_1) ).
\]
Let now describe the asympotic behavior of $(\dot{\xi}_{s})_{s\geq0}$. We have $\dot{\xi}_s= n_s (\cosh(\beta_s)e_0+\sinh(\beta_s)e_1)$ and so $e^{-\beta_s}\dot{\xi}_s$ converges to $n_{\infty}(e_0 +e_1)$. Thus 
\[
p(\dot{\xi}_s)= p(e^{-\beta_s}\dot{\xi}_s)  \underset{s \to +\infty}{\longrightarrow}  p(n_{\infty}(e_0 +e_1)).
\]
Geometrically, the almost sure asymptotic behavior of $(\xi_s, \dot{\xi}_{s})_{s\geq0}$ in the projective space in thus the one described in Figure \ref{fig.dS3} below.
\begin{figure}[ht]
\begin{center}
\includegraphics[scale=0.5]{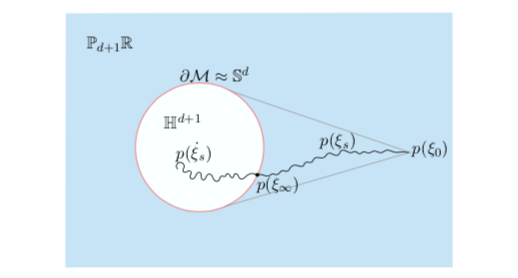}
\end{center}
\caption{Asymptotics of the relativistic diffusion under the hypotheses of Theorem \ref{theo.infhorEucli}.}\label{fig.dS3}
\end{figure}

\begin{rema}\label{rem.dS}
For every $\Theta \in \partial \dS$ which is different to $p(e_0-e_1)$ there is a unique $n\in N$ such that $\theta = p(n(e_0+e_1))$. Thus, Theorem \ref{theo.poisson.dS} can be rephrased saying that the invariant $\sigma$ field of $(\dot{\xi}_s, \xi_s)_{s\geq0}$ is generated by the asymptotic point $\Theta_{\infty}:=p(n_{\infty}(e_0+e_1))$ belonging to the causal boundary of $\dS$.
\end{rema}

\paragraph{Asymptotics in Anti de Sitter space-time.}  Recall that by definition, the Anti de Sitter space-time is the algebraic submanifold  $\AdS= \{ \xi \in \R^{2,d}, \ Q_{2,d}(\xi)=-1 \}$ and let us denote by $\Ein:=\{ \xi \in \R^{2,d}, \ Q_{2,d}(\xi)=0 \}$ which inherits from $Q_{2,d}$ a flat conformal Lorentzian structure and is called the Einstein Lorentz manifold. Denote by $p:\R^{2,d}\setminus \{0\} \to \PR$ the projection onto the projective space. Then $p(\AdS)$ is an open set whose boundary is $p(\Ein)$, and seen in the affine chart $\{\xi \in \R^{2,d}, \xi^{0}=1 \}$, $p(\AdS)$ is the interior of a one-sheeted hyperboloid corresponding to $p(\Ein)$. 
The Lie algebra $o(2,d)$ of $\mathrm{PSO}(2,d)$ is the set of matrices
\[
o(2,d)= \left \{ \left ( \begin{matrix}  0 & a & b^* \\ -a & 0 & c^* \\ b & c & A \end{matrix}  \right ), \ a \in \R, \ b,c \in \R^d , \ A\in \R^{d\times d}, \ A^{*}=-A \right \},
\]
and the Iwasawa decomposition $o(2,d)=\mathcal{N}\oplus \mathcal{A} \oplus \mathcal{K}$ we choose is given by 
\[
\mathcal{K}= \left \{ \left ( \begin{matrix} 0 & -\theta & 0 \\ \theta & 0 & 0 \\ 0 &0 & A \end{matrix} \right ), \  A\in \R^{d\times d}, \ A^{*}=-A, \ \theta \in \R  \right \}, \ 
\] 
\[
\mathcal{A}= \left \{ \left (\begin{array}{cc|ccc} 0 & 0 & \lambda & 0 & 0 \\ 0 & 0 & 0 & \mu & 0 \\ \hline \lambda & 0 & 0 & 0 & 0 \\ 0 & \mu & 0 & 0 & 0 \\ 0 & 0 & 0 & 0 & 0 \end{array} \right) \lambda, \mu \in \R \right  \},
\]
and the nilpotent Lie algebra
\[
\mathcal{N}= \left  \{ \left (\begin{array}{cc|ccc} 0 & -(x+y) & 0 & x-y & h^* \\ x+y & 0 & x+y & 0 & \tilde{h}^* \\ \hline 0 & x+y & 0 & -(x-y) & -h^* \\ x-y & 0 & x-y & 0 & -\tilde{h}^{*} \\ h & \tilde{h} & h & \tilde{h} & 0 \end{array} \right), \  x,y \in \R, \  h,\tilde{h} \in \R^{d-2} \right \}.
\]
The Weyl chamber is the set of elements of $\mathcal{A}$ such that $\lambda >0$, $\mu >0$ and $\lambda-\mu >0$.
Remark that $\dim \mathcal{A} = 2$, $\dim \mathcal{K}=1+ \frac{d(d-1)}{2}$ and $\dim \mathcal{N}=2d-2$. Denote by $N$, $A$ and $K$ the Lie groups associated so that $\mathrm{PSO}(2,d)= NAK$. Since $T^1_{+}\AdS$ is identified with $\mathrm{PSO}(2,d)/\mathrm{SO}(d)$ one has the Iwasawa diffeomorphism
\[
\begin{matrix}
N \times A \times \mathbb{S}^1 & \longrightarrow & T^1 \AdS \\ 
(n,a,\theta) & \longmapsto & \left (na(\cos(\theta)e_0 + \sin(\theta)e_1),  na ( -\sin(\theta)e_0 + \cos(\theta)e_1) \right) . 
\end{matrix}
\]
Thus in Iwasawa coordinates the relativistic diffusion writes 
\begin{align*}
\xi_s= n_s a_s (\cos(\theta_s)e_0 + \sin(\theta_s)e_1), \quad \dot{\xi}_s= n_s a_s ( -\sin(\theta_s)e_0 + \cos(\theta_s)e_1).
\end{align*}
Define the processes $(\lambda_s)_{s\geq0}$ and $(\mu_s)_{s\geq0}$ so that 
\[
a_s= \exp\left (\lambda_s (e_2e_0^*+e_0e_2^*) + \mu_s (e_3e_0^*+ e_0e_3^*)\right), \;\; \forall \; s\geq 0.
\] 
By Proposition \ref{prop.sumup}, since $\log(a_s)/s$ converges to a deterministic element of the interior of the Weyl chamber, one deduces that $\lambda_s/s\to \lambda_{\infty}>0$, $\mu_s/s\to \mu_{\infty}>0$ and $\lambda_{\infty} > \mu_{\infty}$.
Let us now define
\[
u_s:= e^{-\lambda_s}n_s a_s (e_0), \quad v_s:= e^{-\mu_s}n_s a_s (e_1),
\]
so that we have 
\[
\xi_s= \cos(\theta_s)e^{\lambda_s} u_t + \sin(\theta_s)e^{\mu_s} v_s, \quad \text{and} \quad \dot{\xi}_s= -\sin(\theta_s)e^{\lambda_s}u_s + \cos(\theta_s)e^{\mu_s}v_s. 
\]
By Proposition \ref{prop.sumup} again, we have then almost surely
\[
u_s \underset{s \to +\infty}{\longrightarrow} n_\infty (e_0+e_2), \quad 
v_s \underset{s \to +\infty}{\longrightarrow} n_{\infty}(e_1+e_3). 
\]
Thus, in the manifold $\R^{2,d}$, the plan $\mathrm{vect}(\dot{\xi}_s,\xi_s)=\mathrm{vect}(u_s,v_s)$ converges to the asymptotic plan $\mathrm{vect}(n_\infty (e_0+e_2), n_{\infty}(e_1+e_3))$ belonging to the isotropy cone of $Q_{2,d}$.  Choosing $(u_s,v_s)$ as a fixed basis of $\mathrm{vect}(\dot{\xi}_s,\xi_s)$, the two points $\xi_s$ and $\dot{\xi}_{s}$ describe spirals and go to infinity faster in the direction given by $u_s$ (since $\lambda_s-\mu_s$ goes to infinity with $s$).
Looking in the projective space $\PR$ the line $p(\mathrm{vect}(\xi_s, \dot{\xi_s}))=p(\mathrm{vect}(u_s,v_s))$ then converges as $s$ goes to infinity to the asymptotic line $\ell_\infty:=p(\mathrm{vect}(n_\infty(e_0+e_2), n_\infty(e_1+e_3)))$ which belongs to $\Ein$ and is a \emph{light circle} (more precisely it is a light-like geodesic belonging to a light cone in $\Ein$). 
\par
\bigskip
One can easily check that $n_\infty$ is determinated by $n_\infty (e_0+e_2)$ and $n_{\infty}(e_1+e_3)$. In fact, given a point $\eta \in \Ein$ and a light-like geodesic $\ell \subset \Ein$ passing by $\eta$ such that $\ell$ does not belong to the light cone of $\Ein$ with apex $p(e_0-e_2)$, there is a unique $n\in N$ such that $p(\eta)=p(n(e_0+e_2))$ and $\ell= p(\mathrm{vect}(n(e_0+e_2), n(e_1+e_3)))$. This means that $N$ is diffeomorphic to the set of pointed light circle in $p(\Ein)$, which does not belong to the light cone with apex $p(e_0-e_1)$. Thus the set of exit points of the relativistic diffusion in $\AdS$, encoded by $n_{\infty}$, is the pointed light circle $(p_\infty, \ell_\infty)$, given by $p_{\infty}:= p(n_{\infty}(e_0+e_2))$ and $\ell_{\infty}:= p(\mathrm{vect}(n_{\infty}(e_0+e_2), n_{\infty}(e_1+e_3) ))$. Therefore, even if $p(\dot{\xi}_s)$ and $p(\xi_s)$ are asymptotic to the random light circle $\ell_{\infty}$, the relativistic diffusion carries an extra invariant information given by the point $p_{\infty} \in \ell_{\infty}$.

\subsubsection{Expliciting the Poisson boundary} \label{sec.poisson.model}
With the help of the d\'evissage method, more precisely with the help of Theorem \ref{theo.trivial} of Section \ref{sec.devissage}, we are now in position to make explicit the Poisson boundary of the relativistic Brownian motion in de Sitter and Anti de Sitter space-times. The starting point is Proposition \ref{prop.sumup} describing the long time asymptotic behavior of the diffusion in Iwasawa coordinates. Namely,  we have seen in Section \ref{sec.iwa} that $X_s:=(a_s,\theta_s)_{s\geq 0}$ is a subdiffusion of $(X_s,Y_s)_{s\geq 0}:=(a_s,\theta_s, n_s)_{s\geq0}$ ; moreover, by Equation \eqref{eq.iwa.n}, the dynamics of the process $Y_s:=n_s$ depends only of $(a_s,\theta_s)_{s\geq 0}$ so that the equivariance condition is satisfied (the group $N$ acting on itself by translation) and it converges almost surely as $s$ goes to infinity to $Y_{\infty}=n_{\infty}$. Last, the generator of the relativistic diffusion being hypoelliptic, harmonic functions are smooth. Therefore, all the d\'evissage conditions are fulfilled and 
by Theorem \ref{theo.trivial}, we get that $\mathrm{Inv}((a_s,\theta_s, n_s)_{s \geq 0})$ and $\sigma(n_\infty) \vee \mathrm{Inv}((a_s, \theta_s)_{s \geq 0})$ coincide up to negligeable sets.
\par \bigskip
It remains to check that the Poisson boundary of $(a_s,\theta_s)_{s\geq0}$ is trivial. We will actually prove that the Poisson boundary of the process $(a_s,k_s)_{s\geq0}$ is trivial. Recall that $\tilde{\Op}$ is the generator of the lifted diffusion $(g_s=n_s a_s k_s)_{s \geq 0}$ introduced in Section \ref{sec.lift} so that every harmonic function for the diffusion $(a_s,k_s)_{s\geq 0}$ is a $\tilde{\Op}$-harmonic function which is constant in the $N$-variable. So, in order to verify that the Poisson boundary of $(a_s,k_s)_{s\geq0}$ is trivial it remains to show that every bounded $\tilde{\Op}$-harmonic function which is constant in the $N$-coordinates (or equivalently is $N$-left invariant) is constant. For that we proceed as in \cite{harry1}, \cite{raugi77}, \cite{bailleul_raugi} or \cite{camille} and show that a $N$-left invariant, $\tilde{\Op}$-harmonic function is necessarily $A$-left invariant. So it is harmonic for the generator of the ergodic subdiffusion $(k_s)_{s\geq0}$ and thus is constant. 

\begin{prop}\label{trivial}
The invariant $\sigma$-field of the subdiffusion $(a_s,k_s)_{s\geq0}$ is trivial a.s. 
\end{prop}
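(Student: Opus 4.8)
The plan is to follow exactly the reduction announced just above: a nonconstant bounded $\mathrm{Inv}((a_s,k_s))$-measurable functional would produce a nonconstant bounded harmonic function $h$ for the generator of the subdiffusion $(a_s,k_s)$, and such an $h$ lifts to a bounded $\tilde{\Op}$-harmonic function $H$ on $G$ that is left $N$-invariant, namely $H(nak):=h(a,k)$; by hypoellipticity of $\tilde{\Op}$ this $H$ is smooth. It thus suffices to prove that every bounded, left $N$-invariant, $\tilde{\Op}$-harmonic function is constant. Before doing so I would record two structural facts visible on the Iwasawa dynamics \eqref{eq.iwa.k}--\eqref{eq.iwa.a}: first, the equation for $(k_s)$ involves neither $a_s$ nor $n_s$, so $(k_s)$ is an \emph{autonomous} diffusion on the compact group $K$; second, since $A$ is abelian, \eqref{eq.iwa.a} integrates additively, so that $\log a_s=\log a_0+L_s$ where $L_s$ is a functional of the driving Brownian motion and of $(k_u)_{u\le s}$ alone, independent of the starting point $a_0$.

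The core step, and the main obstacle, is to upgrade left $N$-invariance of $H$ to left $NA$-invariance, i.e. to show that $h(a,k)$ does not depend on $a$. The mechanism is the exponential contraction recorded before Proposition \ref{prop.sumup}: for $a\in A^+$ one has $\mathrm{Ad}(a)(Y)=\sum_{\phi\in\Sigma_+}e^{-\phi(\log a)}Y_\phi$ on $\mathcal N$, and by Proposition \ref{prop.sumup} the radial limit $\eta_\infty$ lies in the \emph{open} Weyl chamber, so $\phi(\eta_\infty)>0$ for every positive root. Concretely, fix $c\in\mathcal A$ and compare two copies of the subdiffusion driven by the same Brownian motion and sharing the same (autonomous) path $(k_s)$, whose $A$-coordinates are $a_s$ and $a_se^{c}$; the difference $\Delta_s:=h(a_se^{c},k_s)-h(a_s,k_s)$ is then a bounded martingale and converges almost surely. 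Since $\log a_s\sim s\,\eta_\infty$ escapes to infinity inside $A^+$, the insertion of the bounded factor $e^{c}$ is, through the above contraction, asymptotically invisible to $H$; this is exactly the estimate carried out in \cite{harry1,raugi77,bailleul_raugi,camille}, and it forces $\Delta_\infty=0$ almost surely. Taking expectations and using that $\Delta_s$ is a martingale gives $\Delta_0=h(a_0e^{c},k_0)-h(a_0,k_0)=0$ for all $a_0,k_0$ and all $c$, i.e. the desired $A$-invariance. I expect this vanishing of $\Delta_\infty$ to be the genuinely delicate point, as it cannot be obtained by soft arguments and really uses that $\eta_\infty$ sits in the interior of the Weyl chamber.

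Once $H$ is left $NA$-invariant it is constant along the fibres of the Iwasawa map $g=nak\mapsto k$, hence descends to a function $\tilde h(k)$ on $K$ which, being $\tilde{\Op}$-harmonic, is harmonic for the autonomous diffusion $(k_s)$. As $(k_s)$ is a hypoelliptic diffusion on the compact group $K$, it admits a unique invariant probability measure and is therefore ergodic (consistently with the ergodicity of its projection $(\theta_s)$ on $K/\mathrm{SO}(d)$ established in Section \ref{sec.iwa}); thus $\tilde h(k_s)$ is a bounded martingale converging to a constant, so $\tilde h$, and hence $h$, is constant. This shows that every bounded harmonic function of $(a_s,k_s)$ is constant, i.e. that $\mathrm{Inv}((a_s,k_s)_{s\ge0})$ is trivial.
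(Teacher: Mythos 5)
Your skeleton is the same as the paper's: lift a bounded harmonic function of $(a_s,k_s)$ to a bounded, $N$-left-invariant, $\tilde{\Op}$-harmonic function $H$ on $G$, prove that $H$ is in fact $A$-left-invariant, then descend to $K$ and conclude by ergodicity of $(k_s)$. Your coupling framework for the middle step is also legitimate: since $(k_s)$ is autonomous and $A$ is abelian, $(a_se^{c},k_s)$ is indeed a version of the subdiffusion started from $(a_0e^c,k_0)$, so $\Delta_s$ is a bounded martingale and $\Delta_0=\E[\Delta_\infty]$. The genuine gap is the assertion $\Delta_\infty=0$, and the mechanism you invoke for it cannot work. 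The contraction $\mathrm{Ad}(a)(Y)=\sum_{\phi\in\Sigma_+}e^{-\phi(\log a)}Y_\phi$ acts on $\mathcal{N}$ only; the inserted factor $e^{c}$ lies in $A$, which commutes with $a_s$ exactly, so $\mathrm{Ad}(a_s)e^{c}=e^{c}$ and nothing is contracted, no matter how fast $\log a_s$ escapes into the Weyl chamber. Conversely, the directions which \emph{are} contracted, namely those of $\mathcal{N}$, are precisely the directions that $H$ does not see, since $H$ is $N$-left-invariant. So "asymptotic invisibility of $e^c$" is exactly the statement to be proved, not a consequence of Proposition \ref{prop.sumup}; note also that the interior-of-the-Weyl-chamber property is used in the paper only to obtain the convergence $n_s\to n_\infty$ (i.e.\ to set up the d\'evissage), and plays no role at all in the proof of Proposition \ref{trivial}.

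What actually closes this step in the paper -- and in the references \cite{harry1,raugi77,bailleul_raugi,camille} to which you defer -- is an argument of a different nature. Bounded harmonic functions of the hypoelliptic, left-invariant diffusion are right uniformly continuous and satisfy $\lim_n f(g_ns)=\lim_n f(g_n)$ almost surely for every $s$ in the support $S$ of the law of $g_1$ (Lemma \ref{classical}); hypoellipticity forces $S$ to have nonempty interior, so $SS^{-1}$ contains a neighbourhood $\mathcal{U}$ of the identity; writing $f(a\,a_nk_n)=f(a_nk_nu_n)$ with $u_n=k_n^{-1}ak_n$ (abelianness of $A$ again), compactness of $K$ guarantees $u_n\in\mathcal{U}$ for $a$ close to the identity, and ergodicity of $(k_s)$ produces a random subsequence along which $u_{n_m}$ converges to some $u_\infty=k_\infty^{-1}ak_\infty=s's^{-1}$ with $s,s'\in S$; then Lemma \ref{classical}, applied twice and combined with right uniform continuity, identifies $\lim_n f(a_nk_nu_ns)$ with $\lim_n f(a_nk_n)$, whence $f(ak)=f(k)$. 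None of these ingredients -- the support of the law, the recurrence of the compact factor, the uniform-continuity lemma -- appear in your sketch, and the references you cite carry out precisely this ergodicity/support argument rather than any contraction estimate on the $A$-factor. As it stands, the key vanishing $\Delta_\infty=0$ in your proposal is unsupported.
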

\begin{proof}
Let us recall the following classical fact about bounded harmonic functions of left invariant hypoelliptic diffusions on Lie groups (cf Prop 3.4 of \cite{bailleul_raugi}). Denote by $S$ the support of $\mu_1$ the law of $g_1$ under $\Prob_{\mathrm{Id}}$. Note that the process $(g_n)_{n\in \N}$ is a left random walk on $G$ generated by $\mu_1$. 
\begin{lemm} \label{classical}
Every bounded harmonic function $f$ is right uniformly continuous and for every $s \in S$, $f(g_n s)$ converges $\Prob_{g}$-a.s and
\[
\lim_{n \to +\infty} f(g_n s) = \lim_{n \to +\infty} f(g_n).
\]
\end{lemm}
Take now $f$ a bounded $\tilde{\Op}$-harmonic function which is $N$-left invariant (or equivalently a bounded harmonic function for the subdiffusion $(a_s, k_s)_{s \geq 0}$). We want to show that $f$ is constant and for that it is sufficient to  prove that it is $A$-left invariant ; indeed if it is the case, $f$ depends only on the variable in $K$ and so is harmonic for the ergodic diffusion $(k_s)_{s\geq 0}$ and thus is constant. Since $A$ is abelian (and isomorphic to either $\R$ or $\R^2$ here) it is sufficient to verify that 
$
f(ak)=f(k)
$
holds for every $k\in K$ and every $a$ in some neighborhood $\mathcal{V}_{\mathrm{Id}}$ of $\mathrm{Id}$ in $A$. We denote by $S$ the support of the law of $g_1$ under $\Prob_{\mathrm{Id}}$. By hypoellipticity, the set $S$ contains some open set and thus $SS^{-1}$ contains a neighborhood $\mathcal{U}$ of $\mathrm{Id}$ in $G$. Then we take  $\mathcal{V}_{\mathrm{Id}}$ small enough such that 
\[
\{ k a k^{-1}, \quad k \in K, \ a \in \mathcal{V}_{\mathrm{Id}} \} \subset \mathcal{U}.
\]
Now fix $a\in \mathcal{V}_{\mathrm{Id}}$ and choose $k_{\infty}$ belonging to the support of the invariant law of $(k_{s})_{s\geq0}$ and set $u_{\infty}:= k_{\infty}^{-1} a k_{\infty}$ which belongs to $\mathcal{U}$ and thus to $SS^{-1}$. So there exists $s, s'\in S$ such that $u_{\infty}= s's^{-1}$. Then, denoting by $\E_{(a,k)}$ the expectation over all trajectories $(a_s,k_s)_{s\geq 0}$ starting at $(a,k)\in A\times K$, one gets
\begin{align*}
f(ak)&= \E_{(a,k)} \left[ \lim_{n \to +\infty } f(a_n k_n) \right]= \E_{(a,k)}\left[\lim_{n \to +\infty } f(a_n k_n s)\right] \quad (\mathrm{using} \ \mathrm{Lemma} \ \ref{classical}) \\
&=\E_{(\mathrm{Id},k)} \left[ \lim_{n \to +\infty } f(a a_n k_n s) \right] = \E_{(\mathrm{Id},k)} \left[ \lim_{ n \to +\infty } f( a_n k_n u_n s) \right]
\end{align*}
where we set $u_n:= k_n^{-1} ak_{n}$. By ergodicity of $(k_s)_{s\geq0}$, one deduces that for almost all trajectories it exists a (random) subsequence $(n_m)_{m}$ such that $u_{n_m}$ converges to $u_{\infty}$. Now, since $f$ is right uniformly continuous, and since $u_{\infty}= s's^{-1}$, it comes that for $\Prob_{(\mathrm{Id},k)}$-almost every trajectories
\[
\lim_{ n \to +\infty } f( a_n k_n u_n s) = \lim_{ m \to +\infty } f( a_{n_m} k_{n_m} u_{\infty} s) = \lim_{ m \to +\infty } f( a_{n_m} k_{n_m} s').
\]
But since $s' \in S$ one finally obtains, by Lemma \ref{classical} again, that 
\[
\Prob_{(\mathrm{Id},k)}-a.s, \quad \lim_{ n \to +\infty } f( a_n k_n u_n s) = \lim_{n \to +\infty} f(a_n k_n).
\]
Thus taking expectation, one gets
\begin{align*}
f(ak) = \E_{(\mathrm{Id}, k)} \left[\lim_{n \to +\infty} f(a_n k_n)\right] = f(k),
\end{align*}
and the proof is completed. 
\end{proof}
\begin{coro}
The invariant $\sigma$-field of the relativistic diffusion $(\dot{\xi}_s, \xi_s)_{s \geq0}$ with values in de Sitter or Anti de Sitter space-times is generated by the asymptotic random variable $n_{\infty} \in N$.
\end{coro}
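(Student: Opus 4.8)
The plan is to read off the statement directly from the d\'evissage machinery of Theorem \ref{theo.trivial}, using Proposition \ref{trivial} to guarantee that the relevant ``base'' invariant $\sigma$-field is trivial; beyond this, essentially only bookkeeping remains, since the substantive work has already been done.

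First I would invoke Proposition \ref{prop.sumup}: in the chosen Iwasawa decomposition $G = NAK$, the Iwasawa diffeomorphism $(n,a,\theta) \mapsto (\xi,\dot\xi)$ realizes a bijective correspondence between the relativistic diffusion $(\xi_s,\dot\xi_s)_{s\geq 0}$ on $T^1_+\mathcal M$ and the process $(n_s,a_s,\theta_s)_{s\geq 0}$ on $N \times A \times K/\mathrm{SO}(d)$. As this correspondence is a fixed diffeomorphism applied path-wise, it induces a bi-measurable isomorphism of the canonical path spaces commuting with the time shifts, and therefore identifies $\mathrm{Inv}((\xi_s,\dot\xi_s)_{s\geq 0})$ with $\mathrm{Inv}((n_s,a_s,\theta_s)_{s\geq 0})$. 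It thus suffices to compute the latter.

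Next I would set up the framework of Theorem \ref{theo.trivial} with base process $X_s := (a_s,\theta_s)$ on $X = A \times K/\mathrm{SO}(d)$, and $Y_s := n_s$ on the connected, simply connected nilpotent Iwasawa factor $G := N$ acting on itself by left translations. The four d\'evissage conditions are all available from the discussion preceding Proposition \ref{trivial}: the pair $(a_s,\theta_s)$ is a genuine subdiffusion by the Iwasawa equations \eqref{eq.iwa.k} and \eqref{eq.iwa.a}; by Proposition \ref{prop.sumup} the process $n_s$ converges almost surely to $n_\infty \in N$ (this uses that the limit $\eta_\infty$ lies in the \emph{interior} of the Weyl chamber, which forces the adjoint contraction governing \eqref{eq.iwa.n}); the generator $\tilde{\Op}$ is $N$-equivariant because in \eqref{eq.iwa.n} the variable $n_s$ enters only through left translation while the coefficients depend solely on $(a_s,\theta_s)$; and all bounded $\tilde{\Op}$-harmonic functions are smooth by hypoellipticity. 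The one point needing a short extra remark is the hypothesis on $\mathrm{Inv}((a_s,\theta_s))$: since $\theta_s$ is the image of $k_s$ under the projection $K \to K/\mathrm{SO}(d)$, the process $(a_s,\theta_s)$ is a measurable factor of $(a_s,k_s)$, so $\mathrm{Inv}((a_s,\theta_s))$ is contained in $\mathrm{Inv}((a_s,k_s))$ up to negligible sets; by Proposition \ref{trivial} the latter is trivial, hence so is the former, and we are in the first case of Theorem \ref{theo.trivial}.

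Applying that first case then yields $\mathrm{Inv}((n_s,a_s,\theta_s)_{s\geq 0}) = \sigma(n_\infty)$ up to $\Prob$-negligible sets, and combining with the path-space identification of the first step gives $\mathrm{Inv}((\xi_s,\dot\xi_s)_{s\geq 0}) = \sigma(n_\infty)$, as claimed. I do not expect a genuine obstacle at this stage: the heart of the matter is Proposition \ref{trivial}, and what is left is the careful matching of the abstract d\'evissage data $(X,G,X_s,Y_s)$ with the concrete Iwasawa coordinates, together with the factorisation passage from $(a_s,k_s)$ to $(a_s,\theta_s)$. The one place to stay vigilant is that one must work with the fibre coordinate $\theta_s \in K/\mathrm{SO}(d)$ rather than the full $k_s \in K$, since the relativistic diffusion lives on the homogeneous space $G/\mathrm{SO}(d)$; this is exactly why Proposition \ref{trivial} is established at the level of $(a_s,k_s)$ and only then pushed down to the quotient.
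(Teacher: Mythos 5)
Your proposal is correct and follows essentially the same route as the paper: the pathwise bijection from Proposition \ref{prop.sumup} identifying $\mathrm{Inv}((\xi_s,\dot{\xi}_s)_{s\geq 0})$ with $\mathrm{Inv}((n_s,a_s,\theta_s)_{s\geq 0})$, the d\'evissage Theorem \ref{theo.trivial} applied with $X_s=(a_s,\theta_s)$ and $Y_s=n_s$, and the triviality of $\mathrm{Inv}((a_s,\theta_s)_{s\geq 0})$ deduced from Proposition \ref{trivial} via the factorisation through $(a_s,k_s)_{s\geq 0}$. The only cosmetic difference is one of ordering: the paper first writes $\mathrm{Inv}((n_s,a_s,\theta_s)_{s\geq 0})=\sigma(n_\infty)\vee\mathrm{Inv}((a_s,\theta_s)_{s\geq 0})$ and then kills the second term, whereas you verify the triviality first and invoke case (1) of the theorem directly.
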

\begin{proof}
Recall that $(\dot{\xi}_{s}, \xi_s)_{s\geq 0}$ is in bijection with $(n_s, a_s, \theta_s)_{s\geq0}$ so that 
\[
\mathrm{Inv}((\dot{\xi}_s, \xi_s))_{s\geq0})= \mathrm{Inv}((n_s, a_s,\theta_s)_{s\geq0}).
\]
By the d\'evissage method and Proposition \ref{trivial}, since $ (a_s,\theta_s)_{s\geq0}$ is a subdiffusion of $(a_s,k_s)_{s\geq0}$, we deduce that up to negligeable sets
\begin{align*}
\mathrm{Inv}((\dot{\xi}_s, \xi_s))_{s\geq0}) & = \sigma(n_\infty) \vee \mathrm{Inv}((a_s,\theta_s)_{s\geq0})= \sigma(n_\infty) .
\end{align*}
\end{proof}

\begin{rema}
Let us summarize here the results of Section \ref{sec.model}. In model space-times, the set of exit points of the relativistic diffusion is thus in bijection with the group $N$ appearing in the Iwasawa decomposition of the orthonormal frame bundle of the base manifold. In the case of de Sitter space-time, by Remark \ref{rem.dS}, the group $N$ identifies with the causal / conformal boundary $\partial \dS \approx \mathbb S^d$. In the case of Anti de Sitter space-time, as detailed at the end of Section \ref{sec.geomeaning} above, the group $N$ this time identifies with the set of pointed light circles, carrying more information than the single conformal boundary.
\end{rema}
\subsection{Poisson boundary in Robertson--Walker space-times} \label{sec.RWglobal}

Having given a full description of the long time asymptotics of the relativistic diffusion in the three Lorentz model manifolds, we now turn to the case of curved manifolds and more precisely the case of Robertson--Walker space-times. In this geomeric context, the almost sure long time asymptotic behavior of the relativistic diffusion was described in details in \cite{mathese,angst2016} and the main point here is thus to explicit the Poisson boundary.  Let us first recall here some basic facts that are established in the latter references. In a warped product $\mathcal M = (0,+\infty) \times_{\alpha} M$, the relativistic diffusion  $(\xi_s, \dot{\xi}_s)$ with values in $T^1 \mathcal M$ can naturally be written $(\xi_s, \dot{\xi}_s)=(t_s,\dot{t}_s, x_s, \dot{x}_s)$ with $(t_s, \dot{t}_s) \in T (0, +\infty)$ and $(x_s, \dot{x}_s) \in TM$, and the fact that the process $\dot{\xi}_s$ has unit pseudo-norm translates into the relation 
\begin{equation}\label{pseudonorm}
\dot{t}_s^2 - 1 = \alpha^2(t_s)|\dot{x}_s|^2.
\end{equation}
The ``temporal'' process $(t_s, \dot{t}_s)$ then appears as a two-dimensional subdiffusion, solution of the following system of Equations 
\[
d t_s = \dot{t}_s ds, \qquad d\dot{t}_s =-H(t_s) (\dot{t}_s^2-1)ds + \frac{d \sigma^2}{2} \dot{t}_s ds  + \sigma d M^{\dot{t}}_s,
\]
where $M^{\dot{t}}_s$ is a local martingale with bracket $d \langle M^{\dot{t}},M^{\dot{t}}\rangle_s= (\dot{t}_s^2-1)ds$.
In other words, it admits the following hypoelliptic infinitesimal generator 
\[
L_{(t, \dot{t})} := \dot{t} \partial_t + \left(-H(t) (\dot{t}^2-1)ds + \frac{d \sigma^2}{2} \dot{t} \right) \partial_{\dot{t}} + \frac{ \sigma^2}{2}(\dot{t}^2-1) \partial_{\dot{t}}^2.
\]
Lemma 4.1 of \cite{angst2016} then ensures that almost surely $\dot{t}_s^2 - 1>0$ for all $s>0$, i.e. $|\dot{x}_s|>0$, allowing to consider the process $(x_s, \dot{x}_s/|\dot{x}_s|) \in T^1M$. The trajectories of the process being future oriented, we have in fact $\dot{t}_s>1$ for all $s>0$ which ensures that the first component $t_s$ is transient. The recurrence/transience of the derivative $\dot{t}_s$ is well understood and is related to the rate of decrease of the Hubble function $H$, namely $\dot{t}_s$ is almost surely transient if and only if $H^d$ is integrable at infinity, see Proposition 3.3 of \cite{angst2016}. 
No matter the recurrence or transience of the component $(\dot{t}_s)_{s \geq 0}$, the following Liouville Theorem for the temporal subdiffusion was established in \cite{angst1}.

\begin{prop}[Proposition 4.8 of \cite{angst1}]\label{pro.temptrivial}
The invariant sigma field associated to the diffusion $(t_s, \dot{t}_s)$ is trivial, equivalently all bounded $L_{(t, \dot{t})}-$harmonic functions are constant. 
\end{prop}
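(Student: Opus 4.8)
The plan is to establish the Liouville property directly by a coupling argument. If $h$ is a bounded $L_{(t,\dot t)}$-harmonic function, then $(h(t_s,\dot t_s))_{s\ge0}$ is a bounded martingale, so it converges almost surely and in $L^1$ to a limit $h_\infty$ with $h(t,\dot t)=\E_{(t,\dot t)}[h_\infty]$; triviality of $\mathrm{Inv}((t_s,\dot t_s))$ is then equivalent to $h$ being constant, and for this it suffices to show $h(z)=h(z')$ for any two starting points. To run the coupling I would first put the diffusion in a more convenient form via the substitution $\dot t_s=\cosh(\rho_s)$, which is licit since $\dot t_s>1$ for all $s>0$. A direct computation turns $L_{(t,\dot t)}$ into $\cosh(\rho)\,\partial_t+\big[-H(t)\sinh(\rho)+\tfrac{(d-1)\sigma^2}{2}\coth(\rho)\big]\partial_\rho+\tfrac{\sigma^2}{2}\partial_\rho^2$, so that $\rho_s$ now solves a scalar SDE with \emph{constant} (hence non-degenerate) diffusion coefficient $\sigma$, while $t_s$ is the degenerate coordinate slaved to it through $dt_s=\cosh(\rho_s)\,ds$. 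This exhibits a kinetic/Langevin structure, with $t$ the position and $\cosh(\rho)$ the velocity; moreover, at equal values of $t$ the $\rho$-drift is a decreasing function of $\rho$ (since $\sinh$ increases, $\coth$ decreases and $H\ge0$), so the difference of two $\rho$-coordinates is mean-reverting.

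The core of the proof is to build a successful coupling of two copies $(t_s,\rho_s)$ and $(t'_s,\rho'_s)$ issued from $z$ and $z'$. I would correlate the two driving Brownian motions according to the current pair of gaps $(\delta_s,\eta_s):=(t_s-t'_s,\rho_s-\rho'_s)$: a reflection-type coupling while the copies are far apart, switching to the synchronous coupling $dW'=dW$ once they are close. In the synchronous regime the martingale parts cancel, so $\eta_s$ has finite variation and contracts by the mean reversion just noted, while $\delta_s$ obeys $\dot\delta_s=\cosh(\rho_s)-\cosh(\rho'_s)$ and is therefore frozen exactly when $\eta_s=0$. The essential difficulty---and the reason a pure synchronous coupling cannot work---is that the two gaps must be sent to zero \emph{simultaneously}: the velocity gap $\eta$ is easy to close, but this leaves a residual position gap $\delta$ that can be moved only through $\eta$ itself. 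I would control this degenerate step with a twisted Lyapunov functional of the form $\mathcal E_s=|\eta_s|+\kappa\,\phi(\delta_s)$, as in reflection couplings for hypoelliptic Langevin diffusions, choosing $\kappa$ and $\phi$ so that the reflection phase feeds enough fluctuation into $\delta$ to let it reach $0$ while $\eta$ stays controlled; from the meeting time $\tau$ on, one drives both copies with the same noise so that $(t_s,\rho_s)=(t'_s,\rho'_s)$ for all $s\ge\tau$.

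The construction has to be adjusted to the asymptotics of the Hubble function, and this is where the standing hypotheses enter. Since $H$ is non-increasing it converges along $t_s\to+\infty$, and one separates the recurrent regime of $\dot t_s$, occurring when $H^d\notin\mathbb L^1$ (e.g.\ exponential growth, $H$ bounded away from $0$), from the transient regime $H^d\in\mathbb L^1$ (e.g.\ polynomial growth). In the recurrent case $\rho_s$ keeps returning to compact sets, $\coth(\rho_s)-\coth(\rho'_s)$ supplies genuine mean reversion and $\delta_s$ fluctuates enough to hit $0$, feeding the coupling. In the transient case $\rho_s\to+\infty$, the term $\tfrac{(d-1)\sigma^2}{2}\coth(\rho)\to\tfrac{(d-1)\sigma^2}{2}$ acts as the constant positive drift responsible for transience, and the mean reversion of $\eta$ now comes from $-H(t_s)(\sinh\rho_s-\sinh\rho'_s)$; one must check, using the precise decay rate of $H$ against the growth of $\rho_s$, that this still contracts $\eta$ and that the $t$-gap can be closed before the speeds diverge. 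In either regime the discrepancy $-[H(t_s)-H(t'_s)]\sinh(\rho_s)$ produced by $t_s\ne t'_s$ must be absorbed, which is exactly where the log-concavity and the prescribed growth of $\alpha$ are used.

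Once the successful coupling is in hand the conclusion is immediate: $h(t_s,\rho_s)=h(t'_s,\rho'_s)$ for all $s\ge\tau$, so the two bounded martingales share the same limit, $h_\infty=h'_\infty$ almost surely on the coupling space, and taking expectations of the respective marginals yields $h(z)=\E[h_\infty]=\E[h'_\infty]=h(z')$. Hence $h$ is constant and $\mathrm{Inv}((t_s,\dot t_s))$ is trivial. I expect the genuine obstacle to be precisely the simultaneous closure of the two gaps through the single non-degenerate coordinate---the hypoelliptic/kinetic coupling step---and, within it, making the Lyapunov contraction uniform across the polynomial, sub-exponential and exponential growth regimes; by comparison the reduction $\dot t=\cosh\rho$ and the martingale/meeting-time argument are routine. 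A reduction to a one-dimensional time-inhomogeneous diffusion by reparametrising $s$ by $t$ is tempting, but the $t$-dependence of the drift makes the relevant Liouville property no easier to read off than the coupling above.
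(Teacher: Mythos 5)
Your overall strategy---prove the Liouville property by coupling---is the same family of argument as the one the paper relies on (the proof in \cite{angst1}, which Remark \ref{rem.indepcoupling} describes as producing an ``automatic coupling'' between two \emph{independent} copies of the temporal diffusion), but the coupling you aim for is strictly stronger than what is needed, and the step you yourself flag as the ``genuine obstacle'' is exactly the step that is missing and cannot be carried out by the tools you invoke. You insist on an \emph{exact} (same-time) coalescence of the two copies, i.e.\ a successful coupling; that would prove triviality of the tail sigma-field, whereas the statement only requires triviality of the invariant sigma-field, which by \cite{cranston} is equivalent to the existence of a \emph{shift} coupling: the two paths need only occupy the same state at possibly \emph{different} random times. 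This weakening is what the cited proof exploits: two independent copies almost surely satisfy $(t^1_{T_1},\dot t^1_{T_1})=(t^2_{T_2},\dot t^2_{T_2})$ for finite random times $T_1,T_2$ (this is precisely how the result is reused in Propositions \ref{pro.etrivial} and \ref{prop.predeviss}), and such a crossing of the two trajectories in the state space is far more tractable here because $t_s$ is strictly increasing, so each path can be read as a one-dimensional graph $t\mapsto\dot t$ and two independent graphs cross by continuity and recurrence-type arguments. Ironically, this reparametrization by $t$ is the device you consider and dismiss in your last sentence.

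The concrete gap in your route: once you switch to the synchronous regime, the velocity gap $\eta_s$ has finite variation and the position gap $\delta_s$ can move only through $\eta_s$, so coalescence means hitting the single point $(\delta,\eta)=(0,0)$ with a two-dimensional degenerate diffusion driven by one Brownian motion. This is the hypoelliptic (Kolmogorov-diffusion-type) coupling problem, which is a substantial result in its own right; in particular, the Eberle-style twisted functional $|\eta_s|+\kappa\,\phi(\delta_s)$ with reflection/synchronous switching yields contraction of a Wasserstein-type distance, \emph{not} coalescence in finite time, so it does not produce the meeting time $\tau$ on which your final paragraph rests. Moreover you would need the contraction estimates to hold uniformly across the regimes of $H$: in the transient regime $\rho_s\to+\infty$ and $\coth\rho_s\to 1$, so that source of mean reversion degenerates exponentially fast, and the cross term $-[H(t_s)-H(t'_s)]\sinh\rho_s$ is uncontrolled precisely while $\delta_s\neq 0$; no such estimates are supplied. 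As written, the proposal is therefore a plan whose central step is absent---and the missing step is avoidable altogether by aiming at a shift coupling of independent copies rather than an exact coupling.
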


\begin{rema}\label{rem.indepcoupling}
The proof of the above Proposition \ref{pro.temptrivial} given in the reference \cite{angst1} actually shows that there exists an automatic coupling between two independant copies of the temporal subdiffusions. This fact will be used in the proofs in the next Propositions \ref{pro.etrivial} and \ref{prop.predeviss}.
\end{rema}

As in the case of geodesics, the long time asymptotic behavior of the ``spatial'' components $(x_s, \dot{x}_s/|\dot{x}_s|) $ of the relativistic diffusion is then intimately related to the finiteness of the two integrals
\begin{equation}\label{eq.defCD}
C_s:=\int_0^{s} \dot{C}_u du, \quad\hbox{and} \quad  D_s:=\int_0^{s} \dot{D}_u du,
\end{equation}
where 
\begin{equation}\label{eq.defCD2}
\dot{C}_s := \frac{1}{\dot{t}_s^2-1} \qquad \dot{D}_s :=\frac{\sqrt{\dot{t}_s^2-1}}{\alpha(t_s)}.
\end{equation}
The finiteness of these two random additive functionals is related to the geometry of the manifold in the following way. 

\begin{prop}[Corollary 4.2 of \cite{angst2016}]\label{myprop}
If the torsion function $\alpha$ satisfies the hypotheses of Section \ref{sec.geoRW}, then we have the equivalences
\[
C_{\infty}<+\infty \;\; \text{a.s.} \Longleftrightarrow \begin{cases} H^3 \in \mathbb{L}^1, &\mathrm{if} \ d \geq 4 \\ H^3 \in \mathbb{L}^{1-} \ &\mathrm{if} \ d=3\end{cases} , \quad D_{\infty}<+\infty \;\; \text{a.s.}  \Longleftrightarrow I(\alpha)<+\infty.
\]
\end{prop}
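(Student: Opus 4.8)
The plan is to prove both equivalences through a single reduction to the sharp pathwise behaviour of the temporal subdiffusion $(t_s,\dot t_s)$. First, observe that $\{C_\infty<+\infty\}$ and $\{D_\infty<+\infty\}$ are tail events of $(t_s,\dot t_s)_{s\ge0}$, hence lie in its invariant $\sigma$-field; by the Liouville property of Proposition \ref{pro.temptrivial} they have probability $0$ or $1$, so it suffices to certify the quantitative criterion on a set of positive probability. Next, since $\dot t_s>1$ the proper time $s\mapsto t_s$ is a strictly increasing $C^1$-bijection onto $[t_0,+\infty)$ with $dt=\dot t_s\,ds$, which rewrites the functionals as
\[
C_\infty=\int^{+\infty}\frac{dt}{\dot t_s(\dot t_s^2-1)},\qquad D_\infty=\int^{+\infty}\frac{\sqrt{\dot t_s^2-1}}{\dot t_s\,\alpha(t)}\,dt,
\]
$\dot t_s$ being read as a function of $t=t_s$. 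The $D$-criterion is then already half elementary: as $\sqrt{\dot t_s^2-1}/\dot t_s\le1$ one gets $D_\infty\le\int^{+\infty}dt/\alpha(t)$, hence the implication $I(\alpha)<+\infty\Rightarrow D_\infty<+\infty$.

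To obtain the remaining lower bounds I would put the diffusion in tractable form via $\dot t_s=\cosh r_s$, $r_s>0$. A direct It\^o computation yields
\[
dr_s=\sigma\,dB_s+\Big(-H(t_s)\sinh r_s+\tfrac{(d-1)\sigma^2}{2}\coth r_s\Big)ds
\]
for a standard Brownian motion $B$, an equation in which $r=0$ is strongly repelling. The structural point is that $H$ is non-increasing and varies slowly on the mixing scale of $r$, so one may freeze $H(t_s)=h$ and compare $r_s$ with the one-dimensional ergodic diffusion on $(0,+\infty)$ of the same generator, whose invariant density is
\[
m_h(r)\,\propto\,(\sinh r)^{d-1}\exp\!\Big(-\tfrac{2h}{\sigma^2}\cosh r\Big).
\]
By Laplace's method this measure concentrates around the value $r^\ast(h)$ with $\sinh r^\ast\asymp\cosh r^\ast\asymp 1/h$; in other words $\dot t_s$ hovers at the deterministic scale $\dot t_s\asymp 1/H(t_s)$, which tends to infinity in the transient regime $H^d\in\mathbb{L}^1$ (where $H\to0$), whereas in the recurrent regime $H^d\notin\mathbb{L}^1$ the function $H$ stays bounded away from $0$ and $\dot t_s$ remains in a compact subset of $(1,+\infty)$.

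Granting that $C_s$ and $D_s$ may, along almost every path, be compared with their averages against $m_{H(t_s)}$, both criteria drop out of Laplace estimates and the substitution $ds\asymp H\,dt$. For $D_\infty$ one finds $\langle\sqrt{\dot t^2-1}\rangle_{m_h}\asymp 1/h$, whence $D_\infty\asymp\int^{+\infty}dt/\alpha(t)=I(\alpha)$ and the converse $D_\infty<+\infty\Rightarrow I(\alpha)<+\infty$. For $C_\infty$ one computes
\[
\Big\langle\tfrac{1}{\dot t^2-1}\Big\rangle_{m_h}=\frac{\int_0^{+\infty}(\sinh r)^{d-3}e^{-\frac{2h}{\sigma^2}\cosh r}\,dr}{\int_0^{+\infty}(\sinh r)^{d-1}e^{-\frac{2h}{\sigma^2}\cosh r}\,dr},
\]
whose numerator is peaked at $r^\ast$ of size $h^{-(d-3)}$ when $d\ge4$, giving $\langle(\dot t^2-1)^{-1}\rangle\asymp h^2$ and hence $C_\infty\asymp\int^{+\infty}H^3\,dt$, i.e. $C_\infty<+\infty\Leftrightarrow H^3\in\mathbb{L}^1$. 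At $d=3$ the exponent $d-3$ vanishes, the numerator integrand is flat across the whole bulk $[0,r^\ast]$ and integrates to a length $\asymp\log(1/h)$ rather than a peak, so that $\langle(\dot t^2-1)^{-1}\rangle\asymp h^2\log(1/h)$ and $C_\infty\asymp\int^{+\infty}H^3\log(1/H)\,dt$; this logarithmically sharpened threshold is exactly what the notation $H^3\in\mathbb{L}^{1-}$ records.

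The hard part is twofold. First, the averaging cannot be invoked as a black-box ergodic theorem, since in the transient regime the frozen centre $r^\ast(H(t_s))\to+\infty$ makes the relevant diffusion genuinely non-stationary; I would instead sandwich $C_s$ and $D_s$ between explicit super- and sub-martingales built from the scale function and speed measure of the frozen generators, using the slow variation of $H$ to control the error, and fall back on the transience/recurrence dichotomy of Proposition 3.3 of \cite{angst2016} together with the a priori almost sure bounds on $\dot t_s$ available there. Second, and most delicately, the $d=3$ borderline demands genuinely quantitative (not merely logarithmic-order) control of the excursions of $r_s$, both near the repelling boundary and around the slowly drifting centre $r^\ast$, so as to certify that the logarithmic correction is neither over- nor under-counted; this is where the argument is most likely to require the full strength of the estimates of \cite{angst2016}, rather than the soft comparison sufficient for $d\ge4$ and for $D_\infty$.
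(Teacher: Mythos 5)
A preliminary remark: the paper does not prove this proposition at all — it is imported verbatim as Corollary 4.2 of \cite{angst2016} — so your attempt can only be judged on its own terms. On those terms, your preparatory steps are correct and well chosen: the events $\{C_\infty<+\infty\}$ and $\{D_\infty<+\infty\}$ are indeed shift-invariant, so Proposition \ref{pro.temptrivial} yields the zero-one law; the change of variables $dt=\dot t_s\,ds$ is legitimate since $\dot t_s>1$; the bound $\sqrt{\dot t^2-1}/\dot t\le 1$ does give the implication $I(\alpha)<+\infty\Rightarrow D_\infty<+\infty$; the It\^o computation for $r_s=\mathrm{arccosh}(\dot t_s)$ is exact (the drift is indeed $-H(t_s)\sinh r_s+\tfrac{(d-1)\sigma^2}{2}\coth r_s$); and the frozen invariant density $m_h$ together with the Laplace asymptotics, including the logarithmic enhancement at $d=3$, reproduces the correct thresholds.

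The genuine gap is that the entire content of the equivalences sits in the single sentence beginning ``Granting that $C_s$ and $D_s$ may, along almost every path, be compared with their averages against $m_{H(t_s)}$'', and this is never proved — it is the hard adiabatic-averaging problem, not a technicality. In the transient regime the frozen center $r^\ast(H(t_s))\to+\infty$, so there is no stationary measure to average against; the assertion that $H$ ``varies slowly on the mixing scale of $r$'' is itself quantitative and nontrivial (the distance to equilibrium $r^\ast(h)\asymp\log(1/h)$ diverges as $h\to0$, so relaxation from a bad position is not $O(1)$); and the super-/sub-martingale sandwich you announce is never constructed, only named. Moreover, to fill these holes you explicitly fall back on ``Proposition 3.3 of \cite{angst2016}'' and, for the delicate $d=3$ borderline, on ``the full strength of the estimates of \cite{angst2016}'' — that is, the missing steps are delegated to the very paper whose Corollary 4.2 is the statement to be proved, which makes the argument a heuristic reconstruction rather than an independent proof. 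A secondary weakness: your identification of the log-corrected threshold $\int^{+\infty} H^3\log(1/H)\,dt<+\infty$ with the notation $H^3\in\mathbb{L}^{1-}$ (which this paper never defines) is asserted, not established, and for the converse direction $D_\infty<+\infty\Rightarrow I(\alpha)<+\infty$ you still need a lower bound on the time-average of $\tanh r_s$, which again requires the unproven ergodic control. Net result: one implication out of four is rigorously established, the other three are correctly guessed but not proved.
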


Before getting into more details for each cases let us before recall that the pseudo-norm relation \eqref{pseudonorm} insures that the relativistic diffusion $(\xi_s, \dot{\xi}_s)_{s\geq 0}$ is in bijection with the diffusion $(t_s, \dot{t}_s, x_s, \dot{x}_s/\vert \dot{x}_s\vert)_{s\geq0}$ taking values in $T(0,+\infty) \times T^{1}M$. When $M= \mathbb R^d$ we use the canonical coordinates $(x^1, \dots , x^d)$ and when $M=\mathbb{S}^d$ or $M=\mathbb{H}^d$ we see it as Riemannian submanifolds of $\mathbb R^d$ and $\mathbb R^{1,d}$ respectively and we use extrinsic coordinates $(x^0, x^1, \dots, x^d)$. The dynamics of $(x_s, \dot{x}_s/{\vert \dot{x}_s\vert}) \in T^1 M $ can be obtained from \eqref{eqn.flj} and is given by 
\begin{equation}\label{eqn.xdot}
\left \{
\begin{array}{rl}
\displaystyle{ d x_s} = &\displaystyle{\dot{D}_s \frac{\dot{x}_s}{\vert \dot{x}_s \vert} ds,} \\
\\
  \displaystyle{d \frac{\dot{x}^{\mu}_s}{\vert \dot{x}_s \vert } }= & \displaystyle{- \kappa \dot{D}_s x^{\mu}_s ds - \sigma^2 \frac{d-1}{2} \dot{C}_s \times \frac{\dot{x}^{\mu}_s}{\vert \dot{x}_s \vert } ds + \sigma \sqrt{\dot{C}_s}dM_{s}^{\dot{x}^{\mu}/\vert \dot{x} \vert} \quad \mu=0,\dots, d,}
 \\
\end{array}
\right.
\end{equation}
where we exclude the coordinate $\mu =0$ when $M=\mathbb R^d$. The parameter $\kappa$ denotes the constant curvature of $M$ and thus takes values $0, 1$ and $-1$ if $M=\mathbb R^d$, $\mathbb S^d$ or $\mathbb H^d$ respectively. The martingale terms $M_{s}^{\dot{x}^{\mu} /\vert x \vert}$ appearing in Equation \eqref{eqn.xdot} are such that 
\begin{equation*}
\left \{
\begin{array}{rll}
\displaystyle{d \langle M^{\dot{x}^{\mu}/\vert \dot{x} \vert}, M^{\dot{t}}_s \rangle} &=0, & \mu=0,\dots, d, \\
\\
\displaystyle{d \langle M^{\dot{x}^{\mu}/\vert \dot{x} \vert}, M^{\dot{x}^{\nu}/\vert \dot{x} \vert} \rangle_s} &= \displaystyle{\big (\delta_{\mu \nu} + (\kappa -1)\delta_{\mu 0}\delta_{\nu 0} - \kappa x^{\mu}_s x^{\nu}_s -\frac{\dot{x}^{\mu}_s}{\vert \dot{x}_s \vert }\frac{\dot{x}^{\nu}_s}{\vert \dot{x}_s \vert } \big )ds} & \displaystyle{\mu,\nu=0,\dots,d.}
\end{array}
\right.
\end{equation*}
Here again, in the case where $M=\mathbb R^d$ or equivalently $\kappa =0$, the indices $\mu, \nu=0$ are excluded. 
The reader familiar with diffusion processes on manifolds will recognise in the dynamics of $\dot{x}_s/\vert \dot{x}_s \vert$ the one of a spherical Brownian motion on the unit spheres tangent to $x_s$ and changed in time by the clock $C_s$ defined in Equations \eqref{eq.defCD} and \eqref{eq.defCD2}.

\subsubsection{Robertson--Walker space-times with finite horizon} \label{sec.RWfinite}
This section is devoted to the proof of Theorem \ref{theo.finitehori} which corresponds to the case where the space-time $\mathcal M = (0,+\infty) \times_{\alpha} M$ has finite horizon, i.e. the case where $I(\alpha)<+\infty$. Then, the asymptotic behavior of the relativistic diffusion obbeys a dichotomy depending on the integrability of the Hubble function, namely if the decreasing function $H^3$ is integrable at infinity or not.

\paragraph{The case where $H^3 \in \mathbb L^1$} \textcolor{white}{blanc}\par
In the case where the space-time $\mathcal M = (0,+\infty) \times_{\alpha} M$ has finite horizon and the Hubble function satisfies the condition $H^3 \in \mathbb L^1$, then Theorem 3.3 of \cite{angst2016} makes explicit the almost sure asymptotic behavior of the relativistic diffusion $(\xi_s, \dot{\xi}_s)=(t_s,\dot{t}_s, x_s, \dot{x}_s)$, precisely
\begin{itemize}
\item the temporal process $(t_s, \dot{t}_s)$ is transient almost surely,
\item  the spatial process $(x_s, \dot{x}_s/|\dot{x}_s|)$ with values in $T^1 M$  converges almost surely to a random point in $ T^1 M$. 
\end{itemize}
Note that the unit tangent bundles of the Riemannian fibers $M=\mathbb R^d$, $\mathbb H^d$ or $\mathbb S^d$ can all be expressed as homogeneous spaces of the form $G/K$, namely we have
\[
T^1\mathbb R^d = \mathbb R^d \rtimes \mathrm{SO}(d)/\mathrm{SO}(d-1), \quad T^1\mathbb H^d = \mathrm{PSO}(1,d)/\mathrm{SO}(d-1), \quad T^1 \mathbb S^d = \mathrm{SO}(d+1)/\mathrm{SO}(d-1).
\]
Moreover, the dynamics of the relativistic diffusion is easily seen to be equivariant under the action of $K=\mathrm{SO}(d-1)$, so that we are precisely in position to apply the d\'evissage scheme with
\[
X_s:=(t_s, \dot{t}_s) \in X:=(0,+\infty) \times [1, +\infty), \quad Y_s = (x_s, \dot{x}_s/|\dot{x}_s|) \in G/K.
\]
Namely, applying Theorem \ref{theo.homo} recalled in Section \ref{sec.devissage}, we get the almost sure identification
\[
\text{Inv}(t_s, \dot{t}_s,\dot{x}_s/|\dot{x}_s|,x_s) = \text{Inv}(t_s, \dot{t}_s) \vee \sigma(Y_{\infty}).
\]
From the above Proposition \ref{pro.temptrivial}, we know that the invariant sigma field $\text{Inv}(t_s, \dot{t}_s)$ is trivial, so we can conclude that $
\text{Inv}(t_s, \dot{t}_s,\dot{x}_s/|\dot{x}_s|,x_s) = \sigma(Y_{\infty}),$ up to negligeable sets, which is the first claim of Theorem \ref{theo.finitehori}. 

We now give the proof of the second claim of Theorem \ref{theo.finitehori}. We give two separate proofs, the first one   corresponding to the case of a warped product with a Euclidean fiber, the second one allowing to deal with non zero curvature fibers. 

\paragraph{The case where $H^3 \notin \mathbb L^1$ and the fiber is Euclidean.} We consider here the case of a warped product $\mathcal M = (0,+\infty) \times_{\alpha} \mathbb R^d$ under the hypothesis that $H^3 \notin \mathbb L^1$. In virtue of Proposition 4.6 and the proof of Proposition 4.7 in \cite{angst2016} and Lemma 4.1 of \cite{angst1}, the asymptotic behavior of the relativistic diffusion $(\xi_s, \dot{\xi}_s)=(t_s,\dot{t}_s, x_s, \dot{x}_s)$ is then the following: 
\begin{itemize}
\item the temporal subdiffusion $(t_s,\dot{t}_s)$ is transient.
\item the process $(t_s,\dot{t}_s, \dot{x}_s/|\dot{x}_s|)$ is also a subdiffusion of the whole process.
\item there exists a Brownian motion $\widetilde{\Theta}_s$ with value in $\mathbb S^{d-1}$, which is independant of the temporal subdiffusion and such that normalized angular derivative writes $\dot{x}_s/|\dot{x}_s| = \widetilde{\Theta}(C_s)$. 
\item the projection $x_s \in \mathbb R^d$ converges almost surely to a random limit point  $x_{\infty} \in \mathbb R^d$.
\end{itemize}
Let us now show how these results imply that the Poisson boudary of the full relativistic diffusion coincides with $\sigma(x_{\infty})$. Let us first prove the following Liouville theorem 
\begin{prop}\label{pro.etrivial}
The invariant sigma field associated to the diffusion $(t_s, \dot{t}_s,\dot{x}_s/|\dot{x}_s|)$ is trivial. 
\end{prop}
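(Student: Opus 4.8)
The statement is a Liouville-type result, so the plan is to prove that every bounded harmonic function $h$ for the generator of the subdiffusion $(t_s,\dot t_s,\Theta_s)$, where $\Theta_s:=\dot x_s/|\dot x_s|$, is constant; triviality of the invariant $\sigma$-field then follows from the correspondence between bounded harmonic functions and the Poisson boundary recalled in Section \ref{sec.devissage}. For such an $h$, the process $h(t_s,\dot t_s,\Theta_s)$ is a bounded martingale, hence it converges almost surely and in $L^1$, and $h$ evaluated at the starting point equals the expectation of this limit. It therefore suffices to show that $h$ takes the same value at any two starting points, which I would establish by an explicit coupling.

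The first and main step is to remove the angular dependence. Recall from the asymptotics cited just above that $\Theta_s=\widetilde\Theta(C_s)$, where $\widetilde\Theta$ is a spherical Brownian motion on $\mathbb{S}^{d-1}$ independent of the temporal subdiffusion and $C_s=\int_0^s du/(\dot t_u^2-1)$ is the clock of Equations \eqref{eq.defCD}--\eqref{eq.defCD2}. Since $H^3\notin\mathbb{L}^1$, Proposition \ref{myprop} yields $C_\infty=+\infty$ almost surely, so the clock runs to infinity. Running two copies from $(t,\dot t,\theta_1)$ and $(t,\dot t,\theta_2)$ along the \emph{same} temporal trajectory, hence with the same clock $C_s$, I would couple the two driving spherical Brownian motions by a coalescent coupling on $\mathbb{S}^{d-1}$ (two spherical Brownian motions started from distinct points meet in finite time and can be made to stick together thereafter). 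Because $C_\infty=+\infty$, this coalescence occurs at a finite real time, after which the two angular components, and thus the two full processes, coincide. Bounded-martingale and dominated convergence then force $h(t,\dot t,\theta_1)=h(t,\dot t,\theta_2)$, so $h$ does not depend on the angular variable.

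The second step uses the orthogonality $d\langle M^{\dot x^\mu/|\dot x|},M^{\dot t}\rangle=0$ recorded in Section \ref{sec.RWglobal}: applied to a function of $(t,\dot t)$ alone, the full generator reduces exactly to the temporal generator $L_{(t,\dot t)}$. Hence the $\theta$-independent function $h$ is a bounded $L_{(t,\dot t)}$-harmonic function, and Proposition \ref{pro.temptrivial} forces it to be constant, which completes the argument. Alternatively, and more directly in the spirit of Remark \ref{rem.indepcoupling}, one can run a single combined coupling: couple the two temporal subdiffusions via the automatic coupling so that they coincide from a finite random time $T$ on, observe that past $T$ the two clocks differ only by the fixed shift $\Delta:=C^1_T-C^2_T$, and then coalesce the two shifted spherical Brownian motions $\widetilde\Theta^1(\,\cdot\,+\Delta)$ and $\widetilde\Theta^2(\,\cdot\,)$ exactly as above.

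The delicate point, in the combined version, is the angular coalescence in the presence of the random clock shift $\Delta$: one must build, conditionally on the temporal paths, a coupling of the two \emph{independent} spherical Brownian motions driving the two copies so that $\widetilde\Theta^1(\,\cdot\,+\Delta)$ and $\widetilde\Theta^2(\,\cdot\,)$ coalesce, while preserving the independence of each angular motion from its temporal subdiffusion and the adaptedness of the whole construction. The two-step reduction sidesteps this difficulty by fixing a common temporal trajectory, and I expect this to be the cleanest route; the only substantial ingredient then is the coalescence estimate for spherical Brownian motion together with the finiteness of the meeting clock-time, which is guaranteed precisely by $C_\infty=+\infty$.
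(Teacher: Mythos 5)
Your proof is correct, but it is organized differently from the paper's. The paper establishes the proposition by exhibiting a single \emph{shift} coupling of the full three-component process, in the spirit of Cranston--Wang \cite{cranston}: it takes two \emph{independent} copies $X^i_s=(t^i_s,\dot t^i_s,\Theta^i_s)$, matches their temporal parts at two random times $T_1,T_2$ via the automatic coupling of Remark \ref{rem.indepcoupling}, then \emph{defines} the second copy's angular future after $T_2$ as the mirror image of the first copy's angular future after $T_1$ with respect to the median hyperplane, and uses the divergence of the clock $C_s$ (ergodicity of the spherical part) to make the two angular paths meet at a common further time $T^*$, yielding ${X'}^2_{T_2+T^*}={X}^1_{T_1+T^*}$ and hence triviality. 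You instead perform a two-step reduction: first, running both copies along a \emph{common} temporal trajectory (legitimate thanks to the skew-product structure $\Theta_s=\widetilde\Theta(C_s)$ with $\widetilde\Theta$ independent of the temporal subdiffusion) and mirror-coupling only the two driving spherical Brownian motions, you get a strict, same-time coupling showing that bounded harmonic functions do not depend on the angular variable; second, you invoke Proposition \ref{pro.temptrivial} to remove the remaining $(t,\dot t)$-dependence. Your route is more modular: it uses Proposition \ref{pro.temptrivial} as a black box instead of Remark \ref{rem.indepcoupling}, it produces a strict coupling rather than merely a shift coupling, and it sidesteps the random clock-shift difficulty you flag at the end --- a difficulty which, in the paper's argument, never actually arises because the second angular path is \emph{constructed} as the reflection of the first, so no independence or adaptedness issue has to be checked. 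What the paper's version buys in exchange is a single self-contained coupling of the full subdiffusion, directly parallel to the proof of Proposition 4.9 in \cite{angst1}, whereas your first step silently relies on the (true, and available from the cited results of \cite{angst2016}) fact that the skew-product representation characterizes the law of the subdiffusion, so that the pair built from a shared temporal path and a coupled pair of spherical Brownian motions has the correct marginals.
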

\begin{proof}
The proof is  similar to the one of Proposition 4.9 in \cite{angst1}, for the sake of self-containess, we reproduce it here. 
Following \cite{cranston}, the triviality of the Poisson boundary is equivalent to the existence of a shift coupling between two sample paths of the process. To simplify the expressions, let us denote by $\Theta_s$ the normalized derivative of the angular process i.e. $\Theta_s := \dot{x}_s/|\dot{x}_s|$ and by $X_s$ the target process $X_s:=(t_s,\dot{t}_s, \Theta_s)$. By Proposition \ref{myprop}, under the hypothesis that $H^3 \notin \mathbb L^1$, the clock $C_s$ is almost surely divergent, hence the process $\Theta_s$ is ergodic on the sphere.
Let us consider two independant versions $X_s^i=(t_s^i, \dot{t}_s^i,\Theta_s^i)$ of the process $X_s$ starting from two distinct points $X_0^i=(t_0^i, \dot{t}_0^i,\Theta_0^i)$ for $i=1,2$. By the above Remark \ref{rem.indepcoupling},  there exists two random times (finite almost surely) $T_1$ and $T_2$ such that 
\[
(t_{T_1}^1, \dot{t}_{T_1}^1)=(t_{T_2}^2, \dot{t}_{T_2}^2) \quad a.s.
\]
Now define a new process $({\Theta'}_{s}^2)_{s \geq 0}$, such that ${\Theta'}_{s}^2$ coincides with ${\Theta}_{s}^2$ on the time interval $[0, T_2]$ and such that the future trajectory $({\Theta'}_{s}^2)_{s \geq T_2}$ is the reflection of $({\Theta}_{s}^1)_{s \geq T_1}$ with respect to the median hyperplan between the points $\Theta_{T^1}^1$ and $\Theta_{T^2}^2$. The new process ${X'}^2_s:=(t_s^2, \dot{t}_s^2, {\Theta'}_s^2)$ is again a version of the target process. Moreover, by ergodicity of the spherical process, the first time $T^*$ when the process $({\Theta}_{s}^1)_{s \geq T_1}$ intersects the median big circle between $\Theta_{T^1}^1$ and $\Theta_{T^2}^2$ is finite almost surely, and one has naturally ${X'}^2_{T_2+T^*}={X}_{T_1+T^*}^1$ almost surely, hence the result.
\end{proof}

Let us now remark that the generator $L$ of the full diffusion is equivariant under the action of $(\mathbb R^d,+)$ by translation. Indeed, if $L_X$ denotes the generator of the subdiffusion $X_s=(t_s, \dot{t}_s,\Theta_s)$, we have 
\[
L = \dot{x} \, \partial_x + L_X = \left( \frac{\dot{x}}{|\dot{x}|} \times |\dot{x}|\right) \, \partial_x + L_X = \left(\Theta \times \frac{\sqrt{\dot{t}^2-1}}{\alpha(t)} \right)\,\partial_x + L_X,
\]
and thus the action of $\mathbb R^d$ by translation on the variable $x$ leaves $L$ unchanged. In other words, we are in position to apply the d\'evissage scheme with 
\[
X_s=(t_s, \dot{t}_s,\dot{x}_s/|\dot{x}_s|) \in X:=(0, +\infty) \times [1, +\infty) \times \mathbb S^{d-1}, \quad Y_s:=x_s \in G=(\mathbb R^d,+).
\]
By Theorem \ref{theo.trivial}, we deduce that almost surely
\[
\text{Inv}(t_s, \dot{t}_s,\dot{x}_s/|\dot{x}_s|,x_s) = \text{Inv}(t_s, \dot{t}_s,\dot{x}_s/|\dot{x}_s|) \vee \sigma(x_{\infty}).
\]
Finally, using Proposition \ref{pro.etrivial}, we can then conclude that 
\[
\text{Inv}(t_s, \dot{t}_s,\dot{x}_s/|\dot{x}_s|,x_s) = \sigma(x_{\infty}).
\]

\paragraph{The non flat cases with $H^3 \notin \mathbb L^1$.} 

We treat here the case where the Riemannian fiber $M$ is $\mathbb S^d$ or $\mathbb H^d$. As before we suppose that $H^3 \notin \mathbb L^1$ and $I(\alpha) <+\infty$ and recall that, by the above Proposition \ref{myprop}, this is equivalent to $C_{\infty} = +\infty$ and $D_{\infty} < +\infty$ almost surely.  The idea to establish the remaining cases of  Theorem \ref{theo.finitehori} is to apply the devissage method after having lifted the dynamics of $(x_s, \dot{x}_s/ \vert x_s\vert) \in T^1M$ into the group $G$ of isometries of $M$. As seen previously, recall that $T^1M= G/K$ where $K=\mathrm{SO}(d-1)$, and the group $G$ is $\mathrm{SO}(d+1)$ or $\mathrm{PSO}(1,d)$ when $M$ is $\mathbb S^d$ or $\mathbb H^d$ respectively. Recall also that the projection of $G$ onto $T^1M$ is given by $g \mapsto (g(e_0),g(e_1))$. 
In order to lift the dynamics of the spatial components of the relativistic diffusion to $G$, let us introduce the following matrices belonging to $\mathrm{Lie}(G)$,
\[
H_0 :=e_0e_1^{*} - \kappa e_1 e_0^*, \ \mathrm{and} \  V_i:= e_1 e^{*}_i - e_i e^{*}_1, \quad  i=2,\dots,d,
\]
where $ \kappa =\pm1$ denotes again the curvature of the fiber $M$. Then consider a $(d-1)-$dimensional Brownian motion $(B^{i}_s)_{i=2, \dots d}$ independent of the martingale $M^{\dot{t}}$ and define the process $(g_s)_{s\geq 0}$ with values in $G$ as the solution of the stochastic differential equation
\begin{equation}\label{dyn.g}
dg_s= \dot{D}_s g_s H_0 ds + \sigma \sqrt{\dot{C}_s} \sum_{i=2}g_s V_i \circ dB_s^{i}.
\end{equation}
Setting $(x_s, \dot{x}_s /\vert \dot{x}_s \vert ):=(g_s(e_0),g_s(e_1))$, one can easily check that $(x_s, \dot{x}_s /\vert \dot{x}_s \vert )_{s \geq 0}$ has indeed the dynamics given by equation \eqref{eqn.xdot}. This means that the relativistic diffusion $(t_s, \dot{t}_s, \dot{x}_s/\vert \dot{x}_s \vert, x_s)_{s \geq 0}$
can be obtained by projecting a diffusion $(t_s, \dot{t}_s, g_s)_{s\geq 0}$ with values in $\R \times [1,+\infty[ \times G$.
The idea behing the proof is now to split the dynamics of $(g_s)_{s \geq 0}$  into a converging part and an ergodic one. For that, let us define an auxiliary process $(b_s)_{s\geq0}$ as the solution of the stochastic differential equation 
\begin{equation}\label{dyn.b}
db_s = \sigma \sqrt{\dot{C}_s} \sum_{i=2}^{d} b_s V_i \circ dB^{i}_s.
\end{equation}
By definition the process $(b_s)_{s \geq 0}$ stabilizes $e_0$ and so belongs to $\mathrm{SO}(d)= \{ g \in G; \ g(e_0)=e_0\}$. Define now $u_s:= g_s b_s^{-1}$, so that $g_s = u_sb_s$. Using Equations \eqref{dyn.g} and \eqref{dyn.b}, it follows that
\begin{equation}\label{eq.u}
d u_s = \dot{D}_s u_s b_s H_0 b_s^{-1} ds.
\end{equation}
The relativistic diffusion $(t_s, \dot{t}_s, \dot{x}_s/\vert \dot{x}_s \vert, x_s)_{s \geq 0}$ can thus be obtained as the projection of the new diffusion $(t_s, \dot{t}_s, b_s, u_s)_{s \geq 0}$, which takes values in $\R \times [1,+\infty[ \times \mathrm{SO}(d) \times G$. The projection map is given by
\[
\begin{matrix}
\hat{\pi} :&  \R \times [1,+\infty[ \times \mathrm{SO}(d) \times G &\longmapsto& \R \times [1,+\infty[ \times T^1 M \\ 
&(t, \dot{t}, u, b) &\longmapsto & (t, \dot{t}, u(e_0), ub(e_1) ).
\end{matrix}
\]
In order to apply the devissage scheme for the new diffusion $(t_s, \dot{t}_s, b_s, u_s)_{s \geq 0}$, let us first show the following result.
\begin{prop}\label{prop.predeviss}
Under the condition $C_\infty =+\infty$ and $D_\infty < +\infty$ the two following points occur
\begin{enumerate}
\item The process $u_s$ converges almost surely to a random point $u_{\infty} \in G$.
\item The Poisson boundary of $(t_s, \dot{t}_s, b_s)_{s \geq 0}$ is trivial.
\end{enumerate}
\end{prop}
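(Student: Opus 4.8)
I would establish the two assertions by quite different means, so I treat them in turn.

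The first point is essentially a deterministic argument, since Equation \eqref{eq.u} governing $(u_s)$ carries no martingale part: it is a pathwise ordinary differential equation of the form $\dot u_s = \dot D_s\, u_s\, \omega_s$ with $\omega_s := b_s H_0 b_s^{-1}$, driven by the finite-variation clock $\dot D_s$. The key remark is that $(b_s)$ takes values in the compact group $\mathrm{SO}(d)$, whose elements are genuine Euclidean-orthogonal matrices (in the hyperbolic case they fix the timelike vector $e_0$ and rotate the spacelike hyperplane), so that conjugation preserves the operator norm and $\|\omega_s\| = \|H_0\|$ stays bounded. Under the standing hypothesis $D_\infty < +\infty$ the total angular velocity is then integrable,
\[
\int_0^{+\infty} \dot D_s\, \|\omega_s\|\, ds \;\leq\; \|H_0\|\, D_\infty \;<\; +\infty .
\]
A Grönwall estimate first gives $\sup_{s\geq 0}\|u_s\| < +\infty$, and then the Cauchy bound $\|u_t-u_s\| \leq \sup_r\|u_r\| \int_s^t \dot D_r \|\omega_r\|\, dr \to 0$ as $s,t\to+\infty$ shows that $(u_s)$ converges. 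Since $G$ is closed in $GL(d+1,\R)$, being defined by the preservation of a non-degenerate quadratic form (which also forces invertibility of any limit), the limit $u_\infty$ belongs to $G$.

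For the second point I would follow exactly the coupling strategy already used in the proof of Proposition \ref{pro.etrivial}, with the group $\mathrm{SO}(d)$ playing the role previously played by the sphere. Following \cite{cranston}, the triviality of the invariant sigma-field of $(t_s,\dot t_s, b_s)$ is equivalent to the existence of a successful shift-coupling between two copies started from arbitrary points. I would start two independent copies $(t^i_s,\dot t^i_s, b^i_s)$, $i=1,2$, and first invoke Remark \ref{rem.indepcoupling}: the temporal subdiffusions admit an automatic coupling, so there exist finite random times $T_1,T_2$ with $(t^1_{T_1},\dot t^1_{T_1})=(t^2_{T_2},\dot t^2_{T_2})$ almost surely, after which the temporal trajectories, hence the clocks $C^1$ and $C^2$ driving the $b$-components, can be synchronized.

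Once the clocks agree, the processes $(b^1_{T_1+r})_{r\geq0}$ and $(b^2_{T_2+r})_{r\geq0}$ are two left-invariant, time-changed Brownian motions on the compact group $\mathrm{SO}(d)$, run by the same clock which diverges since $C_\infty=+\infty$. As the fields $(V_i)_{2\leq i\leq d}$ bracket-generate $\mathrm{Lie}(\mathrm{SO}(d))$, this diffusion is hypoelliptic and ergodic with Haar invariant measure, and two such copies can be coupled precisely as the spherical processes were in Proposition \ref{pro.etrivial}: reflecting one trajectory across the appropriate symmetry and using ergodicity, one produces a finite random time $T^*$ after which the two $b$-components coincide and stay equal, the divergence $C_\infty=+\infty$ ensuring that $T^*$ is finite in real time. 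Setting $S_1:=T_1+T^*$ and $S_2:=T_2+T^*$ then yields the shift-coupling $(t^1_{S_1+\cdot},\dot t^1_{S_1+\cdot},b^1_{S_1+\cdot})=(t^2_{S_2+\cdot},\dot t^2_{S_2+\cdot},b^2_{S_2+\cdot})$, which is the desired conclusion.

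The first point being a soft estimate, the main obstacle is the second: one must carefully weld the automatic coupling of the temporal part to a genuine coupling of the Brownian motion on $\mathrm{SO}(d)$, realizing both on a common probability space and checking that synchronizing the clocks does not alter the marginal laws of the two copies. The passage from the sphere $\mathbb S^{d-1}$ to the non-abelian compact group $\mathrm{SO}(d)$ is the one place where the reflection coupling of Proposition \ref{pro.etrivial} requires genuine adaptation, and I expect the verification that the coupled $b$-components remain equal after $T^*$ to be the delicate step.
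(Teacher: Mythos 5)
Your treatment of the first point is the paper's argument in matrix clothing: the paper likewise observes that Equation \eqref{eq.u} has no martingale part, bounds $\Vert b_s H_0 b_s^{-1}\Vert$ uniformly by compactness of $\mathrm{SO}(d)$, concludes that the $C^1$ path $(u_s)_{s\geq 0}$ has total length at most $\Vert H_0\Vert\, D_\infty < +\infty$ for a left-invariant metric on $G$, and gets convergence by completeness; your Gr\"onwall-plus-Cauchy estimate together with closedness of $G$ in $GL(d+1,\mathbb{R})$ is an equivalent packaging of the same soft fact.

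For the second point your overall scheme is also the paper's: the Cranston--Wang shift-coupling criterion, coupling of the temporal parts first via Remark \ref{rem.indepcoupling}, the observation that the clock increments then agree, and a coupling of the remaining compact-group component, with $C_\infty=+\infty$ converting coupling in intrinsic time into coupling in real time. However, the one step you leave to "genuine adaptation" is exactly where your proposal, as written, would break: there is no analogue of "reflection across the median hyperplane" on the non-abelian group $\mathrm{SO}(d)$ under which the law of the left-invariant Brownian motion is preserved and meeting times are controlled --- mirror coupling is a device tied to the spherical symmetry used in Proposition \ref{pro.etrivial}, and transplanting it is a dead end. The paper does something different at this spot. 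It first writes $b_s = \hat{b}_{C_s}$, where $\hat{b}$ is an \emph{autonomous} left-invariant Brownian motion on $\mathrm{SO}(d)$ built from the time-changed Brownian motion $\hat{B}_r := \int_0^{C_r^{-1}}\sqrt{\dot C_s}\,dB_s$, hence independent of the temporal subdiffusion. This explicit factorization does two jobs at once. First, it reduces the problem to coupling two copies of one fixed ergodic hypoelliptic diffusion on a compact group, for which a successful coupling exists by ergodicity alone (this is the adaptation of Proposition 3.6.3 of \cite{kinetic}; no reflection structure is invoked). Second, the independence from the temporal part is precisely what legitimizes the gluing you worried about: the modified processes $Z^i$, which follow the original trajectories up to $T_i$ and thereafter run the coupled copies $\hat b^i$ time-changed by the common clock increment $\int_{T_i}^{s} dr/(|\dot t^i_r|^2-1)$, still have the law of $(t_s,\dot t_s,b_s)$-diffusions, and they coincide forever after the coupling time by construction. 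That time-change representation is the missing idea that turns your flagged "delicate step" into a routine verification.
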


\begin{proof}
Let us start with the first point. By definition and by Equation \eqref{eq.u}, the process $(u_s)_{s \geq 0}$ is a $C^1$ path in $G$ and for any left invariant metric $\Vert \cdot \Vert$ on $G$, the total length of $(u_s)_{s \geq 0}$ is $\int_0^{+\infty} \dot{D}_s \Vert b_s H_0 b_s^{-1} \Vert ds$. But since the process $(b_s)_{s \geq 0}$ evolves in a compact set $\mathrm{SO}(d)$, one has the uniform estimate $\sup_{s\geq 0} \Vert b_s H_0 b_s^{-1} \Vert <+\infty$. Then, since $D_\infty < +\infty$ almost surely, it follows that the total length of $(u_s)_{s \geq 0}$ is finite, and  by completeness, $u_s$ converges almost surely.
Let us now give the proof of the second point of the statement. 
The arguments  are similar to those in the proof of Proposition 3.6.3 in \cite{kinetic}, which we adapt here to our context. Let us denote by $C_s^{-1}$ the generalized inverse of the clock $C_s$ and define the process $(\hat{b}_r)_{r\geq 0}$ in $\mathrm{SO}(d)$ as the solution of the stochastic differential equation
\[
d \hat{b}_r = \sigma \sum_{i=2}^{d} \hat{b}_r V_i \circ d\hat{B}^{i}_r, \quad \text{where} \;\; \hat{B}_r:= \int_0^{C^{-1}_r} \sqrt{\dot{C}_s}dB_s,
\]
i.e. $\hat{B}_r$ is a  $(d-1)-$dimensional Brownian motion independent of the martingale $M^{\dot{t}}$. Then, by definition, one gets that $b_s = \hat{b}_{C_s}$ or in other words, $(b_s)_{s \geq 0}$ is a time changed of the ergodic left invariant diffusion $(\hat{b}_r)$, taking values in $\mathrm{SO}(d)$ and  independent of the temporal process $(t_s, \dot{t}_s)_{s \geq 0}$.
As in the proof of Proposition \ref{pro.etrivial} above, following \cite{cranston}, recall that to get the triviality of the Poisson boundary, it is sufficient to construct a shift coupling between any two versions of the diffusion started from different initial conditions. 
So let $\big(t_s^{1}, \dot{t}_s^{1}, b^{1}_s\big)_{s \geq 0}$ and $\big(t_s^{2}, \dot{t}_s^{2}, b^{2}_s\big)_{s \geq 0}$ be two independant versions of the $\big(t_s,\dot t_s,b_s\big)$-diffusion and starting from 
$\big(t_0^{1}, \dot{t}_0^{1}, b_0^{1}\big)$ and $\big(t_0^{2}, \dot{t}_0^{2}, b_0^{2}\big)$ respectively.  We then know from Remark \ref{rem.indepcoupling} above that there exists coupling times $T_1$ and $T_2$ that are finite $\Prob_{(t_0^1, \dot{t}_0^1, b^1_0)}-$almost surely and $\Prob_{(t_0^2, \dot{t}_0^2, b^2_0)}-$almost surely respectively, and such that one can modify the process $\big(t_s^1, \dot{t}_s^1, b^1_s\big)_{s \geq 0}$ to get 
\[
\big(t_{T_1+s}^1, \dot{t}_{T_1+s}^1\big) = \big(t_{T_2+s}^2, \dot{t}_{T_2+s}^2\big), \quad \forall s \geq 0.
\]
In particular, for all $s \geq 0$, we have 
\[
\int_{T_1}^{T_1+s}\frac{dr}{|\dot t_r^1|^2-1} = \int_{T_2}^{T_2+t}\frac{dr}{|\dot t_r^2|^2-1}.
\]
Since the left invariant Brownian motion $\hat{b}_s$ in $\mathrm{SO}(d-1)$ is ergodic, one can find two Brownian motions $(\hat{b}^1(s))_{s\geq 0}$ and $(\hat{b}^2(s))_{s\geq 0}$ on $\mathrm{SO}(d-1)$, started from $b^1_{T_1}$ and $b^2_{T_2}$ respectively, which are independant of the temporal subdiffusions and which couple almost surely in finite time. Then, the processes $Z^i_s$, $i=1,2$, defined by the formulas
\begin{equation*}
Z^i_s= \left \lbrace \begin{array}{ll}
\displaystyle{\big(t_s^i, \dot{t}_s^i, b^i_s\big)}, & \textrm{for } 0\leq s \leq T_i, \\
\\
\displaystyle{ \left(t_s^i, \dot{t}_s^i, \hat{b}^i \left( \int_{T_i}^s\frac{dr}{\vert\dot t_r^i|^2 -1}\right)\right)}, & \textrm{for }s\geq T_i,
\end{array}\right.
\end{equation*}
have the laws of $\big(t_s,\dot t_s,b_s\big)$-diffusions started from $\big(t_0^1, \dot{t}_0^1, b^1_0\big)$ and $\big(t_0^2, \dot{t}_0^2, b^2_0\big)$, respectively and will couple in finite time almost surely, hence the result.
\end{proof}

Having established Proposition \ref{prop.predeviss}, we can now apply the d\'evissage scheme to the hypoelliptic diffusion$(t_s, \dot{t}_s, b_s, u_s)_{s\geq0}$.  Namely, setting $X_s:=(t_s, \dot{t}_s, b_s)$ and $Y_s=u_s$, Theorem \ref{theo.trivial} ensures that
\[
\mathrm{Inv}((t_s, \dot{t}_s, b_s, u_s)_{s\geq 0}) = \sigma(u_\infty), \quad \Prob_{(t_0,\dot{t}_0, b_0, u_0)}-\mathrm{a.s}.  
\]
In other words, we have identified the Poisson boundary of the lifted diffusion.
We need now to come back to the relativistic diffusion and check that, under the  projection $\hat{\pi}$, the only invariant information remaining is $u_\infty (e_0)$, the first column vector of the matrix $u_\infty$. For this, let us prove the following Lemma.

\begin{lemm}\label{invk}
Let $k\in \mathrm{SO}(d)$, the law of $(u_s)_{s \geq 0}$ under $\mathbb P_{(t,\dot{t},uk,k^{-1})}$  is the one of $(u_s k)_{s \geq 0}$ under $\mathbb P_{(t, \dot{t}, u, \mathrm{Id})}$.
\end{lemm}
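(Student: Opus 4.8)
The plan is to relate the two diffusions by an explicit \emph{deterministic} change of variables carried out pathwise, built from the left-invariance of the auxiliary process $b_s$ and a direct substitution in the equation \eqref{eq.u} for $u_s$. Throughout I would keep the temporal subdiffusion $(t_s,\dot t_s)$ and the driving Brownian motion $(B^i_s)_{i=2,\dots,d}$ fixed, since the coefficients $\dot C_s$ and $\dot D_s$ entering \eqref{dyn.b} and \eqref{eq.u} depend only on $(t_s,\dot t_s)$ and are therefore unaffected by any transformation of the spatial variables.

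First I would record that the $b$-equation \eqref{dyn.b} is driven by the left-invariant vector fields $g\mapsto gV_i$, so it is invariant under left translations: if $(u_s,b_s)_{s\ge0}$ denotes the solution of \eqref{dyn.b}--\eqref{eq.u} started from $(u,\mathrm{Id})$ with noise $(B^i_s)$, then $b'_s:=k^{-1}b_s$ solves \eqref{dyn.b} started from $k^{-1}$ with the same noise, since $d b'_s=k^{-1}db_s=\sigma\sqrt{\dot C_s}\sum_{i=2}^{d} b'_s V_i\circ dB^i_s$. Next I would feed this into \eqref{eq.u} via the ansatz $u'_s:=u_sk$. The key algebraic identity is
\[
u'_s b'_s H_0 (b'_s)^{-1}=(u_sk)(k^{-1}b_s)H_0(b_s^{-1}k)=u_s b_s H_0 b_s^{-1}k,
\]
from which $du'_s=(du_s)\,k=\dot D_s\,u_sb_sH_0b_s^{-1}k\,ds=\dot D_s\,u'_sb'_s H_0(b'_s)^{-1}\,ds$, so that $(u'_s)$ solves \eqref{eq.u} relative to $(b'_s)$ with initial value $u'_0=uk$. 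Hence on a single probability space the pair $(u_sk,\,k^{-1}b_s)$ is exactly a solution of the coupled system \eqref{dyn.b}--\eqref{eq.u} started from $(uk,k^{-1})$.

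Finally, once $(t_s,\dot t_s)$ and the driving noise are fixed the coupled system \eqref{dyn.b}--\eqref{eq.u} is pathwise unique (the $b$-part is a Stratonovich SDE with smooth coefficients on the compact group $\mathrm{SO}(d)$ and the $u$-part is then a driven ordinary differential equation), so the pair realized above \emph{is} the solution started from $(uk,k^{-1})$. Taking its $u$-marginal yields that $(u_s)_{s\ge0}$ under $\mathbb P_{(t,\dot t,uk,k^{-1})}$ has the law of $(u_sk)_{s\ge0}$ under $\mathbb P_{(t,\dot t,u,\mathrm{Id})}$, which is the claim. The only points requiring a little care are that the two deterministic multiplications (on the left by $k^{-1}$, on the right by $k$) commute with the Stratonovich differentials---true because $k$ is constant, so no correction terms appear---and that the factors $\dot C_s,\dot D_s$ are genuinely untouched; neither is a real obstacle, and the substantive content is entirely the left-invariance of \eqref{dyn.b} together with the cancellation $k\,k^{-1}=\mathrm{Id}$ in the displayed identity.
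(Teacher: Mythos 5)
Your proof is correct and is essentially the paper's own argument: the paper performs the inverse change of variables, starting from a trajectory under $\mathbb P_{(t,\dot t,uk,k^{-1})}$ and setting $u'_s:=u_sk^{-1}$, $b'_s:=kb_s$, then concludes by uniqueness of the solution of the same SDE/martingale problem, exactly as you do via the identity $u'_sb'_sH_0(b'_s)^{-1}=u_sb_sH_0b_s^{-1}k$ and left-invariance of \eqref{dyn.b}. The only (immaterial) differences are the direction of the translation and your explicit appeal to pathwise uniqueness where the paper invokes the martingale problem.
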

\begin{proof}
Consider a trajectory $(t_s, \dot{t}_s, u_s, b_s)_{s \geq 0}$ starting at  $(t,\dot{t},uk,k^{-1})$. Set $u'_s:= u_s k^{-1}$ and $b'_s:= kb_s$. Thus, $u'_0= u$, $b'_0 = \mathrm{Id}$ and 
\begin{align*}
d b'_s = \sigma \sqrt{\dot{C}_s} \sum_{i=2}^d b'_s V_i \circ dB^{i}_s, \quad
d u'_s =  D_s u'_s b'_s H_0 (b'_s)^{-1} ds. 
\end{align*}
Solving the same stochastic differential equation / martingale problem,  $(t_s, \dot{t}_s, u'_s, b'_s)_{s\geq0}$ has then the same law as $(t_s, \dot{t}_s, u_s, b_s)_{s \geq 0}$ under $\mathbb P_{(t,\dot{t}, u, \mathrm{Id})}$. 
\end{proof}
Let us now show that the Poisson boundary of the relativistic diffusion coincides with $\sigma(x_{\infty})$ up to negligeable sets. Equivalently, we have to show that any bounded harmonic function can be written as the expectation of a bounded measurable functional of $x_{\infty}$. So let us consider a function $f: \R \times [1,+\infty [ \times T^1M \to \R$ which is bounded and harmonic with respect to the generator of the relativistic diffusion $(t_s, \dot{t}_s, \dot{x}_s/\vert \dot{x}_s \vert, x_s)_{s\geq 0}$. Then, $f\circ \pi : \R \times [1,+\infty [ \times G \times \mathrm{SO}(d) \to \R$ is a bounded harmonic function with respect to the generator of the lifted diffusion  $(t_s, \dot{t}_s, \dot{x}_s/\vert \dot{x}_s \vert, u_s, b_s)_{s \geq 0}$. Since the Poisson boundary of the lifted diffusion coincides with $\sigma(u_{\infty})$ almost surely, there exists a function $F: G \to \R$ measurable and bounded such that for all $(t,\dot{t}, u, b) \in \R \times [1,+\infty [ \times G \times \mathrm{SO}(d)$ 
\[
f \circ \pi (t,\dot{t}, u, b) = \mathbb{E}_{(t,\dot{t}, u, b)} [ F(u_\infty )].
\]
Fix now $(t, \dot{t}, \frac{\dot{x}}{\vert \dot{x} \vert}, x) \in \R \times [1, +\infty[ \times T^1 M$ and $u\in G$ such that 
\[
\pi(t, \dot{t}, u, \mathrm{Id} ) = (t, \dot{t}, \frac{\dot{x}}{\vert \dot{x} \vert}, x).
\]
Since for all $k \in \mathrm{SO}(d)$, $\pi (t, \dot{t}, uk, k^{-1}) = \pi(t, \dot{t}, u, \mathrm{Id} )$ it goes,
\[
f\left (t, \dot{t}, \frac{\dot{x}}{\vert \dot{x} \vert}, x\right )= \mathbb{E}_{(t, \dot{t}, uk, k^{-1})} [F(u_\infty)].
\]
By Lemma \ref{invk}, $u_\infty$ has under $\mathbb P_{(t, \dot{t}, uk, k^{-1})}$ the same law as $u_\infty k$ under $\mathbb P_{(t, \dot{t}, u, \mathrm{Id})}$ and so, for all $k\in SO(d)$ it comes
\[
f\left (t, \dot{t}, \frac{\dot{x}}{\vert \dot{x} \vert}, x\right )= \mathbb{E}_{(t, \dot{t}, u, \mathrm{Id})} [ F(u_\infty k) ].
\]
Integrating $k$ with respect to the Haar measure of $\mathrm{SO}(d)$, we get
\[
f\left (t, \dot{t}, \frac{\dot{x}}{\vert \dot{x} \vert}, x\right )=  \mathbb{E}_{(t, \dot{t}, u, \mathrm{Id})} \left [ \int_{\mathrm{SO}(d)} F(u_\infty k) dk \right ].
\]
Consider now a measurable section $S: M = G/\mathrm{SO}(d) \to G$ such that $S \circ \pi = \mathrm{Id} $ on $G$. Set, for $x \in M$ 
\[
\tilde{F}(x) := \int_{\mathrm{SO}(d)} F(S(x)k) dk.
\]
Then, defining $x_\infty:= u_\infty(e_0)$ one obtains finally,
\[
f\left (t, \dot{t}, \frac{\dot{x}}{\vert \dot{x} \vert}, x\right ) =  \mathbb{E}_{(t, \dot{t}, u, \mathrm{Id})} [\tilde{F}(x_\infty)],
\]
hence the result.

\subsubsection{Robertson--Walker space-times with infinite horizon}\label{sec.RWinfinite}
This last section is devoted to the proofs of Theorems \ref{theo.infhorEucli}, \ref{theo.infhorSpher} and \ref{theo.infhorHyp}. We thus assume that the horizon is infinite, \emph{i.e} $I(\alpha) = + \infty$.
In view of the hypotheses stated in Section \ref{sec.geoRW}, the non-integrability of its inverse implies that the torsion function $\alpha$ has polynomial growth, i.e. we have $\lim_{t \to + \infty} H(t) \times t = c$ with $c\in (0,1)$. This implies also that $H^3 \in \mathbb L^1$, and therefore by Proposition \ref{myprop}, we are dealing with a case where $C_{\infty}<+\infty$ and $D_{\infty}=+\infty$ almost surely. In this setting, the process $\dot{t}_s$ is transient almost surely and goes to infinity at an exponential rate.

\begin{prop}[Proposition 4.3 in \cite{angst2016}] \label{prop.asympdott}
If the torsion function $\alpha$ is such that its logarithmic derivative $H=\alpha'/\alpha$ satisfies $\lim_{t \to + \infty} H(t) \times t = c \in (0,1)$, then, almost surely as $s$ goes to infinity, we have
\[
\frac{1}{s}\log \dot{t}_s \underset{s \to +\infty}{\longrightarrow} \frac{d-1}{2}\frac{\sigma^2}{1+c}, \quad \frac{1}{s}\log \alpha(t_s) \underset{s \to +\infty}{\longrightarrow}\frac{d-1}{2}\frac{\sigma^2 c}{1+c}.
\]
\end{prop}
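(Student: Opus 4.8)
The plan is to reduce everything to the one--dimensional temporal subdiffusion $(t_s,\dot t_s)$, for which transience (Proposition 3.3 of \cite{angst2016}) already guarantees that $\dot t_s\to+\infty$ almost surely, and to extract the two exponential rates from a single It\^o computation combined with the growth hypothesis $H(t)\,t\to c$. The idea is that $\log\dot t_s$, $\log\alpha(t_s)$ and $\log t_s$ all grow linearly in $s$, with rates tied together by two elementary asymptotic relations, leaving only one scalar to be pinned down.

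First I would apply It\^o's formula to $\log\dot t_s$. Using the temporal equation together with the bracket $d\langle M^{\dot t}\rangle_s=(\dot t_s^2-1)\,ds$, one obtains
\begin{equation}\label{eq.logdott}
d\log\dot t_s=\Big[-H(t_s)\dot t_s+\tfrac{(d-1)\sigma^2}{2}+\varepsilon_s\Big]\,ds+\tfrac{\sigma}{\dot t_s}\,dM^{\dot t}_s,
\end{equation}
where $\varepsilon_s=\tfrac{H(t_s)}{\dot t_s}+\tfrac{\sigma^2}{2\dot t_s^{2}}\to0$ since $\dot t_s\to\infty$. Recognising that $H(t_s)\dot t_s\,ds=d\log\alpha(t_s)$, and observing that the martingale $\int_0^s\tfrac{\sigma}{\dot t_u}\,dM^{\dot t}_u$ has bracket $\int_0^s\sigma^2(1-\dot t_u^{-2})\,du\sim\sigma^2 s$, the strong law for martingales together with Ces\`aro averaging of $\varepsilon_s$ yields the key identity
\begin{equation}\label{eq.sumrate}
\frac1s\big(\log\dot t_s+\log\alpha(t_s)\big)\xrightarrow[s\to\infty]{}\frac{(d-1)\sigma^2}{2}.
\end{equation}

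Next I would exploit the hypothesis $H(t)\,t\to c$. Since $\tfrac{d}{ds}\log\alpha(t_s)=H(t_s)\dot t_s$ and $\tfrac{d}{ds}\log t_s=\dot t_s/t_s$, the ratio of these two derivatives equals $H(t_s)\,t_s\to c$; comparing the corresponding divergent additive functionals gives $\log\alpha(t_s)\sim c\,\log t_s$. It then remains to compare $\log\dot t_s$ with $\log t_s$. Introducing $q_s:=\log(\dot t_s/t_s)$ and writing $H(t_s)\dot t_s=(c+o(1))e^{q_s}$, a direct computation from \eqref{eq.logdott} shows that $q_s$ solves an asymptotically autonomous scalar SDE,
\begin{equation}\label{eq.qSDE}
dq_s=\Big[\tfrac{(d-1)\sigma^2}{2}-(1+c)e^{q_s}+o(1)\Big]\,ds+\sigma_s\,dW_s,\qquad \sigma_s^{2}=\sigma^2(1-\dot t_s^{-2})\to\sigma^2 .
\end{equation}

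The \emph{main obstacle} is to control $q_s$, and here I claim $q_s=o(s)$ almost surely. This should follow from the confining nature of the drift in \eqref{eq.qSDE}: for $q_s$ large the term $-(1+c)e^{q_s}$ dominates and pushes $q_s$ down super-exponentially, whereas for $q_s$ very negative the drift is close to the positive constant $\tfrac{(d-1)\sigma^2}{2}$ and pushes $q_s$ up; combined with the bounded, non-degenerate diffusion coefficient this makes $q_s$ positive recurrent, so that $\sup_{u\le s}|q_u|=O(\log s)=o(s)$. Making this comparison with a genuinely autonomous confining diffusion rigorous in the presence of the vanishing corrections is the delicate technical point. Once $q_s=o(s)$ is granted, I would substitute $\log\dot t_s=\log t_s+q_s$ and the relation $\log\alpha(t_s)\sim c\log t_s$ into \eqref{eq.sumrate} to get $(1+c)\log t_s=\tfrac{(d-1)\sigma^2}{2}\,s+o(s)$, hence $\tfrac1s\log t_s\to \tfrac{(d-1)\sigma^2}{2(1+c)}=:a$. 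Since $q_s/s\to0$, this gives at once $\tfrac1s\log\dot t_s\to a=\tfrac{d-1}{2}\tfrac{\sigma^2}{1+c}$ and, through $\log\alpha(t_s)\sim c\log t_s$, $\tfrac1s\log\alpha(t_s)\to ca=\tfrac{d-1}{2}\tfrac{\sigma^2 c}{1+c}$, which are exactly the announced limits.
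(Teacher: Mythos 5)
First, a point of comparison that matters here: the paper does \emph{not} prove Proposition \ref{prop.asympdott}. It is quoted verbatim from Proposition 4.3 of \cite{angst2016} and used as an external input in Section \ref{sec.RWinfinite}, so there is no internal proof to measure your argument against; I therefore assess it on its own terms. On those terms, your skeleton is sound and the computations check out: the It\^o computation for $\log \dot t_s$ is correct (the constant $\frac{d\sigma^2}{2}-\frac{\sigma^2}{2}=\frac{(d-1)\sigma^2}{2}$ and the error term $\varepsilon_s=H(t_s)/\dot t_s+\sigma^2/(2\dot t_s^2)$ are right); the invocation of transience is legitimate, since $H(t)\sim c/t$ makes $H^d$ integrable at infinity so that $\dot t_s\to+\infty$ a.s.; the martingale $\int_0^s \sigma \dot t_u^{-1}\,dM^{\dot t}_u$ has bracket at most $\sigma^2 s$, so its contribution is $o(s)$; the relation $\log\alpha(t_s)\sim c\,\log t_s$ is immediate (it already holds for the deterministic functions, since $\log\alpha(t)\sim c\log t$); and the final algebra extracting both announced limits from your sum-rate identity together with $q_s=o(s)$ is correct.

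The only soft spot is exactly the one you flagged, and your proposed way of closing it is both imprecise and harder than necessary. Two remarks. First, your SDE for $q_s$ is slightly off: since $H(t_s)\dot t_s=(c+o(1))e^{q_s}$, the vanishing correction multiplies $e^{q_s}$, so the drift reads $\frac{(d-1)\sigma^2}{2}-(1+c+o(1))e^{q_s}+o(1)$; harmless for sign considerations, but it rules out naive literal comparison. Second, ``positive recurrence $\Rightarrow \sup_{u\le s}|q_u|=O(\log s)$'' is problematic as stated: $q_s$ is not a Markov process on its own (its coefficients are functionals of $(t_s,\dot t_s)$), so positive recurrence would first have to be given a meaning, and even for genuine ergodic diffusions such a sup bound needs tail estimates on the invariant measure. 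Fortunately none of this is needed: choose $s_0$ and $K$ so that for $s\ge s_0$ the drift of $q$ is $\le 0$ whenever $q_s\ge K$ and $\ge 0$ whenever $q_s\le -K$ (possible because $1+c+o(1)$ stays bounded away from $0$). Then a pathwise argument (consider the last time before $s$ at which $q\le K$; after that time the drift only pushes down) gives $q_s\le \max(q_{s_0},K)+2\sup_{s_0\le u\le s}|N_u-N_{s_0}|$, where $N$ is the martingale part, and symmetrically from below. Since $\langle N\rangle_s\le \sigma^2 s$, Dambis--Dubins--Schwarz and the law of the iterated logarithm yield $\sup_{s_0\le u\le s}|N_u-N_{s_0}|=O(\sqrt{s\log\log s})$ a.s., whence $|q_s|=O(\sqrt{s\log\log s})=o(s)$, which is all your proof requires. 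With that replacement your argument is complete and self-contained, and it supplies a proof of a statement the paper itself only cites.
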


\newpage
\paragraph{The flat case.}
Let us first give the proof of Theorem \ref{theo.infhorEucli} and determine the Poisson boundary of the relativistic Brownian motion in the case where the fiber is Euclidean. We will proceed by using twice the d\'evissage method. In virtue of Proposition 4.7 and its proof in \cite{angst2016}, the asymptotic behavior of the relativistic diffusion $(\xi_s, \dot{\xi}_s)=(t_s,\dot{t}_s, x_s, \dot{x}_s)$ is then the following: 
\begin{itemize}
\item the temporal subdiffusion $(t_s,\dot{t}_s)$ is transient.
\item the process $(t_s,\dot{t}_s, \dot{x}_s/|\dot{x}_s|)$ is also a subdiffusion of the whole process.
\item there exists a Brownian motion $\widetilde{\Theta}_s$ with value in $\mathbb S^{d-1}$, which is independant of the temporal subdiffusion and such that normalized angular derivative writes $\dot{x}_s/|\dot{x}_s| = \widetilde{\Theta}(C_s)$. 
\end{itemize}
To simplify the expression let us write $\Theta_s:=\dot{x}_s/|\dot{x}_s|$, i.e. $\Theta_s=\widetilde{\Theta}(C_s)$. Since the clock $C_s$ converges almost surely as $s$ goes to infinity, the process $\Theta_s$ converges almost surely to a point $\Theta_{\infty}$ in $\mathbb S^{d-1}$.
The diffusion $(t_s,\dot{t}_s, \Theta_s)$ belongs to $(0,+\infty) \times [1, +\infty) \times \mathbb S^{d-1}$ which we identify with $(0,+\infty) \times [1, +\infty) \times \mathrm{SO}(d-1)/\mathrm{SO}(d-2)$. Being a time-changed spherical Brownian motion, the dynamics of $\Theta_s$ is equivariant by the action of $\mathrm{SO}(d-1)$ and applying Theorem \ref{theo.homo} with $X_s:=(t_s,\dot{t}_s)$ and $Y_s=\Theta_s$, we thus get that almost surely
\[
\text{Inv}((t_s, \dot{t}_s,\Theta_s)_{s \geq 0}) = \text{Inv}((t_s, \dot{t}_s)_{s \geq 0})  \vee \sigma( \Theta_{\infty}).
\]
Since $\text{Inv}((t_s, \dot{t}_s)_{s \geq 0})$ is trivial by Proposition \ref{pro.temptrivial}, we get that 
\[
\text{Inv}((t_s, \dot{t}_s,\Theta_s)_{s \geq 0}) =  \sigma( \Theta_{\infty}).
\]
Now define
\[
\delta_s := x_s - \Theta_s \int_{t_0}^{t_s} \frac{du}{\alpha(u)}.
\]
Then, since $x_s= \int_0^s \dot{x}_u du = \int_0^{s} \Theta_u \dot{D}_u du$, one can decompose $\delta_s$ as
\begin{equation}\label{eq.defdelta}
\delta_s = \int_0^{s} \left ( \Theta_u - \Theta_{\infty} \right ) \dot{D}_u du + \big (  \Theta_{\infty}- \Theta_s \big)\int_0^{s} \dot{D}_u du + \Theta_s \left ( D_s-\int_{t_0}^{t_s} \frac{du}{\alpha(u)} \right).
\end{equation}
By the asymptotics estimates of Proposition \ref{prop.asympdott} above (see also Proposition 4.3 of \cite{angst2016} for more details), one can check that each of the three terms on the right hand side of the last Equation \eqref{eq.defdelta} converges almost surely as $s$ goes to infinity, so that $\delta_s$ converges to a random $\delta_\infty \in \R^d$. 
Moreover, from Equation \eqref{eq.defdelta} again, since the dynamics of $\delta_s$ only depends on $(t_s, \dot{t}_s,\Theta_s)$, it is clear that the generator of the diffusion $(t_s, \dot{t}_s,\Theta_s,\delta_s)$ is equivariant under the action of $(\mathbb R^d,+)$ by translation on the variable $\delta$. Therefore, we are in position to apply Theorem \ref{theo.trivial} with this time $X_s:=(t_s, \dot{t}_s,\Theta_s)$ and $Y_s:=\delta_s$. We thus get
\[
\text{Inv}((t_s, \dot{t}_s,\Theta_s, \delta_s)_{s \geq 0}) = \text{Inv}((t_s, \dot{t}_s,\Theta_s)_{s \geq 0}) \vee \sigma( \delta_{\infty})
\]
and thus, from the first d\'evissage result of the proof
\[
\text{Inv}((t_s, \dot{t}_s,\Theta_s, \delta_s)_{s \geq 0}) =  \sigma( \Theta_{\infty},\delta_{\infty}).
\]
To conclude, remark that the relativistic diffusion $(t_s, \dot{t}_s,\Theta_s,x_s)$ is in bijection with $(t_s, \dot{t}_s,\Theta_s,\delta_s)$ so that 
\[
\text{Inv}((t_s, \dot{t}_s,\Theta_s, x_s)_{s \geq 0}) = \text{Inv}((t_s, \dot{t}_s,\Theta_s,\delta_s)_{s \geq 0}) = \sigma(\Theta_{\infty}, \delta_{\infty}).
\]

\paragraph{The spherical case.}
We now give the proof of Theorem \ref{theo.infhorSpher} for a warped product with spherical fiber.
We proceed as before, i.e. we lift the relativistic diffusion $(t_s, \dot{t}_s, x_s, \dot{x}_s/\vert \dot{x}_s \vert )_{s\geq 0}$ to a diffusion $(t_s, \dot{t}_s, g_s)_{s\geq 0}$ where $g_s\in \mathrm{SO}(d+1)$ has the following dynamics
\[
dg_s = \dot{D}_s g_s H_0 ds +\sigma \sqrt{\dot{C}_s} \sum_{i=2}^d g_s V_i \circ dB_s^{i},
\]
with $H_0:= e_0 e^{*}_1 - e_1 e^{*}_0$, $V_i= e_1 e^{*}_i - e_i e^{*}_1$. Then by definition $x_s:= g_s(e_0)$ and $\dot{x}_s/\vert \dot{x}_s \vert := g_s (e_1)$.
Under the hypotheses of Theorem \ref{theo.infhorSpher}, we have $C_{\infty}<+\infty$ and $D_{\infty}=+\infty$ almost surely. 
The idea is then again to split the converging components from the diverging ones in the dynamics of $g_s$. As previously, the diverging clock $D_s$ is a function of the all trajectory $(\dot{t}_u, t_u)_{0\leq u \leq s}$ but not only of $(\dot{t}_s, t_s)$. To skip this difficulty, one defines the new clock $A(t):=\int_{t_0}^{t}du/\alpha(u)$ and, thanks to Lemma 4.6 of \cite{angst2016}, we know that $A(t_s)-D_s$ converges almost surely as $s$ goes to infinity. 
Let us now define
\[
\hat{u}_s := \exp \left ( A(t_s) H_0 \right ) =  \left ( \begin{matrix}  \cos( A(t_s)) &- \sin( A(t_s) ) & 0 \\ \sin( A(t_s) ) & \cos( A(t_s) ) & 0 \\ 0 & 0 & \mathrm{Id}  \end{matrix} \right ).
\]
Next we set $\hat{b}_s := g_s \hat{u}_{s}^{-1}$, and doing so we get rid of the diverging part in the dynamics of $g_s$. Precisely we have the following lemma.

\begin{lemm}
The process $(\hat{b}_s)_{s\geq0}$ converges almost surely to an asymptotic random variable $\hat{b}_{\infty}$.
\end{lemm}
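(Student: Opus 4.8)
The plan is to write down the stochastic differential equation satisfied by $\hat b_s = g_s \hat u_s^{-1}$ and to show that, once the diverging $H_0$-drift of $g_s$ has been absorbed into the deterministic rotation $\hat u_s$, the residual dynamics of $\hat b_s$ is driven solely by finite-variation and finite-energy terms, which forces almost sure convergence. The whole point of the auxiliary process $\hat u_s = \exp(A(t_s)H_0)$ is precisely that it rotates at the (divergent) rate carried by the $H_0$-direction of $g_s$, leaving behind a controllable remainder.

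First I would compute $d\hat b_s$ by the product rule. Since $\hat u_s$ is of bounded variation ($t_s$ being of finite variation, $\hat u_s$ has no martingale part) and commutes with $H_0$, one gets $d\hat u_s^{-1} = -\tfrac{\dot t_s}{\alpha(t_s)} H_0 \hat u_s^{-1}\,ds$. Inserting the given dynamics of $g_s$, writing $g_s = \hat b_s \hat u_s$ and using $\hat u_s H_0 \hat u_s^{-1}=H_0$, the two $H_0$-drift contributions combine into a single term with scalar coefficient $\dot D_s-\tfrac{\dot t_s}{\alpha(t_s)} = -\tfrac{\dot t_s-\sqrt{\dot t_s^2-1}}{\alpha(t_s)}=:-\varepsilon_s$, so that the drift reduces to $-\varepsilon_s\,\hat b_s H_0\,ds$. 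The martingale part becomes $\sigma\sqrt{\dot C_s}\sum_i \hat b_s\,\mathrm{Ad}(\hat u_s)V_i\circ dB_s^i$; converting it from Stratonovich to It\^o form adds a further drift whose coefficient is proportional to $\dot C_s$ (half the quadratic variation), with bounded matrix integrand.

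The crux is then two integrability facts. On the one hand, $\varepsilon_s=\tfrac{\dot t_s-\sqrt{\dot t_s^2-1}}{\alpha(t_s)}$ is exactly the derivative of $A(t_s)-D_s$, which converges almost surely by Lemma 4.6 of \cite{angst2016}, so $\int_0^{\infty}\varepsilon_s\,ds<+\infty$. On the other hand, $\int_0^{\infty}\dot C_s\,ds=C_\infty<+\infty$ almost surely under the standing hypotheses. Since $\hat b_s$ evolves in the compact group $\mathrm{SO}(d+1)$ and $\mathrm{Ad}(\hat u_s)V_i$ is bounded ($\hat u_s$ being a rotation), the integrands $\hat b_s H_0$ and $\hat b_s(\mathrm{Ad}(\hat u_s)V_i)^2$ are uniformly bounded. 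Hence the finite-variation part of $\hat b_s$ has finite total variation, and the continuous martingale part $M_s$ has finite energy $\langle M\rangle_\infty\le \sigma^2 C\, C_\infty<+\infty$ for some constant $C$. A path of finite total variation converges, and a continuous local martingale with finite quadratic variation converges almost surely (Dambis--Dubins--Schwarz); therefore $\hat b_s$ converges almost surely to some $\hat b_\infty\in\mathrm{SO}(d+1)$.

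I expect the main obstacle to be the bookkeeping of the first step: checking precisely that the divergent drift $\dot D_s$ of $g_s$ is annihilated up to the \emph{integrable} remainder $\varepsilon_s$, which rests essentially on the commutation $[\hat u_s,H_0]=0$ and on the convergence of $A(t_s)-D_s$. Once this exact cancellation is made explicit, the remaining argument is the routine semimartingale convergence outlined above, using only $C_\infty<+\infty$ and the compactness of $\mathrm{SO}(d+1)$.
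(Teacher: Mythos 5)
Your proof is correct, and it uses exactly the same ingredients as the paper's argument --- the commutation of rotations generated by $H_0$, the almost sure convergence of $D_s - A(t_s)$ (Lemma 4.6 of \cite{angst2016}), and the finiteness of $C_\infty$ --- but it packages them differently. The paper first introduces $u_s := \exp(D_s H_0)$ and $b_s := g_s u_s^{-1}$: with this choice the $H_0$-drift cancels \emph{exactly}, so $b_s$ solves a driftless Stratonovich equation run with the convergent clock $C_s$ and hence converges; then $\hat b_s = b_s\,(u_s \hat u_s^{-1})$, where $u_s\hat u_s^{-1} = \exp\bigl((D_s - A(t_s))H_0\bigr)$ converges by Lemma 4.6, and the product of two convergent factors converges. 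You instead differentiate $\hat b_s = g_s \hat u_s^{-1}$ directly, where the cancellation is only approximate, and you must control the residual drift $-\varepsilon_s \hat b_s H_0\, ds$; your observation that $\varepsilon_s = \frac{d}{ds}\bigl(A(t_s)-D_s\bigr)\ge 0$, so that convergence of $A(t_s)-D_s$ is equivalent to $\int_0^\infty \varepsilon_s\, ds < \infty$, is the step that makes this work --- note that it is the \emph{positivity} of $\varepsilon_s$ that upgrades convergence to integrability, so keep that explicit. The trade-off: the paper's multiplicative splitting avoids all It\^o-correction bookkeeping and isolates Lemma 4.6 in a purely deterministic factor, while your route is more direct --- a single SDE and a single finite-variation-plus-finite-energy-martingale argument --- and in effect inlines the convergence proof for $b_s$ that the paper only sketches. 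One small point to make explicit in your Stratonovich-to-It\^o step: the correction term is proportional to $\dot C_s$ with bounded integrand only because the Brownian motions $(B^i)$ are independent of the martingale $M^{\dot t}$ driving $(t_s,\dot t_s)$, so that $\sqrt{\dot C_s}$ and $\hat u_s$ have vanishing bracket with each $B^i$; this independence is built into the paper's construction of the lifted diffusion, but your argument does use it.
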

\begin{proof}
Let us introduce the process
\begin{align*}
u_s := \exp \left ( D_s  H_0 \right) = \left ( \begin{matrix}  \cos( D_s ) & -\sin( D_s ) & 0 \\ \sin( D_s) & \cos( D_s) & 0 \\ 0 & 0 & \mathrm{Id}  \end{matrix} \right ).  
\end{align*}
By  definition we have then $u_s \hat{u}_{s}^{-1} = \exp \left ( \big( D_s - A(t_s) \big) H_0 \right )$ and as recalled above, by Lemma 4.6 of \cite{angst2016}, the difference $D_s - A(t_s)$ converges almost surely and so does $u_s \hat{u}_{s}^{-1}$. 
Now, we can define $b_s := g_s u^{-1}_s$ which then solves de stochastic differential equation
\begin{align*}
d b_s = \sigma \sqrt{\dot{C}_s} \sum_{i=2}^d  b_s u_s V_i u^{-1}_s \circ dB^i_s.
\end{align*}
Since $u_s \in \mathrm{SO}(d+1)$ the term $u_s V_i u^{-1}_s$ is bounded. Therefore, since the clock $C_s$ converges almost surely, one deduce that $b_s$ converges almost surely as $s$ goes to infinity to a random variable $b_\infty \in G$. 
Finally, since $\hat{b}_s = g_s \hat{u}_s^{-1} = b_s u_s \hat{u}^{-1}_s$, one deduces that $\hat{b}_s$ also converges almost surely to some $\hat{b}_{\infty} \in G$. 
\end{proof}

Note that since $\hat{u}_s$ is a only function of $t_s$, the diffusion process $(t_s, \dot{t}_s, g_s)_{s\geq 0}$ is in fact in bijection with the diffusion process $(t_s, \dot{t}_s, g_s \hat{u}_s^{-1})_{s\geq 0}=(t_s, \dot{t}_s, \hat{b}_s)_{s\geq 0}$, and so they have the same invariant sigma field. Moreover, the process $\hat{b}_s$ solves the stochastic differential equation
\[
d \hat{b}_s =  \left (D_s - \frac{\dot{t}_s}{\alpha(t_s)} \right ) \hat{b}_s \hat{u}_s H_0 \hat{u}^{-1}_s ds + C_s \sum_{i=2}^d \hat{b}_s \hat{u}_s V_i \hat{u}_s^{-1} \circ dB_s^{i},
\]
whose dynamics only depends on $(\dot{t}_s, t_s)$. Therefore, the generator of the diffusion $(t_s, \dot{t}_s, \hat{b}_s)_{s\geq 0}$ is equivariant under the action of $G$ by translation and one can apply the devissage Theorem \ref{theo.trivial} with $X_s:=(t_s, \dot{t}_s)$ and $Y_s:= \hat{b}_s$, associated with Proposition \ref{pro.temptrivial}, to conclude that almost surely 
\[
\mathrm{Inv}((t_s, \dot{t}_s, g_s)_{s\geq 0})  =\mathrm{Inv}((t_s, \dot{t}_s, \hat{b}_s)_{s\geq 0}) =\mathrm{Inv}((t_s, \dot{t}_s)_{s \geq 0}) \vee  \sigma(\hat{b}_{\infty})= \sigma(\hat{b}_{\infty}).
\]
We now need to come back to the process $(\dot{x}_s/ \vert \dot{x}_s \vert, x_s)$. Recall that 
\begin{align*}
x_s &= g_s(e_0) = \hat{b}_s \hat{u}_s (e_0)= \cos(A(t_s) ) \hat{b}_s (e_0) + \sin(A(t_s)) \hat{b}_s (e_1),  \\ 
\frac{\dot{x}_s}{\vert x_s \vert} &= g_s(e_1) =  \hat{b}_s \hat{u}_s (e_1) = -\sin(A(t_s) ) \hat{b}_s (e_0) + \cos(A(t_s)) \hat{b}_s (e_1).
\end{align*}
But we also have the representation
\begin{align*}
\hat{b}_s (e_0) &= \cos(A(t_s) ) x_s  - \sin(A(t_s) ) \frac{\dot{x}_s}{\vert x_s \vert}, \\ 
\hat{b}_s (e_1) &= \sin(A(t_s)) x_s + \cos(A(t_s) ) \frac{\dot{x}_s}{\vert x_s \vert}
\end{align*}
so that the relativistic diffusion $(t_s, \dot{t}_s, \dot{x}_s/\vert \dot{x}_s \vert, x_s)_{s\geq 0}$ is in bijection with $(t_s, \dot{t}_s, \hat{b}_s(e_0), \hat{b}_s (e_1) )_{s\geq 0}$. Therefore, we deduce that
\[
\mathrm{Inv} ((t_s, \dot{t}_s, \dot{x}_s/\vert \dot{x}_s \vert, x_s)_{s\geq 0}) = \mathrm{Inv} ((t_s, \dot{t}_s, \hat{b}_s(e_0), \hat{b}_s (e_1) )_{s\geq 0}).
\]
Finally, viewing the unitary tangent space $T^1\mathbb S^d$ as the homogeneous space $\mathrm{SO}(d+1)/\mathrm{SO}(d)$, one concludes, invoking Theorem \ref{theo.homo} that $\mathrm{Inv} ((t_s, \dot{t}_s, \hat{b}_s(e_0), \hat{b}_s (e_1) )_{s\geq 0})=\sigma( \hat{b}_{\infty}(e_0), \hat{b}_{\infty}(e_1))$. And thus almost surely, we have  
\[
\mathrm{Inv}((t_s, \dot{t}_s, \dot{x}_s/\vert \dot{x}_s \vert, x_s)_{s\geq 0}) =  \sigma ( b_\infty(e_0), b_\infty(e_1) ),
\]
hence the result. Note that $b_\infty(e_0)$ and $b_\infty(e_1)$ are two orthogonal vectors of norm one, which generate a random plan. This intersection of this plan with we sphere is precisely the random big circle to which the process $(x_s)_{s \geq 0}$ is asymptotic, as illustrated in Figure \ref{fig.sphere}.

\paragraph{The hyperbolic case.}
Finally, let us give the proof of Theorem \ref{theo.infhorHyp} dealing with the case of an hyperbolic fiber.
To do so, we lift again the relativistic diffusion to the orthonormal frame bundle. 
Recall that $\Hyp^d$ is equivalently seen as the half pseudo-sphere of $\R^{1,d}$ and the homogenous space $\mathrm{PSO}(1,d)/\mathrm{SO}(d)$ and the relativistic diffusion $(\dot{t}_s, t_s, \dot{x}_s/\vert \dot{x}_s \vert , x_s)_{s \geq 0}$ can again be obtained as a projection, namely we can represent the spatial components as $x_s:= g_s (e_0)$, $\dot{x}_s/\vert \dot{x}_s \vert := g_s(e_1)$ where the diffusion $(\dot{t}_s, t_s, g_s)$ now takes values in $T\R^{*}_{+} \times \mathrm{PSO}(1,d)$ and solves the stochastic differential equation
\begin{align*}
d g_s = \dot{D}_s g_s H_1 ds +\sigma \sqrt{\dot{C}_s} \sum_{i=2}^{d} g_s V_i \circ dB_s^{i},
\end{align*}
with this time $H_1:= e_0 e^{*}_1 + e_1 e^{*}_0$ and $V_i:= e_i e^{*}_1 - e_1 e^{*}_i$, $i=2,\dots,d$. As above, under the hypotheses of Theorem \ref{theo.infhorHyp}, we have $C_{\infty}<+\infty$ and $D_\infty=+\infty$ almost surely. The idea is again to separate the converging part and the diverging part in the dynamics of $g_s$. For that, we introduce the process $(u_s)_{s \geq 0}$ with values in $K:=\mathrm{SO}(d)$ solving
\[
d u_s = \sigma \sqrt{\dot{C}_s} \sum_{i=2}^d u_s V_i \circ dB_s^{i}.
\]
Since $C_\infty < +\infty$, the process $u_s$ converges almost surely as $s$ goes to infinity to a  random point $u_\infty \in K$. 
Let us now consider the Iwasawa decomposition $NAK$ of $G= \mathrm{PSO}(1,d)$, see section \ref{sec.iwa} before, and decompose the process $g_s$ with values in $\mathrm{PSO}(1,d)$ as $g_s=n_s a_s k_s$, where  $n_s \in N$, $a_s \in A$ and $k_s \in K$ obbey the following dynamics
\begin{align*}
d k_s &= - \dot{D}_s \sum_{i=2}^d (e^{*}_{i} k_s u_s e_1 ) V_i k_s ds, \\
d a_s &=  \dot{D}_s  (e^{*}_{1} k_s u_s e_1) a_s H_0 ds, \\ 
d n_s &= \displaystyle{\dot{D}_s \exp \left({- \int_{0}^{s} \dot{D}_r (e^{*}_1 k_r u_r e_1) dr }\right) \sum_{i=2}^d (e^{*}_i k_s u_s e_1 ) (H_i + V_i ) ds.}
\end{align*}
Now consider the one dimensional process $(v_s)_{s \geq 0}$ with values in $[-1,1]$, defined as $v_s := e_{1}^{*}k_s u_s e_1$, and which solves the stochastic differential equation
\[
d v_s = \dot{D}_s (1- v_s^2) ds + \sigma \sqrt{\dot{C}_s} \sqrt{1- v_s^2} dW_s - \sigma^2 \dot{C}_s \frac{d-1}{2} v_s ds,
\]
where $W_s$ is the standard Brownian motion such that
\[
\sqrt{1- v_s^2} dW_s = \sum_{i=2}^d e_{1}^{*} k_s u_s e_i  dB_s^{i}.
\]

\begin{lemm} \label{lem.v}
The process $v_s$ converges almost surely to $1$ as $s$ goes to infinity. 
\end{lemm}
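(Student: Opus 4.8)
The plan is to work directly with the scalar It\^o equation satisfied by $v_s=e_1^*k_su_se_1$, exploiting the fundamental asymmetry between the two clocks: under the standing hypotheses one has $C_\infty<+\infty$ but $D_\infty=+\infty$ almost surely. Observe first that, being the scalar product of two unit vectors, $v_s$ stays in $[-1,1]$ by construction, so the diffusion coefficient $\sqrt{1-v_s^2}$ degenerates exactly at the two endpoints $\pm1$, which are the candidate limits. First I would establish convergence of $v_s$. The martingale part $N_s:=\sigma\int_0^s\sqrt{\dot C_u}\sqrt{1-v_u^2}\,dW_u$ has bracket $\langle N\rangle_s=\sigma^2\int_0^s\dot C_u(1-v_u^2)\,du\le\sigma^2 C_\infty<+\infty$, hence is $L^2$-bounded and converges a.s. to a finite $N_\infty$; similarly the finite-variation term $-\sigma^2\tfrac{d-1}2\int_0^s\dot C_uv_u\,du$ is bounded by $\sigma^2\tfrac{d-1}2 C_\infty$ and converges. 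Integrating the equation in the form
\[
v_s-\int_0^s\dot D_u(1-v_u^2)\,du=v_0+N_s-\sigma^2\tfrac{d-1}2\int_0^s\dot C_uv_u\,du,
\]
the right-hand side converges and $v_s$ is bounded, so the nondecreasing integral $\int_0^s\dot D_u(1-v_u^2)\,du$ is bounded above, hence $\int_0^\infty\dot D_u(1-v_u^2)\,du<+\infty$ a.s. and $v_s$ converges a.s. to some $v_\infty\in[-1,1]$.

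The second step identifies the limit among the two endpoints. Since $\int_0^\infty\dot D_u(1-v_u^2)\,du<+\infty$ while $\int_0^\infty\dot D_u\,du=D_\infty=+\infty$, the integrand $1-v_u^2$ cannot remain bounded away from $0$; combined with $v_u\to v_\infty$ this forces $1-v_\infty^2=0$, i.e. $v_\infty\in\{-1,1\}$.

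The hard part, and the step I expect to be the genuine obstacle, is excluding the repelling endpoint $v_\infty=-1$. Heuristically both drift terms point upward near $-1$, but the degeneracy of the noise there requires a careful boundary analysis. The clean way to see the mechanism is to reparametrise time by $\tau:=D_s$, which turns the equation into a \emph{vanishing-noise} perturbation
\[
d\tilde v_\tau=(1-\tilde v_\tau^2)\,d\tau-\sigma^2\tfrac{d-1}2\,\rho_\tau\,\tilde v_\tau\,d\tau+\sigma\sqrt{\rho_\tau}\,\sqrt{1-\tilde v_\tau^2}\,d\tilde W_\tau
\]
of the autonomous ODE $\dot{\tilde v}=1-\tilde v^2$, whose only unstable equilibrium is $-1$; here $\rho_\tau=\dot C_s/\dot D_s\to0$ and $\int_0^\infty\rho_\tau\,d\tau=\int_0^\infty\dot C_s\,ds=C_\infty<+\infty$, so the entire stochastic budget past a large time can be made arbitrarily small. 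To make the repulsion rigorous I would classify $-1$ as a non-attracting (inaccessible/entrance) boundary. Setting $\epsilon:=1+v$, the process behaves near $0$ like a squared-Bessel/CIR diffusion $d\epsilon\approx\bigl(\sigma^2\tfrac{d-1}2\rho+2\epsilon\bigr)d\tau+\sigma\sqrt{2\rho\,\epsilon}\,d\tilde W$, whose scale density is $\sim(1+v)^{-(d-1)/2}$ near $-1$; divergence of the corresponding scale integral (a Feller-type condition, which is met in the dimensions under consideration) shows that $-1$ cannot be an accumulation point of $v_s$. Equivalently, one checks that $(1+v_s)^{-p}$ is, up to a convergent correction, a positive supermartingale for a suitable exponent $p$: on the event $\{v_s\to-1\}$ this quantity would tend to $+\infty$, contradicting the a.s. convergence of a supermartingale bounded below. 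I would carry out this last verification by a localisation argument, working after the (a.s. finite) last time $v_s$ leaves a small neighbourhood of $-1$, so that the supermartingale inequality need only hold near the endpoint. Combining the three steps yields $v_\infty=1$ almost surely, which is precisely the assertion of the Lemma.

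I expect the delicate bookkeeping of the Itô correction versus the degenerate diffusion near $-1$ — i.e. making the boundary classification fully rigorous and dimension-uniform rather than heuristic — to be where the real work lies; the convergence and endpoint-identification steps are routine once the two-clock dichotomy $C_\infty<+\infty$, $D_\infty=+\infty$ is invoked.
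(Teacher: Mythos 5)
Your first two steps reproduce the paper's own proof exactly: since $C_\infty<+\infty$ almost surely, the martingale term (finite bracket) and the It\^o-correction term in the equation for $v_s$ converge a.s.; the nonnegative, nondecreasing integral $\int_0^s\dot D_u(1-v_u^2)\,du$ is then bounded because $|v_s|\le1$ and the other terms converge, hence it converges, so $v_s$ converges; and since $D_\infty=+\infty$, the limit must satisfy $1-v_\infty^2=0$. Where you and the paper part ways is your third step. The paper disposes of it in one sentence --- ``the only possible limit of $v_s$ making the integral convergent is $1$'' --- which, as written, does not distinguish $+1$ from $-1$: the endpoint $-1$ annihilates the integrand $1-v^2$ just as well, so convergence of the integral alone cannot exclude it. You are right that excluding the repelling endpoint is the one place where genuine work is required; in this respect your proposal is more careful than the published proof, which leaves that point implicit.

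Concerning your proposed treatment of the endpoint $-1$, the mechanism (a CIR-type comparison near $-1$ in the $D$-clock, with effective dimension $d-1$, so the boundary is non-attracting for $d\ge3$) is sound, with two caveats. First, the scale-function/Feller classification applies to time-homogeneous diffusions, whereas your coefficient $\rho_\tau$ is random and time-dependent, so the classification must be redone by hand --- as you anticipate. Second, your candidate supermartingale $(1+v_s)^{-p}$ has nonpositive drift near $-1$ only when $p\le\frac{d-3}{2}$, which is vacuous at $d=3$. A dimension-uniform and elementary way to finish is to apply It\^o's formula to $\log(1+v_s)$, whose drift is
\[
\dot D_s(1-v_s)-\frac{\sigma^2\dot C_s}{2}\cdot\frac{(d-2)v_s+1}{1+v_s}.
\]
On $\{v_s\le-1/2\}$ this is bounded below by $\dot D_s$ for all $s$ large: for $d\ge4$ the second term is nonnegative there, and for $d=3$ it equals $-\sigma^2\dot C_s/2$, which is dominated by $\dot D_s$ since $\dot C_s/\dot D_s\to0$ exponentially fast by Proposition \ref{prop.asympdott}. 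Hence on the event $\{v_s\to-1\}$ one would get, after localization at the hitting times of the levels $\{1+v=1/n\}$, an inequality of the form $\log(1+v_s)\ge \log(1+v_{s_0})+(D_s-D_{s_0})+M_s-M_{s_0}$ with $M$ a continuous local martingale, forcing $M_s\to-\infty$; this is a null event, since a continuous local martingale either converges (on the set where its bracket is finite) or oscillates (on the set where it is infinite). This closes the gap for all $d\ge3$, the range in which the paper works (cf.\ Proposition \ref{myprop}), and avoids any boundary classification of a time-inhomogeneous diffusion.
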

\begin{proof}
By definition, we have
\begin{align}
v_s = v_0 + \int_0^s \dot{D}_u (1- v_u^2) du +\sigma \int_0^s \sqrt{\dot{C}_u} \sqrt{1- v_u^2} dW_u - \sigma^2 \frac{d-1}{2} \int_0^s \dot{C}_u v_u du.\label{eq1}
\end{align}
Since the clock $C_s$ converges almost surely as $s$ goes to infinity, the two last terms of Equation \eqref{eq1} also converge almost surely. Moreover,  $s\mapsto \int_0^s \dot{D}_u (1- v_u^2) du$ is increasing and bounded (recall that $|v_s|\leq 1$), and thus also converges almost surely as $s$ goes to infinity. Therefore, all the terms on the right hand side of Equation \eqref{eq1} are converging, and so does $v_s$. But since the clock $D_s$ diverges almost surely, the only possible limit of $v_s$ making the integral $\int_0^s \dot{D}_u (1- v_u^2) du$ convergent is $1$, hence the result. 
\end{proof}
Let us now introduce the two other auxiliary  processes $(\Theta_s)_{s \geq 0}$ and $(\tilde{\delta_s})_{s \geq 0}$ defined as 
\[
\Theta_s:= k_s u_s e_1 \in\mathbb S^{d-1}, \qquad  \tilde{\delta_s} :=  \tilde{\delta_0} + \int_0^s \dot{D}_u v_u du - A(t_s) \in \mathbb R,
\]
where as in the proof of the spherical case, we have set $A(t_s)=\int_{t_0}^{t_s} \frac{1}{\alpha(u)} du$.
Since, by Lemma \ref{lem.v}, the process $v_s$ goes to $1$ almost surely, then $\Theta_s$ converges almost surely to a deterministic limit, namely the basis vector $e_1$. Moreover, the process $(\tilde{\delta_s})_{s \geq 0}$ is also convergent. 

\begin{lemm}
The process $\tilde{\delta}_s$ converges almost surely as $s$ goes to infinity to a random $\tilde{\delta}_\infty $. 
\end{lemm}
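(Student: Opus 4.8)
The plan is to rewrite $\tilde{\delta}_s$ as a sum of terms each of which converges almost surely. Recalling that $D_s = \int_0^s \dot{D}_u\, du$ and that $\tilde{\delta}_s = \tilde{\delta}_0 + \int_0^s \dot{D}_u v_u\, du - A(t_s)$, I would split
\begin{equation*}
\tilde{\delta}_s = \tilde{\delta}_0 - \int_0^s \dot{D}_u (1 - v_u)\, du - \big( A(t_s) - D_s \big),
\end{equation*}
which simply uses $\int_0^s \dot{D}_u v_u\, du = D_s - \int_0^s \dot{D}_u(1-v_u)\,du$. The convergence of $\tilde{\delta}_s$ thus reduces to the convergence of the two remaining terms on the right-hand side.

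The term $A(t_s) - D_s$ converges almost surely by Lemma 4.6 of \cite{angst2016}, exactly as already invoked in the spherical case above. It therefore only remains to prove that the integral $\int_0^s \dot{D}_u(1-v_u)\,du$ admits a finite limit as $s \to +\infty$. Since $v_u \in [-1,1]$, the integrand is non-negative and this integral is non-decreasing in $s$; hence it suffices to rule out divergence to $+\infty$, i.e. to show that $\int_0^{\infty} \dot{D}_u(1-v_u)\,du < +\infty$ almost surely.

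For this I would reuse the by-product of the proof of Lemma \ref{lem.v}, namely that $\int_0^{\infty} \dot{D}_u(1-v_u^2)\,du < +\infty$ almost surely (there it was noted that $s \mapsto \int_0^s \dot{D}_u(1-v_u^2)\,du$ is non-decreasing and bounded). Since, again by Lemma \ref{lem.v}, $v_u \to 1$ almost surely, there is a random time $u_0$ beyond which $v_u \geq 0$, so that $1 + v_u \geq 1$ and consequently $1 - v_u \leq (1-v_u)(1+v_u) = 1 - v_u^2$. The tail $\int_{u_0}^{\infty} \dot{D}_u(1-v_u)\,du$ is therefore dominated by the convergent integral $\int_{u_0}^{\infty}\dot{D}_u(1-v_u^2)\,du$, while the initial segment $\int_0^{u_0}\dot{D}_u(1-v_u)\,du \leq 2\, D_{u_0}$ is finite because $D_{u_0} < +\infty$. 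This yields the desired finiteness, hence the almost sure convergence of $\tilde{\delta}_s$ to some random $\tilde{\delta}_\infty$.

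The main obstacle is this last step: transferring integrability from the weight $1 - v_u^2$ to the weight $1 - v_u$. It works precisely because $v_u \to 1$, which forces the factor $1 + v_u$ to be eventually bounded below by a positive constant; this is exactly the content of Lemma \ref{lem.v}, so the crux of the argument is to have that lemma available, everything else being elementary bookkeeping on the monotone integral.
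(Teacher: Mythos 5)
Your proposal is correct and follows essentially the same route as the paper: the same decomposition of $\tilde{\delta}_s$ into $\int_0^s \dot{D}_u v_u\,du - D_s$ (i.e. $-\int_0^s \dot{D}_u(1-v_u)\,du$) plus $D_s - A(t_s)$, with the first term handled via the convergence of $\int_0^s \dot{D}_u(1-v_u^2)\,du$ from the proof of Lemma \ref{lem.v} together with $v_s \to 1$, and the second by Lemma 4.6 of \cite{angst2016}. The only difference is that you spell out explicitly the domination $1-v_u \leq 1-v_u^2$ once $v_u \geq 0$, a step the paper leaves implicit.
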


\begin{proof}
In the proof of Lemma \ref{lem.v}, we have already shown that the integral $\int_0^s \dot{D}_u (1- v_u^2) du$ is almost surely convergent as $s$ goes to infinity. Since $|v_s|\leq 1$, and since $v_s$ tends to one, this implies that $ \int_0^s \dot{D}_u v_u du - D_s$ also converges almost surely. As already used in the proof of the spherical case, the Lemma is proven invoking Lemma 4.6 in \cite{angst2016} where it is shown that $D_s -  A(t_s)$ converges almost surely.
\end{proof}

We now go back to the Iwaswa decomposition of the process $g_s$ and concentrate on the nilpotent component. 

\begin{lemm}
The $N$-valued process $(n_s)_s$ converges almost surely to some asymptotic random variable $n_\infty \in N$. 
\end{lemm}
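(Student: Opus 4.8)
The plan is to exploit the fact that, by the displayed system for the Iwasawa components, $(n_s)_{s\ge0}$ is a $C^1$ (hence bounded-variation) path in $N$, driven only by $ds$. Exactly as in the proof of the first point of Proposition~\ref{prop.predeviss}, it then suffices to show that the total length of this path, measured in a fixed left-invariant Riemannian metric $\Vert\cdot\Vert$ on $N$, is almost surely finite; completeness of $N$ forces $n_s$ to converge to some $n_\infty\in N$. (Here $N$ is in fact abelian, isomorphic to $\R^{d-1}$, so this is just finiteness of a Euclidean line integral.)

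First I would read off the speed from the dynamics of $n_s$. Setting
\[
\phi(s):=\exp\left(-\int_0^s \dot{D}_r\,(e_1^*k_ru_re_1)\,dr\right)=\exp\left(-\int_0^s \dot{D}_r\, v_r\,dr\right),
\]
and recalling that $\Theta_s=k_su_se_1\in\mathbb{S}^{d-1}$, so that $e_i^*k_su_se_1=\Theta_s^i$, the equation for $n_s$ yields
\[
\Vert \dot n_s\Vert = \dot{D}_s\,\phi(s)\,\left\Vert \sum_{i=2}^d \Theta_s^i\,(H_i+V_i)\right\Vert \le C\,\dot{D}_s\,\phi(s),
\]
where $C:=\sup_{\omega\in\mathbb{S}^{d-1}}\Vert\sum_{i=2}^d \omega^i(H_i+V_i)\Vert<+\infty$ since the $(H_i+V_i)$ are fixed elements of $\mathrm{Lie}(G)$ and $\Theta_s$ lives on the unit sphere. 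Everything thus reduces to proving $\int_0^\infty \dot{D}_s\,\phi(s)\,ds<+\infty$ almost surely.

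The key observation is that $\phi$ solves $\phi'(s)=-\dot{D}_s v_s\,\phi(s)$, so $\dot{D}_s v_s\,\phi(s)=-\phi'(s)$ is an exact derivative and
\[
\int_0^\infty \dot{D}_s\, v_s\,\phi(s)\,ds = \phi(0)-\lim_{s\to\infty}\phi(s)=1,
\]
where $\phi(\infty)=0$ because $v_s\to1$ by Lemma~\ref{lem.v} and $D_\infty=+\infty$ almost surely, which makes $\int_0^\infty \dot{D}_r v_r\,dr$ diverge. To pass from $\dot{D}_s v_s\phi(s)$ to $\dot{D}_s\phi(s)$ I would use Lemma~\ref{lem.v} once more: almost surely there is a finite random time $S_0$ with $v_s\ge\tfrac12$ for all $s\ge S_0$. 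Splitting the integral, on $[0,S_0]$ one bounds $\phi\le1$ and uses $\int_0^{S_0}\dot{D}_s\,ds=D_{S_0}<+\infty$, while on $[S_0,\infty)$ one has $\dot{D}_s\phi(s)\le 2\,\dot{D}_s v_s\phi(s)$, whose integral is at most $2$. Hence $\int_0^\infty\dot{D}_s\phi(s)\,ds<+\infty$, the path $(n_s)$ has finite length, and $n_s\to n_\infty$ almost surely.

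The only genuinely delicate point is this finiteness of length: since $D_\infty=+\infty$, the factor $\dot{D}_s$ is \emph{not} integrable on its own, so the argument really hinges on the exponential damping $\phi(s)$, and on the fact that $v_s\to1$ (Lemma~\ref{lem.v}) is strong enough both to kill $\phi$ at infinity and to license the comparison $\dot{D}_s\phi(s)\le 2\,\dot{D}_s v_s\phi(s)$ eventually. This is precisely where the infinite-horizon hypothesis $I(\alpha)=+\infty$ feeds into the convergence of the nilpotent component.
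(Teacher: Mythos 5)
Your proof is correct, but the key estimate is obtained by a genuinely different route than the paper's. Both arguments share the same frame (the one you borrow from Proposition \ref{prop.predeviss}): bound the speed $\Vert\dot n_s\Vert$ by a constant times $\dot D_s\phi(s)$, show the total length $\int_0^{+\infty}\Vert\dot n_u\Vert\,du$ is a.s.\ finite, and conclude by completeness of $N$. The difference is how finiteness of $\int_0^\infty \dot D_s\phi(s)\,ds$ is established. The paper's proof is quantitative: it invokes Proposition \ref{prop.asympdott} to get the a.s.\ exponential growth rate $\dot{D}_s=\exp\big(\sigma^2\tfrac{d-1}{2}\tfrac{1-c}{1+c}\,s+o(s)\big)$, so that (using $v_s\to1$) the exponent $\int_0^s\dot D_r v_r\,dr$ grows exponentially in $s$ and the integrand $\dot D_s\phi(s)$ decays double-exponentially. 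Your telescoping observation --- $\dot D_s v_s\phi(s)=-\phi'(s)$ is an exact derivative, so its integral over any interval where $v\ge\tfrac12$ is at most $\phi(S_0)$, whence $\int_{S_0}^{\infty}\dot D_s\phi(s)\,ds\le 2\phi(S_0)<\infty$ --- removes this quantitative input entirely: you use only Lemma \ref{lem.v} (indeed only $\liminf_s v_s>0$) and the finiteness of $D_{S_0}$ at the a.s.\ finite time $S_0$. This makes your argument more elementary and more robust (it survives any growth regime of $\dot D_s$, so it does not depend on the polynomial-growth hypothesis feeding into Proposition \ref{prop.asympdott}), whereas the paper's version is shorter given that Proposition \ref{prop.asympdott} is already in hand and yields, as a byproduct, a double-exponential rate for the convergence $n_s\to n_\infty$. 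Two cosmetic corrections to your write-up: since $v_r\in[-1,1]$ may be negative for small $r$, you may not claim $\phi\le1$ on $[0,S_0]$, nor that the tail integral is at most $2$; but neither matters, as on the compact interval $[0,S_0]$ the continuous integrand $\dot D_s\phi(s)$ has a.s.\ finite integral automatically (or bound $\phi\le e^{D_{S_0}}$ there), and the tail is bounded by $2\phi(S_0)<\infty$, which is all you need.
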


\begin{proof}
By Proposition \ref{prop.asympdott}, we have almost surely $\dot{D}_s = \exp \big ( \sigma^2 \frac{d-1}{2}\frac{1-c}{1+c} \, s + o(s) \big)$,  with $c\in (0,1)$. Thus $\int_0^{+\infty}\dot{D}_s e^{- \int_{0}^{s} \dot{D}_r v_r dr}$ is finite almost surely and, from the stochastic differential equation satisfied by $n_s$, one deduces that $\int_0^{+\infty} \vert \dot{n}_u \vert du <+\infty$. By completness of $N$, it follows that $n_s$ converges almost surely. 
\end{proof}

We can finally describe the Poisson boundary of the relativistic diffusion using iteratively the d\'evissage method. Let us remark that there is a bijective map between the relativistic diffusion $(\dot{t}_s, t_s, \dot{x}_s/\vert \dot{x}_s \vert , x_s)$ and  the process $(\dot{t}_s, t_s, \Theta_s, \tilde{\delta}_s, n_s)$, so that 
\[
\mathrm{Inv} ((t_s, \dot{t}_s, \dot{x}_s/\vert \dot{x}_s \vert, x_s)_{s\geq 0})  = \mathrm{Inv} ((t_s, \dot{t}_s, \Theta_s, \tilde{\delta}_s, n_s)_{s\geq 0}).
\]
So let us first consider the subdiffusion $(t_s, \dot{t}_s, \Theta_s)_{s \geq 0}$.
Using Proposition \ref{pro.temptrivial} and Theorem \ref{theo.homo} with $X_s=(\dot{t}_s, t_s)$ and $Y_s=\Theta_s \in \mathbb S^{d-1}$, the dynamics of $(X_s,Y_s)$ being equivariant under the action $\mathbb S^{d-1}$ by rotation,  we get that almost surely 
\[
\mathrm{Inv} ((t_s, \dot{t}_s, \Theta_s)_{s\geq 0}) = \mathrm{Inv} ((t_s, \dot{t}_s)_{s\geq 0}) \vee \sigma(\Theta_{\infty}) = \sigma(\Theta_{\infty}) .
\]
But since $\Theta_{\infty}=e_1$ is deterministic, we get in fact that $\mathrm{Inv} ((t_s, \dot{t}_s, \Theta_s)_{s\geq 0}) $ is trivial. Now consider the larger subdiffusion $(\dot{t}_s, t_s, \Theta_s, \tilde{\delta}_s)_{s \geq 0}$. Again, its dynamics is equivariant under the action of $(\mathbb R,+)$ by translation on the variable $\tilde{\delta}$. Setting this time $X_s=(t_s, \dot{t}_s, \Theta_s)$ and $Y_s=\tilde{\delta}_s \in \mathbb R$, and applying Theorem \ref{theo.trivial}, we get that almost surely 
\[
\mathrm{Inv} ((t_s, \dot{t}_s, \Theta_s, \tilde{\delta}_s)_{s\geq 0}) = \mathrm{Inv} ((t_s, \dot{t}_s, \Theta_s)_{s\geq 0}) \vee \sigma(\tilde{\delta}_{\infty}) = \sigma(\tilde{\delta}_{\infty}).
\]
Last, we can consider the full diffusion  $(\dot{t}_s, t_s, \Theta_s, \tilde{\delta}_s, n_s)_{s \geq 0}$. 
Setting now $X_s:=(\dot{t}_s, t_s, \Theta_s, \tilde{\delta}_s)$ and $Y_s=n_s$, the global dynamics being equivariant under the action of $N$, a last application of Theorem \ref{theo.trivial} yields almost surely
\[
\mathrm{Inv} ((t_s, \dot{t}_s, \Theta_s, \tilde{\delta}_s,n_s)_{s\geq 0}) = \mathrm{Inv} ((t_s, \dot{t}_s, \Theta_s, \tilde{\delta}_s)_{s\geq 0}) \vee \sigma(n_{\infty}) = \sigma(\tilde{\delta}_{\infty},n_{\infty}).
\]
To conclude the proof of Theorem \ref{theo.infhorHyp}, we need to verify that the limit random variable $(\tilde{\delta}_\infty, n_\infty)$ is a measurable function of $(\delta_{\infty}, \theta_\infty)$ appearing in the statement. Recall that by definition, 
\[
\delta_{\infty}=\lim_{s \to \infty} \delta_s, \; \text{where} \; \delta_s:= 1+ \mathrm{sinh}^{-1}(\vert \vec{x}_s \vert) - \int_{t_0}^{t_s} \frac{du}{\alpha(u)}, \;\; \text{and} \; \;
\theta_\infty := \lim_{s \to +\infty} \vec{x}_s / \vert \vec{x}_s \vert
\]
where $\vec{x}_s:= \sum_{i=1}^{d} x^{i} e_i$ is the $\R^d$ part (or spatial) part of $x_s \in \mathbb R^{1,d}$ and $|\vec{x}_s|$ is its Euclidean norm. Then, one can check that 
\[
\theta_\infty = n_{\infty}(e_0+e_1)/ (e_0^{*} n_{\infty}(e_0+e_1) ).
\] 
The map $n \in N \mapsto n(e_0+e_1)/ (e_0^{*} n(e_0+e_1))$ is a bijection from $N$ onto the set of point of the form $e_0 + \vec{x}$, $\vert \vec{x} \vert =1$ and $\vec{x}\neq -e_1$. This ensure that $n_\infty$ is a function of $\theta_\infty$. Moreover, we have 
\[
\tilde{\delta}_s = \int_0^{s} \dot{D}_u v_u du - \mathrm{sinh}^{-1}(\vert \vec{x}_s \vert) + \delta_s\] 
and one can easily check that $ \int_0^{s} \dot{D}_u v_u du - \mathrm{sinh}^{-1}(\vert \vec{x}_s \vert) $ converges almost surely to an asymptotic random variable which depends only of $\theta_\infty$ (this is because one passes from Iwasawa to polar coordinates in hyperbolic space). Thus $\tilde{\delta}_\infty$ is a measurable function of $(\theta_\infty, \delta_{\infty})$, and the invariant sigma field of the full relativistic diffusion is indeed generated by $(\theta_\infty, \delta_{\infty})$.


\begin{thebibliography}{{Pin}08}

\bibitem[ABT15]{kinetic}
J\"urgen Angst, Isma\"el Bailleul, and Camille Tardif.
\newblock Kinetic {B}rownian motion on {R}iemannian manifolds.
\newblock {\em Electron. J. Probab.}, 20:no. 110, 40, 2015.

\bibitem[AnF07]{flores}
V.~Ala\~na and J.L. Flores.
\newblock The causal boundary of product spacetimes.
\newblock {\em Gen. Relativ. Gravitation}, 39(10):1697--1718, 2007.

\bibitem[Ang09]{mathese}
J{\"u}rgen Angst.
\newblock {\'E}tude de diffusions \`a valeurs dans des vari\'et\'es
  lorenziennes.
\newblock {\em Th\`ese de l'universit\'e de Strasbourg},
  http://tel.archives-ouvertes.fr/tel-00418842/fr/, 2009.

\bibitem[Ang15]{angst1}
J\"urgen Angst.
\newblock Poisson boundary of a relativistic diffusion in curved space-times:
  an example.
\newblock {\em ESAIM Probab. Stat.}, 19:502--514, 2015.

\bibitem[Ang16]{angst2016}
Jürgen Angst.
\newblock Asymptotic behavior of a relativistic diffusion in
  {R}obertson–{W}alker space–times.
\newblock {\em Ann. Inst. H. Poincaré Probab. Statist.}, 52(1):376--411, 02
  2016.

\bibitem[AT16]{devissage}
J\"urgen Angst and Camille Tardif.
\newblock D\'evissage of a {P}oisson boundary under equivariance and regularity
  conditions.
\newblock In {\em S\'eminaire de {P}robabilit\'es {XLVIII}}, volume 2168 of
  {\em Lecture Notes in Math.}, pages 199--229. Springer, Cham, 2016.

\bibitem[Bab06]{babillot}
M.~Babillot.
\newblock An introduction to {P}oisson boundaries of {L}ie groups.
\newblock In {\em Probability measures on groups: recent directions and
  trends}, pages 1--90. Tata Inst. Fund. Res., Mumbai, 2006.

\bibitem[Bai08]{ismael}
Ismael Bailleul.
\newblock Poisson boundary of a relativistic diffusion.
\newblock {\em Probab. Theory Related Fields}, 141(1-2):283--329, 2008.

\bibitem[Bai10]{ismaelRD}
Isma{\"e}l Bailleul.
\newblock A stochastic approach to relativistic diffusions.
\newblock {\em Ann. Inst. Henri Poincar\'e Probab. Stat.}, 46(3):760--795,
  2010.

\bibitem[BR10]{bailleul_raugi}
Ismael Bailleul and Albert Raugi.
\newblock Where does randomness lead in spacetime?
\newblock {\em ESAIM Probab. Stat.}, 14:16--52, 2010.

\bibitem[CD08]{debbaschRD}
C.~Chevalier and F.~Debbasch.
\newblock Relativistic diffusions: a unifying approach.
\newblock {\em J. Math. Phys.}, 49(4):043303, 19, 2008.

\bibitem[CW00]{cranston}
M.~Cranston and Feng-Yu Wang.
\newblock A condition for the equivalence of coupling and shift coupling.
\newblock {\em Ann. Probab.}, 28(4):1666--1679, 10 2000.

\bibitem[Deb04]{deb}
F.~Debbasch.
\newblock A diffusion process in curved space-time.
\newblock {\em J. Math. Phys.}, 45(7):2744--2760, 2004.

\bibitem[DH09]{dunkel}
J{\"o}rn Dunkel and Peter H{\"a}nggi.
\newblock Relativistic {B}rownian motion.
\newblock {\em Phys. Rep.}, 471(1):1--73, 2009.

\bibitem[DMR97]{dmr}
F.~Debbasch, K.~Mallick, and J.P. Rivet.
\newblock {R}elativistic {O}rnstein-{U}hlenbeck {P}rocess.
\newblock {\em J. Stat. Phys.}, 88:945, 1997.

\bibitem[Dud66]{dudley1}
R.~M. Dudley.
\newblock Lorentz-invariant {M}arkov processes in relativistic phase space.
\newblock {\em Ark. Mat.}, 6:241--268, 1966.

\bibitem[Dud67]{dudley2}
R.~M. Dudley.
\newblock {A} note on {L}orentz-invariant {M}arkov processes.
\newblock {\em Ark. Mat.}, 6:575--581 (1967), 1967.

\bibitem[Dud73]{dudley3}
R.~M. Dudley.
\newblock {A}symptotics of some relativistic {M}arkov processes.
\newblock {\em Proc. Nat. Acad. Sci. U.S.A.}, 70:3551--3555, 1973.

\bibitem[FLJ07]{flj}
Jacques Franchi and Yves Le~Jan.
\newblock Relativistic diffusions and {S}chwarzschild geometry.
\newblock {\em Comm. Pure Appl. Math.}, 60(2):187--251, 2007.

\bibitem[FLJ11]{flj2}
Jacques Franchi and Yves Le~Jan.
\newblock Curvature diffusions in general relativity.
\newblock {\em Comm. Math. Phys.}, 307(2):351--382, 2011.

\bibitem[Fra05]{frances2}
Charles Frances.
\newblock The conformal boundary of anti-de {S}itter space-times.
\newblock In {\em Ad{S}/{CFT} correspondence: {E}instein metrics and their
  conformal boundaries}, volume~8 of {\em IRMA Lect. Math. Theor. Phys.}, pages
  205--216. Eur. Math. Soc., Z\"{u}rich, 2005.

\bibitem[Fra08]{frances}
Charles Frances.
\newblock Essential conformal structures in {R}iemannian and {L}orentzian
  geometry.
\newblock In {\em Recent developments in pseudo-{R}iemannian geometry}, ESI
  Lect. Math. Phys., pages 231--260. Eur. Math. Soc., Z\"{u}rich, 2008.

\bibitem[Fra09]{franchigoedel}
Jacques Franchi.
\newblock Relativistic diffusion in {G}\"odel's universe.
\newblock {\em Comm. Math. Phys.}, 290(2):523--555, 2009.

\bibitem[FS00]{floressanchez}
Jos{\'e}~Luis Flores and Miguel S{\'a}nchez.
\newblock Geodesic connectedness and conjugate points in {GRW} space-times.
\newblock {\em J. Geom. Phys.}, 36(3-4):285--314, 2000.

\bibitem[Fur63]{harry1}
Harry Furstenberg.
\newblock A {P}oisson formula for semi-simple {L}ie groups.
\newblock {\em Ann. of Math. (2)}, 77:335--386, 1963.

\bibitem[Fur73]{harry}
Harry Furstenberg.
\newblock Boundary theory and stochastic processes on homogeneous spaces.
\newblock In {\em Harmonic analysis on homogeneous spaces ({P}roc. {S}ympos.
  {P}ure {M}ath., {V}ol. {XXVI}, {W}illiams {C}oll., {W}illiamstown, {M}ass.,
  1972)}, pages 193--229. Amer. Math. Soc., Providence, R.I., 1973.

\bibitem[GKP72]{geroch}
R.~Geroch, E.~H. Kronheimer, and R.~Penrose.
\newblock Ideal points in space-time.
\newblock {\em Royal Society of London Proceedings Series A}, 327:545--567,
  1972.

\bibitem[GR85]{guivarc'h_raugi}
Y.~Guivarc'h and A.~Raugi.
\newblock Fronti\`ere de {F}urstenberg, propri\'et\'es de contraction et
  th\'eor\`emes de convergence.
\newblock {\em Z. Wahrsch. Verw. Gebiete}, 69(2):187--242, 1985.

\bibitem[HE73]{hawell}
Stephen~W. Hawking and G.F.R. Ellis.
\newblock {\em The large scale structure of space-time.}
\newblock Cambridge Monographs of Mathematical Physics. Vol. I. London:
  Cambridge University Press., 1973.

\bibitem[Lia04]{liao}
Ming Liao.
\newblock {\em L\'evy processes in {L}ie groups}, volume 162 of {\em Cambridge
  Tracts in Mathematics}.
\newblock Cambridge University Press, Cambridge, 2004.

\bibitem[Pen65]{penrose}
Roger Penrose.
\newblock Gravitational collapse and space-time singularities.
\newblock {\em Phys. Rev. Lett.}, 14:57--59, 1965.

\bibitem[{Pin}08]{pinsky}
Ross~G. {Pinsky}.
\newblock {\em {Positive harmonic functions and diffusion: an integrated
  analytic and probabilistic approach.}}
\newblock Cambridge University Press, 2008.

\bibitem[Rau77]{raugi77}
Albert Raugi.
\newblock Fonctions harmoniques sur les groupes localement compacts \`a base
  d\'enombrable.
\newblock {\em Bull. Soc. Math. France M\'em.}, (54):5--118, 1977.

\bibitem[RY99]{revuz}
D.~Revuz and M.~Yor.
\newblock {\em {C}ontinuous {M}artingales and {B}rownian {M}otion}.
\newblock Springer-Verlag, third edition, 1999.

\bibitem[Tar12]{camille}
Camille Tardif.
\newblock Etude infinit{é}simale et asymptotique de certains flots
  stochastiques relativistes.
\newblock {\em Th\`ese de l'{U}niversit\'e de {S}trasbourg}, 2012.

\bibitem[Vir70]{vircer}
A.~D. Vircer.
\newblock A central limit theorem for semisimple {L}ie groups.
\newblock {\em Teor. Verojatnost. i Primenen.}, 15:685--704, 1970.

\bibitem[Zeg99]{ghani1}
A.~Zeghib.
\newblock Isometry groups and geodesic foliations of {L}orentz manifolds. {II}.
  {G}eometry of analytic {L}orentz manifolds with large isometry groups.
\newblock {\em Geom. Funct. Anal.}, 9(4):823--854, 1999.

\end{thebibliography}
\end{document}